\newtheorem{thm}{Theorem}[section]
\newtheorem{prop}[thm]{Proposition} 
\newtheorem{lem}[thm]{Lemma}
\newtheorem{cor}[thm]{Corollary} 
\newtheorem*{claim}{Claim}
\theoremstyle{remark}
\newtheorem{rem}{Remark}[section]
\theoremstyle{definition}
\newtheorem{defn}[thm]{Definition}
\newtheorem*{notation}{Notation}
\newtheorem{coro}{Corollary}[section]
\theoremstyle{definition}
\newtheorem*{nota}{Notation}
\theoremstyle{remark}
\DeclareMathOperator{\depth}{depth}
\newcommand{\noprint}[1]{\relax}
\title{Local Ramsey theory. An abstract approach}
\author{Carlos Di Prisco, Jos\'e G. Mijares and Jes\'us Nieto}
\begin{document}

\maketitle

\begin{abstract}
Given a topological Ramsey space $(\mathcal R,\leq, r)$, we extend the notion of semiselective coideal to sets $\mathcal H\subseteq\mathcal R$ and study conditions for $\mathcal H$ that will enable us to make the structure $(\mathcal R,\mathcal H,\leq, r)$ a Ramsey space (not necessarily topological) and also study forcing notions related to $\mathcal H$ which will satisfy abstract versions of interesting properties of the corres-\linebreak ponding forcing notions in the realm of Ellentuck's space (see \cite{ellen,mathias}). This extends results from \cite{farah,Mijares} to the most general context of topological Ramsey spaces. As applications, we prove that for every topological Ramsey space $\mathcal R$, under suitable large cardinal hypotheses every semiselective ultrafilter $\mathcal U\subseteq\mathcal R$ is generic over $L(\mathbb R)$; and that given a semiselective coideal $\mathcal H\subseteq\mathcal R$, every definable subset of $\mathcal R$ is $\mathcal H$--Ramsey. This generalizes the corresponding results for the case when $\mathcal R$ is equal to Ellentuck's space (see \cite{farah,DMU}).
\end{abstract}

\section{Introduction}

Let $A\subseteq\mathbb{N}$ be given and consider the set $A^{[\infty]} =
\{X\subset A: |X| = \infty\}$. Consider the sets of the form $[a,A] = \{B\in\mathbb{N}^{[\infty]} : a\sqsubset B\subseteq A\},$ where $a$ is a finite $\mathbb{N}^{[\infty]}$,
$A\in\mathbb{N}^{[\infty]}$ and $a\sqsubset B$ means that $a$ is
an initial segment of $B$. For a family $\mathcal{H}\subseteq\mathbb{N}^{[\infty]}$, a set
$\mathcal{X}\subseteq \mathbb{N}^{[\infty]}$ is said to be
$\mathcal{H}$-{\it Ramsey} \ if for every nonempty $[a,A]$ with
$A\in\mathcal{H}$ there exists $B\in\mathcal{H}\cap A^{[\infty]}$
such that $[a,B]\subseteq\mathcal{X}$ or $[a,B]\cap\mathcal{X} =
\emptyset$. $\mathcal{X}$ is said to be $\mathcal{H}$-{\it Ramsey
null} \ if for every nonempty $[a,A]$ with $A\in\mathcal{H}$ there
exists $B\in\mathcal{H}\cap [a,A]$ such that
$[a,B]\cap\mathcal{X} = \emptyset$.

Local Ramsey theory includes the study and characterization of the
property defined above, which is a relativized version of the
completely Ramsey property (see \cite{galpri,ellen}). In \cite{mathias},
Mathias introduces the happy families (or selective
coideals) of subsets of $\mathbb{N}$ and relativizes the notion of
completely Ramsey subsets of $\mathbb{N}^{[\infty]}$ to such families. Then he proves that
analytic sets are $\mathcal{U}$-Ramsey when $\mathcal{U}$ is a
Ramsey ultrafilter and generalizes this result for arbitrary happy
families. In \cite{farah}, Farah gives an answer to the question
of Todorcevic: what are the combinatorial properties of the family
$\mathcal{H}$ of ground model subsets of $\mathbb{N}$ which
warranties diagonalization of the Borel partitions? This is done
by imposing a condition on $\mathcal{H}$ which is weaker than
selectivity: the notion of semiselectivity. Farah shows not only that the semiselectivity of $\mathcal{H}$ is enough to make the $\mathcal{H}$-Ramsey property equivalent to the abstract Baire property with respect
to $\mathcal{H}$, but also shows that this latter equivalence characterizes semiselectivity.  In \cite{Mijares}, a step toward the understanding of the local Ramsey property within the most general context of topological Ramsey spaces is done.

In this work, given a topological Ramsey space $(\mathcal R,\leq, r)$, we extend the notion of semiselective coideal to sets $\mathcal H\subseteq\mathcal R$ and study conditions for $\mathcal H$ that will enable us to make the structure $(\mathcal R,\mathcal H,\leq, r)$ a Ramsey space (not necessarily topological) and also study forcing notions related to $\mathcal H$ which will satisfy abstract versions of interesting properties of the corresponding forcing notions in the realm of Ellentuck's space, extending results from \cite{farah,Mijares} to the most general context of topological Ramsey spaces. As applications, we prove that for every topological Ramsey space $\mathcal R$, under suitable large cardinal hypotheses every semiselective ultrafilter $\mathcal U\subseteq\mathcal R$ is generic over $L(\mathbb R)$; and that given a semiselective coideal $\mathcal H\subseteq\mathcal R$, every definable subset of $\mathcal R$ is $\mathcal H$--Ramsey. This generalizes the corresponding results for the case when $\mathcal R$ is equal to Ellentuck's space (see \cite{farah,DMU}).

The structure of this work is as follows: Section
\ref{def ramsey spaces} is a short introduction to the theory of topological Ramsey spaces. In Section \ref{semisele}, we extend the notion of semiselective coideal to subsets $\mathcal H$ of a topological Ramsey space $\mathcal R$ and study conditions for $\mathcal H$ that will enable us to make the structure $(\mathcal R,\mathcal H,\leq, r)$ a Ramsey space. In particular, in this section we study the characterization of the corresponding abstract version of the $\mathcal H$--Ramsey property. In section \ref{souslinop}, it is shown that the
family of $\mathcal H$--Ramsey subsets of $\mathcal R$ is closed under
the Souslin operation, if $\mathcal H$ is semiselective. In Section \ref{sele} we introduce an abstract version of selective coideal. This is then connected with Section \ref{semisele forcing} where we study forcing notions related to semiselectivity as defined in Section \ref{semisele}. Finally, in Section \ref{app} we apply the main results in the previous Sections to prove that under suitable large cardinal hypotheses every semiselective ultrafilter $\mathcal U\subseteq\mathcal R$ is generic over $L(\mathbb R)$ and that given a semiselective coideal $\mathcal H\subseteq\mathcal R$, every definable subset of $\mathcal R$ is $\mathcal H$--Ramsey. 

\section{Topological Ramsey spaces}\label{def ramsey spaces}
The definitions and results throughout this section are
taken from \cite{Todo}.  A previous presentation can be found in \cite{carsimp}. 

\subsection{Metrically closed spaces and approximations}\label{ MC spaces} 
Consider a triplet of the form $(\mathcal{R}, \leq,
r )$, where $\mathcal{R}$ is a set, $\leq$
is a quasi order on $\mathcal{R}$ and $r: \mathbb{N}\times\mathcal{R}\rightarrow \mathcal{AR}$
 is a function with range $\mathcal{AR}$. For every $n\in \mathbb{N}$ and every $A\in \mathcal{R}$, 
 let us write 
\begin{equation}\label{eq axiomatic approximations}
      r_n(A) := r(n,A)
\end{equation}
We say that $r_{n}(A)$ is {\bf the} $n$th {\bf approximation of} $A$. 
We will reserve capital letters $A,B\dots$ for elements in $\mathcal{R}$ while lowercase letters $a,b\dots$
will denote elements of $\mathcal{AR}$.  In order to capture the combinatorial structure
 required to ensure the provability of an Ellentuck type Theorem, some assumptions on
 $(\mathcal{R}, \leq, r)$ will be imposed. The first is the following:\\[2mm]
\noindent \textbf{(A.1)}\ [{\bf Metrization}] 
\begin{itemize}
	\item[{(A.1.1)}]For any $A\in \mathcal{R}$, $r_{0}(A) = \emptyset$.
	\item[{(A.1.2)}]For any $A,B\in \mathcal{R}$, if $A\neq B$ then
	$(\exists n)\ (r_{n}(A)\neq r_{n}(B))$.
	\item[{(A.1.3)}]If $r_{n}(A) =r_{m}(B)$ then $n = m$ and $(\forall i<n)\ (r_{i}(A) = r_{i}(B))$.
\end{itemize}
Take the discrete topology on $\mathcal{AR}$ and endow 
$\mathcal{AR}^{\mathbb{N}}$ with the product topology; this is the metric space of all the sequences of 
elements of $\mathcal{AR}$. The set $\mathcal{R}$ can be identified with the corresponding image 
in $\mathcal{AR}^{\mathbb{N}}$. We will say that $\mathcal{R}$ is
{\bf metrically closed} if, as a subspace $\mathcal{AR}^{\mathbb{N}}$ with the
inherited topology, it is closed. The basic open sets generating the metric topology on $\mathcal{R}$ inherited from the product topology of $\mathcal{AR}^{\mathbb{N}}$ are of the form:
\begin{equation}\label{eq basic opens in ARN}
		[a] = \{B\in \mathcal{R} : (\exists n)(a =
	r_{n}(B))\}
\end{equation}
where $a\in\mathcal{AR}$.  Let us define the {\bf length} of $a$,
as the unique integer $|a|=n$ such that $a = r_{n}(A)$ for some $A\in
\mathcal{R}$. For every $n\in\mathbb N$, let 
\begin{equation}
    \mathcal{AR}_n : = \{a\in\mathcal{AR} : |a|=n\} 
\end{equation}
Hence, 
\begin{equation}
    \mathcal{AR} = \bigcup_{n\in\mathbb N} \mathcal{AR}_n
\end{equation}

The {\bf Ellentuck type neighborhoods} are of the form:
\begin{equation}\label{eq basic Ellentuck opens}
		[a,A] = \{B\in [a] : B\leq A\}=
		\{B\in \mathcal {R} : (\exists n)\ a=r_n(B) \ \&\ B\leq A\}
\end{equation}
where $a\in\mathcal{AR}$
and $A\in \mathcal{R}$. 

\medskip

We will use the symbol $[n,A]$ to abbreviate $[r_{n}(A),A]$. 

\medskip

Let 
\begin{equation}
    \mathcal{AR}\!\!\upharpoonright \!\! A = \{a\in\mathcal{AR} : [a,A]\neq\emptyset\}
\end{equation}
Given a neighborhood $[a,A]$ and $n\geq |a|$, let $r_n[a,A]$ be the image of $[a,A]$ by the function $r_n$, i.e., 
\begin{equation}\label{eq image of basic Ellentuck opens}
		r_n[a,A] =
		\{r_n(B) : B\in [a,A] \}
\end{equation}

\subsection{Ramsey sets}\label{ssection Ramsey sets}
A set $\mathcal{X}\subseteq \mathcal{R}$
is \textbf{Ramsey} if for every neighborhood $[a,A]\neq\emptyset$
there exists  $B\in [a,A]$ such that $[a,B]\subseteq \mathcal{X}$
or $[a,B]\cap \mathcal{X} = \emptyset$. A set
$\mathcal{X}\subseteq \mathcal{R}$ is \textbf{Ramsey null} if for
every neighborhood $[a,A]$ there exists  $B\in [a,A]$ such that
$[a,B]\cap \mathcal{X} = \emptyset$.

\subsection{Topological Ramsey spaces}\label{ssection Ramsey spaces}
We say that $(\mathcal{R}, \leq,r)$ is a \textbf{topological\linebreak Ramsey space} iff subsets of
$\mathcal{R}$ with the Baire property are Ramsey and\linebreak meager subsets of $\mathcal{R}$ are Ramsey null.

\medskip

Given $a,b\in\mathcal{AR}$, write 

\begin{equation}
a\sqsubseteq b\ \mbox{ iff } (\exists A\in\mathcal R)\ (\exists m,n\in\mathbb N)\  m\leq n, a=r_m(A)\mbox{ and } b= r_n(A).
\end{equation}

\medskip

By A.1, $\sqsubseteq$ can be proven to be a partial order on $\mathcal{AR}$.

\medskip

\noindent \textbf{(A.2)}\ [{\bf Finitization}] There is a quasi order $\leq_{fin}$ on
$\mathcal{AR}$ such that:
\begin{itemize}
    \item[(A.2.1)] $A\leq B$ iff
    $(\forall n)\ (\exists m) \ \ (r_{n}(A)\leq_{fin} r_{m}(B))$.
    \item[(A.2.2)] $\{b\in \mathcal{AR} : b\leq_{fin} a\}$ is finite, for every
    $a\in \mathcal{AR}$.
\item[(A.2.3)] If $a\leq_{fin} b$ and $c \sqsubseteq a$ then there is $d \sqsubseteq b$ such that $c \leq_{fin} d$.
\end{itemize}

\medskip

Given $A\in\mathcal{R}$ and $a\in\mathcal{AR}\!\!\upharpoonright \!\! A$,
we define the {\bf depth} of $a$ in $A$ as 
\begin{equation}\label{eq depth of a segment}
		\operatorname{depth}_{A}(a): = \min\{n :a\leq_{fin}r_n(A)\}
\end{equation}

\noindent \textbf{(A.3)}\ [{\bf Amalgamation}] Given $a$ and $A$
with $\operatorname{depth}_{A}(a)=n$, the following holds:
\begin{itemize}
		\item[(A.3.1)] $(\forall B\in [n,A])\ \ ([a,B]\neq\emptyset)$.
		\item[(A.3.2)] $(\forall B\in [a,A])\ \ (\exists A'\in [n,A])\ \
		([a,A']\subseteq [a,B])$.
\end{itemize}

\bigskip

\noindent \textbf{(A.4)}\ [{\bf Pigeonhole Principle}] Given $a$
and $A$ with $\operatorname{depth}_{A}(a) = n$, for every $\mathcal{O}\subseteq\mathcal{AR}_{|a|+1}$ there is $B\in
[n,A]$ such that $r_{|a|+1}[a,B]\subseteq\mathcal{O}$ or $r_{|a|+1}[a,B]\subseteq\mathcal{O}^c$.

\begin{thm}[Todorcevic, \cite{Todo}]\label{AbsEll}
	{\rm [{\bf Abstract Ellentuck Theorem}]}
	Any $(\mathcal{R}, \leq, r)$ with $\mathcal{R}$ metrically closed and satisfying (A.1)-(A.4) is a
	topological Ramsey space.
\end{thm}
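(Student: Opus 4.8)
The plan is to follow the combinatorial forcing method and reduce everything to a single fusion lemma, using (A.3) and (A.4) as the engines and (A.1)--(A.2) to make the fusion converge. Throughout, ``open'', ``meager'', etc.\ refer to the Ellentuck topology generated by the neighborhoods $[a,A]$, of which the metric topology is the coarser restriction. Fix a set $\mathcal{X}\subseteq\mathcal{R}$ and, for $A\in\mathcal{R}$ and $a\in\mathcal{AR}\!\upharpoonright\! A$, declare that $A$ \emph{accepts} $a$ if $[a,A]\subseteq\mathcal{X}$; that $A$ \emph{rejects} $a$ if no $B\in[\depth_A(a),A]$ accepts $a$; and that $A$ \emph{decides} $a$ if it accepts or rejects $a$. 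The entire argument revolves around manipulating this dichotomy.

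First I would record the elementary monotonicity facts: acceptance passes to every $B\leq A$ because $[a,B]\subseteq[a,A]$, and, with the help of the amalgamation axiom (A.3), rejection passes to every $B\in[\depth_A(a),A]$; moreover for a single pair $(a,A)$ one always finds $B\in[\depth_A(a),A]$ deciding $a$, since either $A$ already rejects $a$ or some witness accepts it. The first substantial step is the \emph{propagation lemma}: if $A$ rejects $a$, then there is $B\in[\depth_A(a),A]$ such that $B$ rejects every one-step extension $a'\in r_{|a|+1}[a,B]$. This is exactly where (A.4) enters: applying the Pigeonhole Principle to the set $\mathcal{O}\subseteq\mathcal{AR}_{|a|+1}$ of those $a'$ that are accepted by some condition, one obtains $B$ with $r_{|a|+1}[a,B]$ lying entirely inside $\mathcal{O}$ or entirely inside $\mathcal{O}^c$; the first alternative would force $A$ to accept $a$, contradicting rejection, so the second holds and every immediate successor is rejected.

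Next I would run the fusion. Starting from a given nonempty $[a,A]$, I would build a $\leq$-descending sequence $A=A_{0}\geq A_{1}\geq\cdots$ in which at stage $n$ the propagation lemma (and deciding) is applied uniformly to every approximation of length $n$ lying above the current condition; there are only finitely many relevant ones by (A.2.2), so each stage is a finite iteration of amalgamations via (A.3). The metrization axiom (A.1) together with metric closedness guarantees that the approximations stabilize and the sequence has a $\leq$-lower bound $B\leq A$ lying in $\mathcal{R}$, and that $B$ decides every $a'\in\mathcal{AR}\!\upharpoonright\! B$ with rejection propagating through the whole tree below it. Consequently, for the original $a$, either some condition below $B$ accepts $a$, giving $[a,B]\subseteq\mathcal{X}$, or $B$ rejects $a$; in the latter case, because $\mathcal{X}$ is open, no $C\in[a,B]$ can lie in $\mathcal{X}$ (an initial approximation of such a $C$ would witness acceptance), so $[a,B]\cap\mathcal{X}=\emptyset$. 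This establishes the Ramsey property for open $\mathcal{X}$, and the same fusion, applied to the complement of the closure of a nowhere dense set, shows that nowhere dense sets are Ramsey null.

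Finally I would bootstrap to the general statement. Ramsey-null sets are closed under countable unions --- again by a fusion diagonalizing through a countable sequence of nowhere dense sets --- so meager sets are Ramsey null, which is the second half of the conclusion. For the first half, a set $\mathcal{X}$ with the Baire property can be written as $\mathcal{G}\triangle\mathcal{M}$ with $\mathcal{G}$ open and $\mathcal{M}$ meager; given a nonempty $[a,A]$, I would first refine to a neighborhood disjoint from $\mathcal{M}$ (by its Ramsey-nullity), where $\mathcal{X}$ and $\mathcal{G}$ coincide, and then apply the open case to $\mathcal{G}$, yielding $B$ with $[a,B]\subseteq\mathcal{X}$ or $[a,B]\cap\mathcal{X}=\emptyset$. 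I expect the main obstacle to be the propagation-plus-fusion core of the second and third paragraphs: producing a single $B$ that simultaneously decides all approximations and propagates rejection through every level is delicate, and it is precisely there that the interplay of the Pigeonhole Principle (A.4), amalgamation (A.3), and the finiteness and convergence supplied by (A.1)--(A.2) must be orchestrated correctly.
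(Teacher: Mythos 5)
The paper does not prove this theorem at all: it is quoted from Todorcevic's book \cite{Todo} as a black box, so there is no in-paper proof to compare against. Your strategy --- combinatorial forcing (accept/reject/decide), a propagation lemma driven by (A.4), a fusion made convergent by (A.1)--(A.3) and metric closedness, then open $\Rightarrow$ Ramsey, nowhere dense $\Rightarrow$ Ramsey null, $\sigma$-ideal, and finally $\mathcal{G}\triangle\mathcal{M}$ --- is exactly the standard Ellentuck/Carlson--Simpson/Todorcevic argument, and it is also the argument the paper itself reproduces in relativized form in Section \ref{semisele} (Lemma \ref{comb forcing}, the Claim, and Lemma \ref{galvinlocal}). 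So the architecture is right.

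There is, however, a genuine gap at the point you yourself identify as the crux. In your propagation lemma you apply (A.4) to $\mathcal{O}=\{a' : a'$ is accepted by \emph{some} condition$\}$ and claim the alternative $r_{|a|+1}[a,B]\subseteq\mathcal{O}$ forces $A$ to accept $a$. It does not: the accepting witnesses $C_{a'}$ vary with $a'$, the set $r_{|a|+1}[a,B]$ is in general infinite, and amalgamating infinitely many witnesses into one condition is precisely the fusion you have not yet performed --- the argument is circular as written. The repair, which is how both \cite{Todo} and the paper's Lemma \ref{comb forcing}(6) proceed, is to take $\mathcal{O}=\{a' : A \text{ accepts } a'\}$ for the \emph{fixed} $A$; then the first alternative gives $[a,B]\subseteq\bigcup_{a'}[a',A]\subseteq\mathcal{X}$, contradicting rejection, while the second alternative only yields that no successor is \emph{accepted by} $A$, not that each is \emph{rejected}. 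To upgrade ``not accepted'' to ``rejected'' you must first run a separate fusion producing a $D\leq A$ that decides every $a'\in\mathcal{AR}\!\upharpoonright\! D$ (the Claim before Lemma \ref{galvinlocal}), and only then run the propagation fusion below $D$. Your sketch merges deciding and propagating into a single pass, which obscures exactly the step where the argument would otherwise fail; with the two fusions separated and $\mathcal{O}$ corrected, the rest of your outline goes through.
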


Besides \cite{Todo}, we refer the reader to \cite{DT1,DT2,DMT,Mijares, Mijares2,Mijares3,MijaNie,MijaPad,Tim2} for further developments on the theory of Ramsey spaces.
\section{Abstract semiselectivity}\label{semisele}

\begin{notation}
Given a triple $(\mathcal R, \leq, r)$ as defined in the previous section and $\mathcal H\subseteq\mathcal R$, let $\mathcal H\!\!\upharpoonright \!\!A = \{B\in\mathcal H : B\leq A\}$.
\end{notation}

\begin{defn}
Consider a triple $(\mathcal R, \leq, r)$ satisfying ${\bf A1 - A4}$. Given $\mathcal H\subseteq\mathcal R$, we say that $\mathcal H$ is a \textbf{coideal} if it satisfies the following:
\begin{itemize}
\item[{(a)}] For all $A,B\in\mathcal R$, if $A\in\mathcal H$ and $A\leq B$ then $B\in\mathcal H$.
\item[{(b)}] (${\bf A3}\mod\mathcal H$) For all $A\in\mathcal H$ and $a\in\mathcal{AR}\!\!\upharpoonright \!\! A$, the following holds:
\begin{itemize}
\item $[a,B]\neq\emptyset$ for all $B\in [\depth_A(a),A]\cap\mathcal H$.
\item If $B\in\mathcal H\!\!\upharpoonright \!\!A$ and $[a,B]\neq\emptyset$ then there exists $A'\in [\depth_A(a), A]\cap\mathcal H$ such that  $\emptyset\neq [a,A']\subseteq [a,B]$.

\end{itemize}
\item[{(c)}]  (${\bf A4}\mod\mathcal H$) Let $A\in\mathcal H$ and $a\in\mathcal{AR}\!\!\upharpoonright \!\! A$ be given. For all $\mathcal O \subseteq \mathcal{AR}_{|a|+1}$ there exists $B\in [\depth_A(a),A]\cap\mathcal H$ such that $r_{|a|+1}[a,B]\subseteq\mathcal O$ or $r_{|a|+1}[a,B]\cap\mathcal O=\emptyset$.
\end{itemize}

\end{defn}

We will study conditions for a coideal $\mathcal H\subseteq \mathcal R$ such that the structure $(\mathcal R,\mathcal H,\leq, \leq, r, r)$ is a Ramsey space (in the sense of \cite{Todo}, Chapter 4). For short, from now on we will write\linebreak $(\mathcal R,\mathcal H,\leq, r)$ instead of $(\mathcal R,\mathcal H,\leq, \leq, r, r)$. It is easy to see that $(\mathcal R,\mathcal H,\leq, r)$ satisfies ${\bf A1 - A4}$ for general Ramsey spaces. Therefore, we know from the Abstract Ramsey Theorem (Theorem 4.27 in \cite{Todo}) that if $\mathcal H$ is closed in $(\mathcal{AR})^{\mathbb N}$ then $(\mathcal R,\mathcal H,\leq, r)$ is a Ramsey space. However, when $\mathcal H$ is not necessarily closed, we want to study conditions for $\mathcal H$ that will enable us to still make the structure $(\mathcal R,\mathcal H,\leq, r)$ a Ramsey space and will also allow us to study forcing notions related to $\mathcal H$ which will satisfy abstract versions of interesting properties of the corresponding forcing notions in the realm of Ellentuck's space.  One such condition is given in Definition  \ref{semiselective} below.

\medskip

From now on suppose that for a fixed $(\mathcal R, \leq, r)$, ${\bf A1 - A4}$ hold and
$\mathcal R$ is metrically closed. Hence $(\mathcal R, \leq, r)$ is a
topological Ramsey space. Let $\mathcal H\subseteq\mathcal R$ be a coideal. The natural definitions of $\mathcal H$-Ramsey and $\mathcal H$-Baire sets are:

\begin{defn} $\mathcal X\subseteq \mathcal R$ is \textbf{$\mathcal H$-Ramsey} if for every $[a,A]\neq \emptyset$, with $A\in \mathcal H$, there exists $B\in [a,A]\cap \mathcal H$ such that $[a,B]\subseteq \mathcal X$ or $[a,B]\subseteq \mathcal X^c$. If for every $[a,A]\neq \emptyset$, there exists $B\in [a,A]\cap\mathcal H$ such that $[a,B]\subseteq \mathcal X^c$; we say that  $\mathcal X$ is \textbf{$\mathcal H$-Ramsey null}.
\end{defn}

\begin{defn} $\mathcal X\subseteq \mathcal R$ is $\mathcal H$-\textbf{Baire} if for every $[a,A]\neq \emptyset$, with $A\in \mathcal H$, there exists $\emptyset\neq[b,B]\subseteq[a,A]$, with $B\in\mathcal H$, such that $[b,B]\subseteq \mathcal X$ or $[b,B]\subseteq \mathcal X^c$. If for every $[a,A]\neq \emptyset$, with $A\in\mathcal H$, there exists $\emptyset\neq[b,B]\subseteq[a,A]$, with $B\in \mathcal H$, such that $[b,B]\subseteq \mathcal X^c$; we say that $\mathcal X$ is $\mathcal H$-\textbf{meager}.
\end{defn}

It is clear that if $\mathcal X\subseteq \mathcal R$ is $\mathcal H$-Ramsey then $\mathcal X$ is $\mathcal H$-Baire.  

\begin{defn} Let $\mathcal H\subseteq\mathcal R$ be  a coideal. Given sets $\mathcal D, \mathcal S\subseteq\mathcal  H$, we say that  $\mathcal D$ is \textbf{dense open in} $\mathcal S$ if the following hold:
\begin{enumerate}
\item $(\forall A\in \mathcal S)\ (\exists B\in\mathcal D)\ B\leq A$.
\item  $(\forall A\in \mathcal S)\ (\forall B\in\mathcal D)\ [A\leq B \rightarrow A\in\mathcal D]$.
\end{enumerate}
\end{defn}

\begin{defn}\label{diagonal1}
Given $A\in\mathcal R$ and a family $\mathcal A=\{A_a\}_{a\in \mathcal{AR}\upharpoonright A}\subseteq\mathcal R$, we say that $B\in\mathcal R$ is a \textbf{diagonalization} of $\mathcal A$ if for every $a\in\mathcal{AR}\upharpoonright B$ we have $[a, B]\subseteq [a,A_a]$. 
\end{defn}

\begin{defn}\label{diagonal2}
 Given $A\in\mathcal H$ and a collection $\mathcal D = \{\mathcal D_a\}_{a\in \mathcal{AR}\upharpoonright A}$ such that each $\mathcal D_a$ is dense open in $\mathcal H\cap [\depth_A(a),A]$, we say that $B\in\mathcal R$ is a \textbf{diagonalization} of $\mathcal D$ if there exists a family $\mathcal A=\{A_a\}_{a\in \mathcal{AR}\upharpoonright A}$, with $A_a\in\mathcal D_a$, such that $B$ is a diagonalization of $\mathcal A$.
\end{defn}

\begin{defn}\label{semiselective}
We say that a coideal $\mathcal H\subseteq\mathcal R$ is \textbf{semiselective} if for every $A\in \mathcal H$, every collection $\mathcal D = \{\mathcal D_a\}_{a\in \mathcal{AR}\upharpoonright A}$ such that each $\mathcal D_a$ is dense open in $\mathcal H\cap [\depth_A(a),A]$ and every $B\in\mathcal H\!\!\upharpoonright \!\!A$, there exists $C\in\mathcal{H}$ such that $C$ is a diagonalization of $\mathcal D$ and $C\leq B$.
\end{defn}

\vspace{.25cm}

Our goal in this section is to prove that the families of $\mathcal H$-Ramsey sets and $\mathcal H$-Baire sets coincide, if $\mathcal H$ is semiselective (see Theorem \ref{baire-ramsey} below).  The following \textbf{combinatorial forcing} will be used: Fix $\mathcal F\subseteq \mathcal{AR}$. We say that $A\in \mathcal H$
\emph{\textbf{accepts}} $a\in \mathcal{AR}$ if for every $B\in \mathcal H\cap [\depth_A(a),A]$ there exists $n\in \mathbb{N}$
such that $r_n(B)\in\mathcal F$. We say that $A$
\emph{\textbf{rejects}} $a$ if for all $B\in [\depth_A(a),A]\cap\mathcal H$, $B$ does not accept $a$; and we say that $A$
\emph{\textbf{decides}} $a$ if $A$ either accepts or rejects $a$.

\begin{lem}\label{comb forcing}
The combinatorial forcing has the following properties:
\begin{enumerate}
\item If $A$ both accepts and rejects $a$ then $[a,A]=\emptyset$.
\item\label{accept heir} If $A$ accepts $a$ then every $B\in \mathcal H\!\!\upharpoonright \!\!A$ with $[a,B]\neq\emptyset$ accepts $a$.
\item\label{reject heir} If $A$ rejects $a$, then every $B\in \mathcal H\!\!\upharpoonright \!\!A$ with $[a,B]\neq\emptyset$ rejects $a$.
\item\label{some decide} For every $A\in \mathcal H$ and every $a\in \mathcal{AR}\!\!\upharpoonright \!\!A$ there exists $B\in [a,A]\cap\mathcal H$ which decides $a$.
\item\label{accepts longer} If $A$ accepts $a$ then $A$ accepts every $b\in r_{|a|+1}([a,A])$.
\item\label{not accept longer} If $A$ rejects $a$ then there exists $B\in [a,A]\cap\mathcal H$ such that $A$ does not accept any $b\in r_{|a|+1}([a,B])$.
\end{enumerate}
\end{lem}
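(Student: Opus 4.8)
The plan is to derive all six items directly from the definitions of \emph{accepts} and \emph{rejects}, using only the two coideal axioms available for $\mathcal H$ — amalgamation $({\bf A3}\bmod\mathcal H)$ and the pigeonhole principle $({\bf A4}\bmod\mathcal H)$ — together with the upward closure in clause (a). Item (1) is the quickest: whenever $[a,A]\neq\emptyset$, the condition $A$ itself lies in $[\depth_A(a),A]\cap\mathcal H$, so if $A$ rejects $a$ then in particular $A$ does not accept $a$. Read contrapositively, this says that $A$ can accept and reject $a$ simultaneously only when $[a,A]=\emptyset$, which is exactly (1).

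For the heir properties I would prove (2) first and obtain (3) as its dual. Given that $A$ accepts $a$ and $B\in\mathcal H\!\!\upharpoonright\!\!A$ with $[a,B]\neq\emptyset$, take any $C\in\mathcal H\cap[\depth_B(a),B]$; since $C\leq B\leq A$ and $[a,C]\neq\emptyset$, amalgamation $({\bf A3.2}\bmod\mathcal H)$ supplies an $\mathcal H$-condition inside $[\depth_A(a),A]$ whose $a$-neighborhood sits within that of $C$, and the hypothesis that $A$ accepts $a$ then forces $C$ to have an approximation in $\mathcal F$. As $C$ was arbitrary, $B$ accepts $a$. Item (3) is then the contrapositive: if some $C\in[\depth_B(a),B]\cap\mathcal H$ accepted $a$, the same amalgamation would produce a member of $[\depth_A(a),A]\cap\mathcal H$ accepting $a$, contradicting that $A$ rejects $a$. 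Item (4) follows by cases: if $A$ fails to reject $a$, the witness of non-rejection accepts $a$ and so already decides it; if $A$ rejects $a$, then by (3) every element of $[a,A]\cap\mathcal H$ rejects (hence decides) $a$, and such elements exist by amalgamation. Item (5) is immediate, since each one-step extension $b\in r_{|a|+1}([a,A])$ satisfies $[b,A]\subseteq[a,A]$, so acceptance of $a$ passes to $b$.

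The substantive step, and the one I expect to be the main obstacle, is (6), since it is the only place that genuinely uses the pigeonhole principle modulo $\mathcal H$. I would set $\mathcal O=\{b\in\mathcal{AR}_{|a|+1}:A\text{ accepts }b\}$ and apply $({\bf A4}\bmod\mathcal H)$ to obtain $B\in[\depth_A(a),A]\cap\mathcal H$ with either $r_{|a|+1}[a,B]\subseteq\mathcal O$ or $r_{|a|+1}[a,B]\cap\mathcal O=\emptyset$, the aim being to exclude the first alternative. If every one-step extension of $a$ inside $B$ lay in $\mathcal O$, then for each $C\in\mathcal H\cap[a,B]$ the approximation $r_{|a|+1}(C)$ would be accepted by $A$, and, reasoning as in (5) and (2), $C$ would meet $\mathcal F$; this would make $B$ accept $a$, contradicting that $A$ rejects $a$ and $B\in[\depth_A(a),A]\cap\mathcal H$. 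Hence the second alternative holds, i.e.\ $A$ accepts no $b\in r_{|a|+1}([a,B])$, which is precisely (6); a last use of amalgamation relocates $B$ inside $[a,A]\cap\mathcal H$ if necessary. The delicate point throughout is the bookkeeping needed to pass between the neighborhoods $[\depth_B(a),B]$, $[\depth_A(a),A]$ and the $a$-neighborhoods $[a,\cdot]$, which is exactly what ${\bf A3}\bmod\mathcal H$ is designed to manage.
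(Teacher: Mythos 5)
Your proposal is correct and follows essentially the same route as the paper: items (1)--(5) are treated as routine consequences of the definitions (with ${\bf A3}\bmod\mathcal H$ handling the passage between $[\depth_A(a),A]$ and $[\depth_B(a),B]$ in the heir properties), and item (6) is obtained exactly as in the paper, by applying ${\bf A4}\bmod\mathcal H$ to the set $\mathcal O$ of approximations accepted by $A$ and ruling out the first alternative via the contradiction with rejection.
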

\begin{proof}
(1)--(5) follows from the definitions. Let us prove (\ref{not accept longer}):

\medskip

Suppose that $A$ rejects $a$. Let $\mathcal O=\{b\in\mathcal{AR} : A \mbox{ accepts } b\}$. By {\bf A.4} mod $\mathcal H$, there exists $B\in\mathcal H\cap [a,A]$ such that $r_{|a|+1}[a,B]\subseteq \mathcal O$ or $r_{|a|+1}[a,B]\subseteq \mathcal O^c$. If the first alternative holds then take $C\in\mathcal H\cap [a,B]$. Let $b=r_{|a|+1}(C)$. Then $b\in\mathcal O$ and therefore $A$ accepts $b$. Since $C\in[b,A]$ then there exists $n$ such that $r_n(C)\in\mathcal F$. Therefore $B$ accepts $a$, because $C$ is arbitrary. But this contradicts that $A$ rejects $a$. Hence, $r_{|a|+1}[a,B]\subseteq \mathcal O^c$ and we are done.
\end{proof}

\begin{claim}
 Given $A\in\mathcal H$, there exists $D\in \mathcal H \!\!\upharpoonright \!\!A$ which decides every $b\in \mathcal{AR} \!\!\upharpoonright \!\!D$.
\end{claim}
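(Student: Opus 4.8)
The plan is to build $D$ by a fusion argument, diagonalizing the ``decides'' relation across all approximations simultaneously. The key input is part (\ref{some decide}) of Lemma \ref{comb forcing}, which guarantees that for any single $a \in \mathcal{AR}\!\!\upharpoonright\!\!A$ we can pass to some $B \in [a,A] \cap \mathcal H$ deciding $a$, together with the heredity properties (\ref{accept heir}) and (\ref{reject heir}), which ensure that once a set decides $a$, every smaller set in the coideal (whose neighborhood at $a$ is nonempty) decides $a$ the same way. The obstacle is that there are infinitely many $a$ to decide, so a single application of (\ref{some decide}) is not enough; I must arrange the decisions coherently and then extract a single $D$ that inherits all of them.

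First I would set up the diagonalization machinery already available in this section. For each $a \in \mathcal{AR}\!\!\upharpoonright\!\!A$, I would define
\[
\mathcal D_a = \{C \in \mathcal H \cap [\depth_A(a),A] : C \text{ decides } a\}.
\]
The first step is to verify that each $\mathcal D_a$ is dense open in $\mathcal H \cap [\depth_A(a),A]$, in the sense of Definition \ref{diagonal2}. Density is exactly (\ref{some decide}): given any $E \in \mathcal H \cap [\depth_A(a),A]$, applying (\ref{some decide}) to $E$ and $a$ yields a deciding $B \in [a,E]\cap\mathcal H$, and $B \leq E$. Openness is the heredity: if $C \in \mathcal D_a$ and $C' \leq C$ with $C' \in \mathcal H \cap [\depth_A(a),A]$, then by (\ref{accept heir}) and (\ref{reject heir}), $C'$ decides $a$ the same way $C$ does (noting $[a,C'] \neq \emptyset$ since $C'$ lies in the relevant neighborhood and $\depth_A(a)$ controls this via $\mathbf{A3}\!\mod\!\mathcal H$), so $C' \in \mathcal D_a$.

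Once each $\mathcal D_a$ is dense open, I would invoke semiselectivity of $\mathcal H$ (Definition \ref{semiselective}) with the collection $\mathcal D = \{\mathcal D_a\}_{a \in \mathcal{AR}\upharpoonright A}$ and the starting point $A$ itself (taking $B = A$). This produces $D \in \mathcal H$, with $D \leq A$, that is a diagonalization of $\mathcal D$: there is a family $\mathcal A = \{A_a\}$ with $A_a \in \mathcal D_a$ and $[a,D] \subseteq [a,A_a]$ for every $a \in \mathcal{AR}\!\!\upharpoonright\!\!D$. The final step is to check that $D$ decides every $b \in \mathcal{AR}\!\!\upharpoonright\!\!D$. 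Fix such a $b$. Since $A_b$ decides $b$ and $[b,D] \subseteq [b,A_b]$, the set $D$ lies in the neighborhood where $A_b$'s decision is inherited; applying the heredity properties (\ref{accept heir})--(\ref{reject heir}) to $A_b$ and $D$ (with $[b,D] \neq \emptyset$ because $b \in \mathcal{AR}\!\!\upharpoonright\!\!D$) shows $D$ decides $b$ exactly as $A_b$ does. Hence $D \in \mathcal H\!\!\upharpoonright\!\!A$ decides every $b \in \mathcal{AR}\!\!\upharpoonright\!\!D$, as required. The main care-point throughout is matching the depth conditions: the heredity lemmas presuppose membership in the correct Ellentuck neighborhood, so I would make sure at each application that the deciding set and its refinement both sit in $[\depth_A(\cdot),A]\cap\mathcal H$ or the relevant $[a,\cdot]$, which is precisely what the dense-open and diagonalization definitions track.
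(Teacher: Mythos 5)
Your proposal is correct and follows essentially the same route as the paper: define $\mathcal D_a=\{C\in\mathcal H\cap[\depth_A(a),A]: C\text{ decides }a\}$, use parts (2)--(4) of Lemma \ref{comb forcing} to see each $\mathcal D_a$ is dense open, apply semiselectivity to diagonalize, and use the heredity properties again to conclude $D$ decides every $b\in\mathcal{AR}\!\!\upharpoonright\!\!D$. Your write-up is just a more detailed version of the paper's argument, including the same (implicit) appeal to heredity through the inclusion $[b,D]\subseteq[b,A_b]$.
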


\begin{proof} For every $a\in \mathcal{AR} \!\!\upharpoonright \!\!A$ define
$$\mathcal D_a=\{C\in\mathcal H\cap [\depth_A(a),A]\colon C {\mbox{ decides }}a\}$$
By parts \ref{accept heir}, \ref{reject heir} and \ref{some decide} of Lemma \ref{comb forcing} each $\mathcal D_a$ is dense open in $\mathcal H\cap [\depth_A(a),A]$. By semi-\linebreak selectivity, there exists $D\in \mathcal H\!\!\upharpoonright \!\!A$ which diagonalizes the collection $(\mathcal D_a)_{a\in \mathcal{AR}\upharpoonright A}$. By parts \ref{accept heir} and \ref{reject heir} of Lemma \ref{comb forcing}, $D$ decides every $a\in\mathcal{AR} \!\!\upharpoonright \!\!D$.
\end{proof}

The following is an abstract version of the semisective Galvin lemma (see \cite{galvin,farah}).

\begin{lem}[Semiselective abstract Galvin's lemma]\label{galvinlocal}
Given $\mathcal F\subseteq\mathcal{AR}$, a semiselective coideal $\mathcal H\subseteq \mathcal{R}$, and $A\in\mathcal H$, there exists $B\in \mathcal H\!\!\upharpoonright \!\!A$ such that one of the following holds:
\begin{enumerate}
\item $\mathcal{AR} \!\!\upharpoonright \!\!B\cap\mathcal F=\emptyset$, or
\item $\forall C\in [\emptyset,B]$ $(\exists \ n\in \mathbb{N})$ $(r_n(C)\in\mathcal F)$.
\end{enumerate}
\end{lem}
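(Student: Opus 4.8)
The plan is to apply the combinatorial forcing to $\mathcal{F}$ and use the preceding Claim to find a single condition that uniformly decides all its initial segments, then argue that this decision is the same everywhere and corresponds exactly to the dichotomy in the statement. First I would invoke the Claim to obtain $D \in \mathcal{H}\!\!\upharpoonright\!\!A$ which decides every $b \in \mathcal{AR}\!\!\upharpoonright\!\!D$. In particular $D$ decides $\emptyset = r_0(D)$, so either $D$ accepts $\emptyset$ or $D$ rejects $\emptyset$; these two cases will produce the two alternatives of the lemma.

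Next I would handle the case where $D$ accepts $\emptyset$. By definition of acceptance with $\depth_D(\emptyset)=0$, for every $C \in \mathcal{H} \cap [0,D] = \mathcal{H}\!\!\upharpoonright\!\!D$ there is $n$ with $r_n(C) \in \mathcal{F}$. The goal is to upgrade this from ``all $C \in \mathcal{H}\!\!\upharpoonright\!\!D$'' to ``all $C \in [\emptyset, D]$'', i.e.\ to conclusion (2) with $B = D$. The natural move is to shrink $D$ using semiselectivity (or the coideal axioms) so that acceptance persists down to arbitrary, not-necessarily-in-$\mathcal{H}$, extensions; I expect to use (${\bf A3}\mod\mathcal H$) together with part (\ref{accept heir}) of Lemma \ref{comb forcing} to transfer acceptance to a cofinal family and then appeal to metric closedness of $\mathcal{R}$ to capture limit points $C \in [\emptyset, D]$.

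For the case where $D$ rejects $\emptyset$, I would prove conclusion (1), namely $\mathcal{AR}\!\!\upharpoonright\!\!B \cap \mathcal{F} = \emptyset$ for a suitable $B \leq D$ in $\mathcal{H}$. The idea is to iterate part (\ref{not accept longer}) of Lemma \ref{comb forcing}: since $D$ rejects $\emptyset$, there is $B_1 \in [\emptyset,D]\cap\mathcal{H}$ such that $D$ accepts no $b \in r_1([\emptyset,B_1])$; by part (\ref{reject heir}) rejection is inherited, so one continues level by level, at stage $n$ thinning to kill all accepted length-$n$ approximations, and then diagonalizes these countably many dense-open requirements via semiselectivity to obtain a single $B \in \mathcal{H}\!\!\upharpoonright\!\!D$ with the property that $B$ rejects every $a \in \mathcal{AR}\!\!\upharpoonright\!\!B$ and moreover no $r_n(C) \in \mathcal{F}$ for $C \leq B$, which forces $\mathcal{AR}\!\!\upharpoonright\!\!B \cap \mathcal{F} = \emptyset$. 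Here I would again package the level-by-level thinning as a collection $\mathcal{D} = \{\mathcal{D}_a\}$ of dense-open sets and diagonalize, mirroring the proof of the Claim.

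The main obstacle I anticipate is the acceptance case: passing from acceptance relative to the coideal $\mathcal{H}$ (quantifying only over $C \in \mathcal{H}\!\!\upharpoonright\!\!D$) to the genuinely topological statement (2) quantifying over all $C \in [\emptyset, D]$. This is exactly where the combinatorics of $\mathcal{H}$ must interact with the ambient topological Ramsey space, and it is the step most likely to require a careful use of (${\bf A3}\mod\mathcal H$), the heredity lemmas, and metric closedness rather than a routine appeal to definitions. In contrast, the rejection case should follow fairly mechanically from iterating part (\ref{not accept longer}) and diagonalizing, as in the Claim.
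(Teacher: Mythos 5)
Your overall skeleton is the paper's: obtain the decider $D$ from the Claim, split on whether $D$ accepts or rejects $\emptyset$, and in the rejection case iterate part (\ref{not accept longer}) of Lemma \ref{comb forcing} through the dense open sets $\mathcal D_a=\{C\in\mathcal H\cap[\depth_A(a),D] : C \mbox{ rejects every } b\in r_{|a|+1}([a,C])\}$ and diagonalize by semiselectivity. That half of your proposal is essentially the paper's argument and is sound (modulo the final observation, which both you and the paper leave implicit, that an $a\in\mathcal F$ with $[a,B]\neq\emptyset$ is automatically accepted, so universal rejection along $B$ gives $\mathcal{AR}\!\!\upharpoonright\!\!B\cap\mathcal F=\emptyset$).

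The genuine gap is the acceptance case, which you explicitly leave open. In the paper this case is immediate and requires no work: the operative notion is the Nash--Williams one in which the witnessing quantifier of ``accepts'' runs over \emph{all} extensions, with only ``rejects'' and ``decides'' relativized to $\mathcal H$; read that way, ``$D$ accepts $\emptyset$'' is verbatim conclusion (2) with $B=D$. If instead one insists on quantifying acceptance only over $C\in\mathcal H\!\!\upharpoonright\!\!D$, as you do, then the upgrade you are after is not merely hard but false for the given $D$: in Ellentuck's space take $\mathcal H$ an ultrafilter containing a coinfinite set $E$ and $\mathcal F=\{a : a\cap E\neq\emptyset\}$; every $C\in\mathcal H\!\!\upharpoonright\!\!D$ has an approximation in $\mathcal F$, yet $D\setminus E$ may be an infinite member of $[\emptyset,D]$ with no such approximation. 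So no ``transfer of acceptance to a cofinal family'' can yield (2) for $B=D$; one would have to shrink $D$ again. Moreover the specific mechanism you propose points the wrong way: $\{C : (\exists n)\ r_n(C)\in\mathcal F\}=\bigcup_{a\in\mathcal F}[a]$ is metrically \emph{open}, so metric closedness of $\mathcal R$ and ``capturing limit points'' give nothing (membership in an open set does not pass to limits), and $\mathcal H\!\!\upharpoonright\!\!D$ has no reason to be metrically dense in $[\emptyset,D]$ anyway. As written, the acceptance case of your proof does not close; the correct resolution is not a topological argument but getting the quantifier bookkeeping in the definition of ``accepts'' right, after which the case is a one-line consequence of the definition.
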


\begin{proof} Consider $D$ as in the Claim. If $D$ accepts $\emptyset$ part (2) holds and we are done. So assume that $D$ rejects $\emptyset$ and for $a\in \mathcal{AR} \!\!\upharpoonright \!\!D$ define
$$\mathcal D_a=\{C\in\mathcal H\cap [\depth_A(a),D]\colon C {\mbox{ rejects every }}b\in r_{|a|+1}([a,C])\}$$if $D$ rejects $a$, and $\mathcal D_a=\mathcal H\cap [\depth_A(a),D]$, otherwise. By parts \ref{reject heir} and \ref{not accept longer} of Lemma \ref{comb forcing} each $\mathcal D_a$ is dense open in $\mathcal H\cap [\depth_A(a),D]$. By semiselectivity, choose $B\in \mathcal H\!\!\upharpoonright \!\!D$ such that for all $a\in\mathcal{AR}\!\!\upharpoonright \!\!B$ there exists $C_a\in\mathcal D_a$ with $[a,B]\subseteq [a,C_a]$. For every $a\in\mathcal{AR}\!\!\upharpoonright \!\!B$, $C_a$ rejects all $b\in r_{|a|+1}([a,C_a])$. So $B$ rejects all $b\in r_{|a|+1}([a,B])$: given one such $b$, choose any $\hat{B}\in\mathcal H\cap [b,B]$. Then $\hat{B}\in\mathcal H\cap [b,C_a]$. Therefore, since $C_a$ rejects $b$, $\hat{B}$ does not accept $b$.

Hence, $B$ satisfies that $\mathcal{AR}\!\!\upharpoonright \!\!B\cap\mathcal F=\emptyset$. This completes the proof of the Lemma.
\end{proof}

\begin{notation}
$\mathcal{AR}\!\!\upharpoonright \!\![a,B] = \{b\in\mathcal{AR} : a\sqsubseteq b\ \&\ (\exists n\geq |a|)(\exists C\in [a,B])\ b=r_n(C)\}$.

\end{notation}

In a similar way we can prove the following generalization of lemma \ref{galvinlocal}: 

\begin{lem}\label{galvinlocal2}
Given a semiselective coideal $\mathcal H$ of $\mathcal{R}$, $\mathcal F\subseteq\mathcal{AR}$,  $A\in\mathcal H$ and $a\in\mathcal{AR}\!\!\upharpoonright \!\!A$, there exists $B\in \mathcal H\cap [a,A]$ such that one of the following holds:
\begin{enumerate}
\item $\mathcal{AR}\!\!\upharpoonright \!\![a,B]\cap\mathcal F=\emptyset$, or
\item $\forall C\in [a,B]$ $(\exists \ n\in \mathbb{N})$ $(r_n(C)\in\mathcal F)$.
\end{enumerate}
\end{lem}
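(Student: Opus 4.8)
The plan is to reprove Lemma~\ref{galvinlocal2} by localizing the entire argument of Lemma~\ref{galvinlocal} below a fixed initial segment $a$, replacing $\emptyset$ by $a$ and $[\emptyset, B]$ by $[a,B]$ throughout. First I would fix $\mathcal F$, the semiselective coideal $\mathcal H$, the set $A\in\mathcal H$ and the segment $a\in\mathcal{AR}\!\!\upharpoonright\!\!A$. Applying the Claim to a set witnessing $[a,A]\neq\emptyset$ (or to $A$ directly), I obtain $D\in\mathcal H\cap[a,A]$ that decides every $b\in\mathcal{AR}\!\!\upharpoonright\!\!D$ extending $a$; the key dichotomy is then whether $D$ accepts $a$ or rejects $a$. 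If $D$ accepts $a$, then by definition every $B\in\mathcal H\cap[\depth_A(a),D]$ has the property that some $r_n(C)\in\mathcal F$ for each relevant $C$, and after passing to $C\in\mathcal H\cap[a,D]$ via (A.3)$\!\mod\mathcal H$ this yields alternative~(2). The substantive case is when $D$ rejects $a$.

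In the rejecting case I would define, for each $b\in\mathcal{AR}\!\!\upharpoonright\!\!D$ with $a\sqsubseteq b$, the dense open set
\[
\mathcal D_b=\{C\in\mathcal H\cap[\depth_A(b),D]\colon C\text{ rejects every }c\in r_{|b|+1}([b,C])\}
\]
when $D$ rejects $b$, and $\mathcal D_b=\mathcal H\cap[\depth_A(b),D]$ otherwise; for segments $b$ not extending $a$ I simply set $\mathcal D_b=\mathcal H\cap[\depth_A(b),D]$ so that the diagonalization hypothesis is vacuous there. Density and openness of each $\mathcal D_b$ follow verbatim from parts~(\ref{reject heir}) and~(\ref{not accept longer}) of Lemma~\ref{comb forcing}, exactly as in Lemma~\ref{galvinlocal}. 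By semiselectivity applied to $D$, the collection $\mathcal D=\{\mathcal D_b\}_b$ has a diagonalization $B\in\mathcal H\!\!\upharpoonright\!\!D$ with $B\leq$ a chosen member, and since $a\sqsubseteq r_{|a|}(B)$ I can arrange $B\in\mathcal H\cap[a,D]\subseteq\mathcal H\cap[a,A]$ by choosing the diagonalizing witness inside $[a,D]$.

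Next I would verify that $B$ rejects every $b\in\mathcal{AR}\!\!\upharpoonright\!\![a,B]$, proceeding by induction on $|b|$ with base case $b=a$ (rejected since $B\leq D$ and $D$ rejects $a$, using part~(\ref{reject heir})). For the inductive step, given $b$ rejected with $a\sqsubseteq b$, the diagonalization property gives $C_b\in\mathcal D_b$ with $[b,B]\subseteq[b,C_b]$, so $C_b$ rejects every $c\in r_{|b|+1}([b,C_b])\supseteq r_{|b|+1}([b,B])$; then for any such $c$ and any $\hat B\in\mathcal H\cap[c,B]\subseteq\mathcal H\cap[c,C_b]$, the fact that $C_b$ rejects $c$ forces $\hat B$ not to accept $c$, whence $B$ rejects $c$. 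This shows no $c\in\mathcal{AR}\!\!\upharpoonright\!\![a,B]$ is accepted by $B$, and since an element of $\mathcal F$ appearing as some $r_n(C)$ with $C\in[a,B]$ would force acceptance of a segment, we conclude $\mathcal{AR}\!\!\upharpoonright\!\![a,B]\cap\mathcal F=\emptyset$, i.e.\ alternative~(1). The main obstacle I anticipate is bookkeeping with depths and the partial order $\sqsubseteq$: one must ensure that restricting attention to segments extending $a$ does not break density of the $\mathcal D_b$ and that the diagonalizing $B$ genuinely lands in $[a,A]$ rather than merely in $[\depth_A(a),A]$, which is handled by (A.3.2)$\!\mod\mathcal H$ in clause~(b) of the coideal definition.
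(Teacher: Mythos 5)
Your proposal is correct and takes essentially the same approach as the paper: the paper proves Lemma~\ref{galvinlocal2} only by remarking that it follows ``in a similar way'' from Lemma~\ref{galvinlocal}, and your proof is precisely that relativization, replacing $\emptyset$ by $a$ throughout the combinatorial-forcing argument and handling the only real new bookkeeping (keeping the diagonalization inside $[a,A]$ rather than $[\depth_A(a),A]$) with (A.3) mod $\mathcal H$ as the paper's conventions require.
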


Now, we give an application of Lemma \ref{galvinlocal}. Recall from Equation \ref{eq basic opens in ARN} that the basic metric open subsets of $\mathcal R$ are of the form $[b]=\{A\in\mathcal R\colon b\sqsubset A\}$, where $b\sqsubset A$ means\linebreak  $(\exists n\in\mathbb{N})\ (r_n(A)=b)$.

\begin{thm}\label{abiertos}
Suppose that $\mathcal{H}\subseteq\mathcal{R}$ is a semiselective coideal. Then the metric open subsets of $\mathcal R$ are $\mathcal H$-Ramsey.
\end{thm}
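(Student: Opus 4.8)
The plan is to show that every basic metric open set $[b]$ is $\mathcal{H}$-Ramsey, since an arbitrary metric open set is a union of such basic sets and one reduces to the basic case by working inside a fixed neighborhood $[a,A]$. So fix a nonempty $[a,A]$ with $A\in\mathcal{H}$, and fix $b\in\mathcal{AR}$; I want to produce $B\in[a,A]\cap\mathcal{H}$ with $[a,B]\subseteq[b]$ or $[a,B]\cap[b]=\emptyset$. The natural move is to apply the Galvin-type dichotomy of Lemma \ref{galvinlocal2} to the one-element family $\mathcal{F}=\{b\}$ (or more precisely to $\mathcal{F}$ chosen so that ``$\exists n\ r_n(C)\in\mathcal{F}$'' captures membership of $C$ in $[b]$).

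First I would reduce to the case where $a\sqsubseteq b$ or $b\sqsubseteq a$: if $b$ and $a$ are $\sqsubseteq$-incomparable, then no $C\in[a,A]$ can have $b$ as an approximation (since all elements of $[a,B]$ have $a$ as an approximation, and by A.1.3 any two approximations of a single $C$ are $\sqsubseteq$-comparable), so $[a,A]\cap[b]=\emptyset$ and we may take $B$ with $r_{\depth}(B)$ arbitrary, i.e.\ any $B\in[a,A]\cap\mathcal{H}$ works for the second alternative. If $b\sqsubseteq a$, then every $C\in[a,A]$ already has $b$ as an approximation, so $[a,A]\subseteq[b]$ and again any $B$ works, this time for the first alternative. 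The remaining case is $a\sqsubset b$ (proper), which is where the real content lies.

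For the case $a\sqsubset b$, I would set $\mathcal{F}=\{b\}$ and apply Lemma \ref{galvinlocal2} to obtain $B\in\mathcal{H}\cap[a,A]$ satisfying one of its two alternatives. If alternative (1) holds, then $\mathcal{AR}\!\!\upharpoonright\!\![a,B]\cap\mathcal{F}=\emptyset$, which says $b$ is not an approximation of any $C\in[a,B]$, hence $[a,B]\cap[b]=\emptyset$. If alternative (2) holds, then every $C\in[a,B]$ has $r_n(C)\in\{b\}$ for some $n$, i.e.\ $b\sqsubset C$, so $[a,B]\subseteq[b]$. Either way the required dichotomy holds, and since $\mathcal{H}$ is semiselective (so the hypotheses of Lemma \ref{galvinlocal2} are met) the desired $B$ lies in $\mathcal{H}$.

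The step I expect to be the main obstacle is the passage from a single basic open $[b]$ to an arbitrary metric open set $\mathcal{U}=\bigcup_i[b_i]$: the Galvin lemma handles one $b$ at a time, but naively iterating over infinitely many $b_i$ need not terminate. The clean way around this is to apply the dichotomy \emph{once}, to the set $\mathcal{F}=\{\,b\in\mathcal{AR}: [b]\subseteq\mathcal{U}\,\}$ of all finite approximations that already force membership in $\mathcal{U}$. Then alternative (2) of Lemma \ref{galvinlocal2} gives a $B$ with every $C\in[a,B]$ having some approximation in $\mathcal{F}$, whence $[a,B]\subseteq\mathcal{U}$; and alternative (1) gives a $B$ with no $C\in[a,B]$ having an approximation in $\mathcal{F}$, which together with the fact that $\mathcal{U}$ is generated by the $[b]$ with $b\in\mathcal{F}$ yields $[a,B]\cap\mathcal{U}=\emptyset$. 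Verifying this last implication — that ``no approximation of $C$ lies in $\mathcal{F}$'' genuinely forces $C\notin\mathcal{U}$ — is the delicate point, and it is exactly here that one uses that $\mathcal{U}$ being open means $C\in\mathcal{U}$ iff some $r_n(C)\in\mathcal{F}$.
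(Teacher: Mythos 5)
Your final paragraph is exactly the paper's proof: write the open set as $\bigcup_{b\in\mathcal F}[b]$ and apply the abstract Galvin lemma once to the generating family $\mathcal F$, so that alternative (2) gives $[a,B]\subseteq\mathcal X$ and alternative (1) gives $[a,B]\cap\mathcal X=\emptyset$ (the paper reduces to $a=\emptyset$, which also disposes of the small point about approximations shorter than $a$). The opening two paragraphs on a single basic open $[b]$ are a detour you correctly abandon, but the argument you settle on is the same as the paper's and is correct.
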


\begin{proof}
Let $\mathcal{X}$ be a metric open subset of $\mathcal{R}$ and fix a nonempty $[a,A]$ with $A\in\mathcal{H}$. Without a loss of generality, we can assume $a = \emptyset$. Since $\mathcal{X}$ is open, there exists $\mathcal{F}\subseteq \mathcal{AR}$ such that $\mathcal X = \bigcup_{b\in \mathcal F} [b]$. Let $B\in \mathcal H\!\!\upharpoonright \!\!A$ be as in Lemma \ref{galvinlocal}. If (1) holds then $[0,B]\subseteq\mathcal{X}^c$ and if (2) holds then $[0,B]\subseteq\mathcal{X}$.
\end{proof}

The following is one of the main results of this work. 

\begin{thm}\label{baire-ramsey}
If $\mathcal{H}\subseteq\mathcal{R}$ is a semiselective coideal then $\mathcal X\subseteq\mathcal R$ is $\mathcal H$--Ramsey iff $\mathcal X$ is $\mathcal H$--Baire
\end{thm}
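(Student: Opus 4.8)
The implication from $\mathcal H$--Ramsey to $\mathcal H$--Baire is already noted (take $b=a$). For the converse I would run a combinatorial forcing relative to $\mathcal X$, parallel to Lemma \ref{comb forcing} and the Claim, but with a \emph{symmetric} notion of acceptance: say that $A\in\mathcal H$ \emph{accepts} $a\in\mathcal{AR}\upharpoonright A$ if $[a,A]\subseteq\mathcal X$ or $[a,A]\subseteq\mathcal X^c$ (i.e. $[a,A]$ is decided), that $A$ \emph{rejects} $a$ if no $C\in[\depth_A(a),A]\cap\mathcal H$ accepts $a$, and that $A$ \emph{decides} $a$ if it accepts or rejects $a$. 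The analogues of Lemma \ref{comb forcing}(1)--(5) hold by the same arguments (heredity uses $[a,B]\subseteq[a,A]$ and A.3 mod $\mathcal H$; ``some decide'' is immediate from the definition of rejection). For the analogue of Lemma \ref{comb forcing}(6) I would apply A.4 mod $\mathcal H$ \emph{twice}: first to $\mathcal O_1=\{b\in\mathcal{AR}_{|a|+1}:[b,A]\subseteq\mathcal X\}$ and then, inside the resulting condition, to $\mathcal O_2=\{b\in\mathcal{AR}_{|a|+1}:[b,A]\subseteq\mathcal X^c\}$. If either homogeneous alternative landed inside $\mathcal O_1$ or $\mathcal O_2$ we would get $[a,B]\subseteq\mathcal X$ or $[a,B]\subseteq\mathcal X^c$, so $B$ would accept $a$, contradicting rejection; hence we may refine to $B\in[a,A]\cap\mathcal H$ with $r_{|a|+1}[a,B]$ disjoint from $\mathcal O_1\cup\mathcal O_2$, i.e. $A$ accepts no one--step extension of $a$ inside $[a,B]$. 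With these in hand the Claim goes through verbatim: every $A\in\mathcal H$ has a refinement $D\in\mathcal H\upharpoonright A$ deciding every $b\in\mathcal{AR}\upharpoonright D$.

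Next I would prove the symmetric analogue of Lemma \ref{galvinlocal2}: for every $A\in\mathcal H$ and $a\in\mathcal{AR}\upharpoonright A$ there is $B\in\mathcal H\cap[a,A]$ such that either (i) $[a,B]\subseteq\mathcal X$ or $[a,B]\subseteq\mathcal X^c$, or (ii) no $b\in\mathcal{AR}\upharpoonright[a,B]$ is accepted by any $C\in\mathcal H\cap[\depth_B(b),B]$. Fix the decider $D\in\mathcal H\cap[a,A]$ from the Claim. If $D$ accepts $a$ we are in case (i) with $B=D$. If $D$ rejects $a$, I would run the fusion from the proof of Lemma \ref{galvinlocal}: for $b\in\mathcal{AR}\upharpoonright[a,D]$ set $\mathcal D_b=\{C\in\mathcal H\cap[\depth_D(b),D]: C \mbox{ rejects every } c\in r_{|b|+1}([b,C])\}$ when $D$ rejects $b$, and $\mathcal D_b=\mathcal H\cap[\depth_D(b),D]$ otherwise; by reject--heredity and the symmetric version of (6) each $\mathcal D_b$ is dense open, so semiselectivity yields a diagonalization $B\in\mathcal H\cap[a,D]$. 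Since $D$ decides every stem, an induction on $|b|$ shows that every $b\in\mathcal{AR}\upharpoonright[a,B]$ is rejected by $B$ (if $B$ rejected $b$ then $D$ must reject $b$, whence $B\leq C_b\in\mathcal D_b$ rejects the one--step extensions of $b$, and reject--heredity carries this back to $B$), which is exactly alternative (ii).

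Finally the hypothesis enters: if $\mathcal X$ is $\mathcal H$--Baire then alternative (ii) is impossible, because $\mathcal H$--Baireness applied to the nonempty $[a,B]$ produces $\emptyset\neq[b,B']\subseteq[a,B]$ with $B'\in\mathcal H$ and $[b,B']\subseteq\mathcal X$ or $[b,B']\subseteq\mathcal X^c$; then $B'$ accepts $b$, and amalgamating into $[\depth_B(b),B]$ by A.3 mod $\mathcal H$ gives $C\in\mathcal H\cap[\depth_B(b),B]$ accepting $b$, contradicting (ii). Hence (i) holds, and $B\in[a,A]\cap\mathcal H$ witnesses that $\mathcal X$ is $\mathcal H$--Ramsey at $[a,A]$. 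The conceptual point making this work is the symmetric acceptance: with it, the rejecting alternative of the Galvin dichotomy contradicts $\mathcal H$--Baireness outright, so one never needs a separate ``$\mathcal H$--meager $\Rightarrow$ $\mathcal H$--Ramsey null'' argument. I expect the main obstacle to be the fusion itself — checking that the families $\mathcal D_b$ really are dense open in $\mathcal H\cap[\depth_D(b),D]$ and that the diagonalization, together with the double application of A.4 mod $\mathcal H$ in (6), can be arranged so the witnesses genuinely lie in $[a,\cdot]\cap\mathcal H$ with the correct depth bookkeeping, which is exactly where semiselectivity and the amalgamation axiom A.3 mod $\mathcal H$ are used in full.
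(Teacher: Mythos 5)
Your overall strategy is sound but genuinely different from the paper's. The paper does \emph{not} redo the combinatorial forcing for $\mathcal X$: it first diagonalizes the dense open sets
$\mathcal D_a=\{B\in[\depth_A(a),A]\cap\mathcal H : [a,B]\subseteq\mathcal X \mbox{ or } [a,B]\subseteq\mathcal X^c \mbox{ or } (\forall C\in[a,B])\,([a,C]\cap\mathcal X\neq\emptyset \mbox{ and } [a,C]\cap\mathcal X^c\neq\emptyset)\}$,
obtaining $B$, and then simply applies the already-proved Lemma \ref{galvinlocal} twice, to $\mathcal F_0=\{a:[a,B]\subseteq\mathcal X\}$ and then to $\mathcal F_1=\{a:[a,B]\subseteq\mathcal X^c\}$; $\mathcal H$--Baireness rules out the case where both applications land in alternative (1). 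Your route --- rerunning the combinatorial forcing with the symmetric acceptance $[a,A]\subseteq\mathcal X$ or $[a,A]\subseteq\mathcal X^c$ --- is the classical Ellentuck-style argument and would also work; it is more self-contained but duplicates machinery the paper already has, whereas the paper's reduction reuses Lemma \ref{galvinlocal} at the cost of the slightly odd-looking three-way dense set.

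There is, however, one concrete weak point in your final step, and it is exactly what the third alternative in the paper's $\mathcal D_a$ is designed to handle. To refute your alternative (ii) you take the $\mathcal H$--Baire witness $\emptyset\neq[b,B']\subseteq[a,B]$ with $B'\in\mathcal H$ and ``amalgamate into $[\depth_B(b),B]$ by \textbf{A.3} mod $\mathcal H$.'' But \textbf{A.3} mod $\mathcal H$ requires $B'\in\mathcal H\!\upharpoonright\! B$, and the definition of $\mathcal H$--Baire only guarantees $[b,B']\subseteq[a,B]$ and $B'\in\mathcal H$ --- it does \emph{not} give $B'\leq B$, and a coideal has no finite-intersection property to manufacture a common refinement in $\mathcal H$. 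So you cannot in general produce a $C\in\mathcal H\cap[\depth_B(b),B]$ accepting $b$, and alternative (ii) as you stated it (quantifying only over $C\in\mathcal H\cap[\depth_B(b),B]$) is not directly contradicted by $\mathcal H$--Baireness. The paper sidesteps this because its third alternative asserts that \emph{every} $C\in[a,B]$ (with no membership in $\mathcal H$ required) has $[a,C]$ meeting both $\mathcal X$ and $\mathcal X^c$; any single element $C$ of $[b,B']$ then already yields the contradiction. To repair your argument you should either build this stronger, $\mathcal H$-free clause into your notion of rejection (or into the dense sets of your fusion), so that alternative (ii) contradicts the existence of \emph{any} decided $[b,B']$ inside $[a,B]$, or else prove separately that the $B'$ in the $\mathcal H$--Baire definition can always be replaced by a member of $\mathcal H\!\upharpoonright\! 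B$.
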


\begin{proof} Let $\mathcal X$ be a $\mathcal H$--Baire subset of $\mathcal R$ and consider $A\in \mathcal H$. As before, we only proof the result for $[\emptyset,A]$ without a loss of generality. For $a\in\mathcal{AR} \!\!\upharpoonright \!\!A$ define
$$\mathcal D_a=\{B\in[\depth_A(a),A]\cap\mathcal H\colon [a,B]\subseteq \mathcal X {\mbox { or }} [a,B]\subseteq \mathcal X^c$$ $${\mbox { or }} [(\forall C\in[a,B])\  [a,C]\cap\mathcal X\neq\emptyset {\mbox { and }} [a,C]\cap\mathcal X^c\neq\emptyset]\}$$
It is easy to see that each $\mathcal D_a$ is dense open in $\mathcal H\cap [\depth_A(a),A]$. By semiselectivity, choose $B\in\mathcal H\!\!\upharpoonright \!\!A$ which diagonalizes the collection $(\mathcal D_a)_{a\in\mathcal{AR} \upharpoonright A}$. Let $\mathcal F_0=\{a\in\mathcal{AR} \!\!\upharpoonright \!\!A\colon [a,B]\subseteq \mathcal X\}$ and $\mathcal F_1=\{a\in\mathcal{AR} \!\!\upharpoonright \!\!A\colon [a,B]\subseteq \mathcal X^c\}$. Consider $B_0\in\mathcal H\!\!\upharpoonright \!\!B$ as in Lemma \ref{galvinlocal} applied to $\mathcal F_0$ and $B$. If (2) of Lemma \ref{galvinlocal} holds then $[\emptyset,B_0]\subseteq\mathcal X$ and we are done. So assume that (1) holds. That is, $\mathcal{AR}\!\!\upharpoonright \!\!B_0\cap \mathcal F_0 = \emptyset$. Now consider $B_1$ as in Lemma \ref{galvinlocal} applied to $\mathcal F_1$ and $B_0$. Again, if (2) holds then $[\emptyset,B_1]\subseteq\mathcal X^c$ and we are done. Notice that $\mathcal{AR}(B_1)\cap \mathcal F_1\neq\emptyset$ because $\mathcal{AR}\!\!\upharpoonright \!\!B_1\cap \mathcal F_0 = \emptyset$ and $\mathcal X$ is $\mathcal H$--Baire. So (2) holds. This concludes the proof.
\end{proof}

%\begin{jm} Por razones t\'ecnicas, no he podido demostrar que el Teorema \ref{baire-ramsey} es un `si, y solo si'. Es decir, falta ver que ``para todo $\mathcal X\subseteq\mathcal R$, $\mathcal X$ es $\mathcal H$--Ramsey si, y solo si $\mathcal X$ is $\mathcal H$--Baire" implica que $\mathcal H$ es semiselectivo. Esto no hace falta en los resultados siguientes ni en las aplicaciones. Pero `es verdad en todos los ejemplos' as\'i que ser\'ia bueno probarlo.\end{jm}

\begin{rem}
In vitue of Theorem \ref{baire-ramsey}, if $\mathcal H$ is a semiselective coideal then $(\mathcal R,\mathcal H, \leq, r)$ is a Ramsey space. It should be clear that the axioms {\bf A1 - A4} for general Ramsey spaces are satisfied (see Section 4.2 in \cite{Todo}) from the definition of coideal and from the fact that $(\mathcal R, \leq, r)$ satisfies axioms {\bf A1 - A4}  for topological Ramsey spaces. But we are using semiselectivity (and the fact that $\mathcal R$ is closed) instead of asking that $\mathcal H$ be closed. We will get more insight into the forcing notion $(\mathcal H,\leq^*)$ and other related forcing notions in this way. 
\end{rem}

The following is a local version of Theorem 1.6 from \cite{Mijares}, which is an abstract version of Ramsey's Theorem \cite{Ramsey}:

\begin{thm}\label{ramsey2}
Suppose that $\mathcal H\subseteq \mathcal R$ is a semiselective coideal. Then, given a partition $f\colon \mathcal{AR}_2\to \{0,1\}$ and $A\in \mathcal H$, there exists $B\in \mathcal H\!\!\upharpoonright \!\!A$ such that $f$ is constant on $\mathcal{AR}_2(B)$.
\end{thm}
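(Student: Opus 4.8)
The plan is to follow the standard Nash-Williams-style strategy used for the two-dimensional pigeonhole, but relativized to $\mathcal{H}$ using the combinatorial forcing machinery already developed. Observe that $\mathcal{AR}_2(B)$ consists of the length-$2$ approximations of members of $[\emptyset,B]$, so it suffices to stabilize the color $f$ as we pass from $a\in\mathcal{AR}_1(B)$ to its one-step extensions $b\in r_2[a,B]$. First I would fix, for each $i\in\{0,1\}$, the family $\mathcal{F}_i=\{b\in\mathcal{AR}_2 : f(b)=i\}$, so that $\mathcal{AR}_2=\mathcal{F}_0\sqcup\mathcal{F}_1$. The idea is to apply the local Galvin-type reasoning one level up: for each $a\in\mathcal{AR}_1\!\!\upharpoonright \!\!A$, I want to find a condition below $A$ on which $f$ is constant on the extensions of $a$, and then diagonalize over all such $a$ to obtain a single $B\in\mathcal{H}\!\!\upharpoonright \!\!A$ that works simultaneously.

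The cleanest route uses {\bf A.4} mod $\mathcal{H}$ (part (c) of the coideal definition) directly at the appropriate level, together with semiselectivity to amalgamate. For each $a\in\mathcal{AR}_1\!\!\upharpoonright \!\!A$ with $\depth_A(a)=n$, apply the $\mathcal{H}$-version of the pigeonhole principle to $\mathcal{O}=\mathcal{F}_0\cap\mathcal{AR}_{|a|+1}$: there exists $B_a\in[\depth_A(a),A]\cap\mathcal{H}$ such that $r_{|a|+1}[a,B_a]\subseteq\mathcal{F}_0$ or $r_{|a|+1}[a,B_a]\subseteq\mathcal{F}_0^c=\mathcal{F}_1$; in either case $f$ is constant, say with value $\varepsilon(a)\in\{0,1\}$, on $r_2[a,B_a]$. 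I would then define
$$\mathcal{D}_a=\{C\in\mathcal H\cap[\depth_A(a),A] : f\restriction r_{|a|+1}[a,C]\text{ is constant}\},$$
and check that each $\mathcal{D}_a$ is dense open in $\mathcal H\cap[\depth_A(a),A]$: density is exactly the pigeonhole conclusion just obtained, and openness (downward closure) is immediate since constancy on $r_{|a|+1}[a,C]$ passes to any $C'\leq C$ with $[a,C']\neq\emptyset$, because $r_{|a|+1}[a,C']\subseteq r_{|a|+1}[a,C]$.

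Next I would invoke semiselectivity to obtain $B'\in\mathcal{H}\!\!\upharpoonright \!\!A$ diagonalizing the collection $(\mathcal{D}_a)_{a\in\mathcal{AR}\upharpoonright A}$, so that for every $a\in\mathcal{AR}_1\!\!\upharpoonright \!\!B'$ the color $f$ is constant on $r_2[a,B']$ with some value $\varepsilon(a)$. This reduces the two-dimensional problem to a one-dimensional one: the map $a\mapsto\varepsilon(a)$ is now a coloring of $\mathcal{AR}_1\!\!\upharpoonright \!\!B'$, which I can stabilize by one more application of the $\mathcal{H}$-pigeonhole (or of Lemma \ref{galvinlocal} applied to $\mathcal{F}=\{a\in\mathcal{AR}_1 : \varepsilon(a)=0\}$) to find $B\in\mathcal{H}\!\!\upharpoonright \!\!B'$ on which $\varepsilon$ is constant. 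On this final $B$, both $\varepsilon(a)$ and the color on each $r_2[a,B]$ are pinned, so $f$ is constant on all of $\mathcal{AR}_2(B)$.

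The step I expect to be the main obstacle is verifying that diagonalizing the families $\mathcal{D}_a$ genuinely yields global constancy rather than constancy that depends on $a$; concretely, one must be careful that when $B'$ diagonalizes $(\mathcal{D}_a)_a$, every relevant length-$1$ approximation $a$ of a member of $[\emptyset,B']$ actually satisfies $[a,B']\subseteq[a,C_a]$ for some $C_a\in\mathcal{D}_a$, so that the constancy witnessed by $C_a$ transfers to $B'$ — this is where the precise form of Definition \ref{diagonal1} and the amalgamation properties of the coideal do the work. The second subtlety is the coherence of depths: the families $\mathcal{D}_a$ are indexed over all $a\in\mathcal{AR}\!\!\upharpoonright \!\!A$, not merely length-$1$ ones, so for $|a|\neq 1$ I would simply set $\mathcal{D}_a=\mathcal H\cap[\depth_A(a),A]$ (trivially dense open), ensuring the hypotheses of semiselectivity are met over the full index set while only the length-$1$ constraints carry content.
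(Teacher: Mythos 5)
Your proposal is correct and follows essentially the same route as the paper: define $\mathcal D_a$ for length-$1$ approximations as the set of conditions on which $f$ is constant on the one-step extensions of $a$ (dense by {\bf A.4} mod $\mathcal H$, trivially open), set $\mathcal D_a$ trivial for other lengths, diagonalize by semiselectivity, and then stabilize the induced coloring of $\mathcal{AR}_1$ by one further application of the pigeonhole. The two subtleties you flag (transfer of constancy through the diagonalization, and padding the index set to all of $\mathcal{AR}\!\!\upharpoonright\!\!A$) are handled exactly as you describe in the paper's argument.
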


\begin{proof} Let $f$ be the partition $\mathcal{AR}_2=\mathcal C_0\cup \mathcal C_1$, and consider $A\in \mathcal H$. Define
$$\mathcal D_a=\{B\in [\depth_A(a),A]\cap\mathcal H\colon f {\mbox { is constant on }}r_2[a,B]\}$$
if $a\in \mathcal{AR}_1\!\!\upharpoonright \!\!A$ and $\mathcal D_a=\mathcal H\cap[\depth_A(a),A]$, otherwise. Using {\bf A.4} mod $\mathcal{H}$ in the case $a\in \mathcal{AR}_1\!\!\upharpoonright \!\!A$, it is easy to prove that each $\mathcal{D}_a$ is dense open in $\mathcal H\cap[\depth_A(a),A]$. By semiselectivity,  there exists $B_1\in \mathcal H\!\!\upharpoonright \!\!A$ which diagonalizes  the collection $(\mathcal D_a)_{a\in\mathcal{AR} \upharpoonright A}$. Notice that for every $a\in \mathcal{AR}_1\!\!\upharpoonright \!\!B_1$, there exists $i_a\in \{0,1\}$ such that $r_2[a,B_1]\subseteq \mathcal C_{i_a}$. Now, consider the partition $g\colon \mathcal{AR}_1\to \{0,1\}$ defined by $g(a)=i_a$ if $a\in \mathcal{AR}_1\!\!\upharpoonright \!\!B_1$. By {\bf A.4} mod $\mathcal H$ there exists $B\in \mathcal H\cap [0,B_1]$ such that $g$ is constant on $r_1[0,B]=\mathcal{AR}_1(B)$. But $B\leq B_1\leq A$, so $B$ is as required.
\end{proof}

\begin{defn}\label{ramsey coideal}
A coideal $\mathcal H\subseteq \mathcal R$ is Ramsey if  for every integer $n\geq 2$, every partition $f\colon \mathcal{AR}_n\to \{0,1\}$ and $A\in \mathcal H$, there exists $B\in \mathcal H\!\!\upharpoonright \!\!A$ such that $f$ is constant on $\mathcal{AR}_n(B)$.
\end{defn}

Proceeding in a similar way, using induction, we can prove the following generalization of Theorem \ref{ramsey2}:

\begin{thm}\label{ramsey3}
Every semiselective coideal $\mathcal H\subseteq \mathcal R$ is Ramsey.
\end{thm}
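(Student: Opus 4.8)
The plan is to argue by induction on $n\geq 2$, with the base case $n=2$ being precisely Theorem \ref{ramsey2}. Fix $n\geq 2$, assume the conclusion of Definition \ref{ramsey coideal} holds for every partition of $\mathcal{AR}_n$ (that is, $\mathcal H$ is Ramsey for $n$), and let $f\colon \mathcal{AR}_{n+1}\to\{0,1\}$, say $\mathcal{AR}_{n+1}=\mathcal C_0\cup\mathcal C_1$, together with $A\in\mathcal H$ be given. The overall strategy mirrors the proof of Theorem \ref{ramsey2}: first diagonalize so that the value of $f$ on any $(n+1)$-approximation depends only on the stem of length $n$, thereby collapsing $f$ to a genuine two-colouring $g$ of $\mathcal{AR}_n$, and then invoke the inductive hypothesis on $g$.

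Concretely, for $a\in\mathcal{AR}_n\!\!\upharpoonright\!\!A$ I would set $\mathcal D_a=\{B\in[\depth_A(a),A]\cap\mathcal H : f\text{ is constant on }r_{n+1}[a,B]\}$, and $\mathcal D_a=\mathcal H\cap[\depth_A(a),A]$ for all other $a$. Density of $\mathcal D_a$ (for $a\in\mathcal{AR}_n\!\!\upharpoonright\!\!A$) follows from {\bf A.4} mod $\mathcal H$ applied to the colour class $\mathcal O=\mathcal C_0\cap\mathcal{AR}_{|a|+1}$, and openness is immediate since $r_{n+1}[a,B']\subseteq r_{n+1}[a,B]$ whenever $B'\leq B$; both verifications are routine, exactly as in Theorem \ref{ramsey2}. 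By semiselectivity, choose $B_1\in\mathcal H\!\!\upharpoonright\!\!A$ diagonalizing $(\mathcal D_a)_{a\in\mathcal{AR}\upharpoonright A}$. Then for every $a\in\mathcal{AR}_n\!\!\upharpoonright\!\!B_1$ there is $i_a\in\{0,1\}$ with $r_{n+1}[a,B_1]\subseteq\mathcal C_{i_a}$, so I can define $g\colon\mathcal{AR}_n\to\{0,1\}$ by $g(a)=i_a$ on $\mathcal{AR}_n\!\!\upharpoonright\!\!B_1$ (and arbitrarily elsewhere). Applying the inductive hypothesis to $g$ and $B_1\in\mathcal H$ yields $B\in\mathcal H\!\!\upharpoonright\!\!B_1$ on which $g$ is constant, say $g\equiv j$ on $\mathcal{AR}_n(B)$; since $B\leq B_1\leq A$ and $B\in\mathcal H$, we have $B\in\mathcal H\!\!\upharpoonright\!\!A$.

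The remaining step, which I expect to be the main point requiring care, is to propagate the constancy of $g$ on $\mathcal{AR}_n(B)$ back to constancy of $f$ on $\mathcal{AR}_{n+1}(B)$. Given $c\in\mathcal{AR}_{n+1}(B)=r_{n+1}[\emptyset,B]$, write $c=r_{n+1}(C)$ with $C\leq B$ and set $a=r_n(C)$; then $a\sqsubseteq c$, $a\in\mathcal{AR}_n(B)$, so $g(a)=j$, while $C\in[a,B_1]$ gives $c=r_{n+1}(C)\in r_{n+1}[a,B_1]\subseteq\mathcal C_{i_a}=\mathcal C_j$, whence $f(c)=j$. Thus $f$ is constant on $\mathcal{AR}_{n+1}(B)$, completing the induction. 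The delicate part is precisely this bookkeeping: one must keep the final $B$ below $B_1$ so that the first-round information $r_{n+1}[a,B_1]\subseteq\mathcal C_{i_a}$ is not lost, and correctly match the stem $a=r_n(C)$ of each $(n+1)$-approximation $c$ with the colouring $g$, using the definitions of $\mathcal{AR}_n(B)$ and $\sqsubseteq$; everything else reduces to the density/openness verifications and the semiselective diagonalization already used above.
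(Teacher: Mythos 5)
Your proof is correct and follows exactly the route the paper intends: the paper only states that Theorem \ref{ramsey3} is proved ``in a similar way, using induction'' from Theorem \ref{ramsey2}, and your argument—diagonalizing with $\mathcal D_a$ for $a\in\mathcal{AR}_n\!\!\upharpoonright\!\!A$ via {\bf A.4} mod $\mathcal H$ and semiselectivity to collapse $f$ to a colouring $g$ of $\mathcal{AR}_n$, then invoking the inductive hypothesis and checking that constancy of $g$ on $\mathcal{AR}_n(B)$ with $B\leq B_1$ yields constancy of $f$ on $\mathcal{AR}_{n+1}(B)$—is precisely that induction, carried out carefully.
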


%Theorem \ref{ramsey3} will be useful in Section \ref{semisele forcing} (see Lemma \ref{selective u}), in which we will study the relationship between semiselectivity and forcing.

\section{The Souslin operation}\label{souslinop}

\begin{defn} \label{souslin}
The result of applying the \textbf{Souslin operation} to a family $(\mathcal{X}_a)_{a\in\mathcal{AR}}$ of subsets of $\mathcal{R}$ is:
$$\bigcup_{A\in\mathcal{R}}\bigcap_{n\in\mathbb{N}}\mathcal{X}_{r_n(A)}$$
\end{defn} 

The goal of this section is to show that the family of $\mathcal H$--Ramsey subsets of $\mathcal R$ is closed under the Souslin operation when $\mathcal H$ is a semiselective coideal. 

\begin{lem}\label{sigmaideal}
If $\mathcal H\subseteq\mathcal R$ is a semiselective coideal of $\mathcal{R}$ then the families of $\mathcal H$--Ramsey and $\mathcal H$--Ramsey null subsets of $\mathcal R$ are closed under countable unions.
\end{lem}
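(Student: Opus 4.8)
The plan is to prove the two closure statements separately, treating the $\mathcal H$--Ramsey null case as the technical core and reducing the $\mathcal H$--Ramsey case to it by means of Theorem \ref{baire-ramsey}. As in the proofs of Theorems \ref{abiertos} and \ref{baire-ramsey}, I would argue for a fixed neighborhood $[a,A]$ with $A\in\mathcal H$ and assume without loss of generality that $a=\emptyset$.

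For the $\mathcal H$--Ramsey null family I would take $\mathcal X=\bigcup_n\mathcal X_n$ with each $\mathcal X_n$ $\mathcal H$--Ramsey null and aim to produce $B\in[\emptyset,A]\cap\mathcal H$ with $[\emptyset,B]\cap\mathcal X=\emptyset$. The device is a single diagonalization that simultaneously defeats all the $\mathcal X_n$, obtained by indexing the dense open sets by length. Concretely, for $b\in\mathcal{AR}\upharpoonright A$ I would set
\[
\mathcal D_b=\{C\in\mathcal H\cap[\depth_A(b),A]\colon [b,C]\cap\mathcal X_{|b|}=\emptyset\}.
\]
Openness is immediate, since $C'\leq C$ gives $[b,C']\subseteq[b,C]$; density I would get by applying the $\mathcal H$--Ramsey nullity of $\mathcal X_{|b|}$ inside $[b,E]$ and then pulling the resulting witness back into $[\depth_A(b),A]$ using the amalgamation property A.3 mod $\mathcal H$ of the coideal. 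Semiselectivity (Definition \ref{semiselective}) then yields $B\in\mathcal H\upharpoonright A$ diagonalizing $(\mathcal D_b)_b$, so that $[b,B]\cap\mathcal X_{|b|}=\emptyset$ for every $b\in\mathcal{AR}\upharpoonright B$. The key observation is that this catches every element of $\mathcal X$ at the appropriate approximation: if some $C\in[\emptyset,B]$ lay in $\mathcal X_n$, then $b=r_n(C)$ would satisfy $|b|=n$ and $C\in[b,B]\cap\mathcal X_n=\emptyset$, a contradiction. Hence $[\emptyset,B]\cap\mathcal X=\emptyset$, as needed.

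For the $\mathcal H$--Ramsey family I would let each $\mathcal X_n$ be $\mathcal H$--Ramsey and put $\mathcal X=\bigcup_n\mathcal X_n$. By Theorem \ref{baire-ramsey} it is enough to show $\mathcal X$ is $\mathcal H$--Baire, so I would fix $[\emptyset,A]$ with $A\in\mathcal H$ and split into two cases. If some subneighborhood $[b,B]\subseteq[\emptyset,A]$ with $B\in\mathcal H$ satisfies $[b,B]\subseteq\mathcal X_n$ for some $n$, then $[b,B]\subseteq\mathcal X$ and the first Baire alternative holds. Otherwise no subneighborhood of $[\emptyset,A]$ lies in any $\mathcal X_n$; applying the $\mathcal H$--Ramsey property of each $\mathcal X_n$ inside an arbitrary subneighborhood and discarding the now impossible inclusion alternative shows that each $\mathcal X_n$ is $\mathcal H$--Ramsey null relative to $[\emptyset,A]$. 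The diagonalization of the first part, run within $[\emptyset,A]$, then gives $B\in[\emptyset,A]\cap\mathcal H$ with $[\emptyset,B]\cap\mathcal X=\emptyset$, i.e. $[\emptyset,B]\subseteq\mathcal X^c$, which is the second Baire alternative. Thus $\mathcal X$ is $\mathcal H$--Baire, hence $\mathcal H$--Ramsey by Theorem \ref{baire-ramsey}.

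The hard part will be the fusion in the first step: one must arrange the dense open sets $\mathcal D_b$ so that the single diagonalization furnished by semiselectivity simultaneously kills all countably many $\mathcal X_n$, which is exactly what the length--indexing achieves, and one must verify density, where A.3 mod $\mathcal H$ is needed to relocate a witness found below $[b,E]$ back into $[\depth_A(b),A]$. Once the $\mathcal H$--Ramsey null family is known to be a $\sigma$--ideal, the $\mathcal H$--Ramsey case reduces to a clean dichotomy resting on Theorem \ref{baire-ramsey}, with no further combinatorics required.
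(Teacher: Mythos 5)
Your proposal is correct and follows essentially the same route as the paper: a single semiselective diagonalization of length-indexed dense open sets disposes of the $\mathcal H$--Ramsey null case, and the $\mathcal H$--Ramsey case splits into a trivial alternative and a reduction to the null case. The only cosmetic differences are that the paper's dense sets require $[b,C]\cap\mathcal X_n=\emptyset$ for all $n\le|b|$ rather than just $n=|b|$, and the paper states the second dichotomy directly instead of routing it through Theorem \ref{baire-ramsey}; both variants work.
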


\begin{proof} Fix $[a,A]$ with $A\in\mathcal H$. We will suppose that $a=\emptyset$ without a loss of generality. Let $(\mathcal X_n)_{n\in\mathbb{N}}$ be a sequence of $\mathcal H$--Ramsey null subsets of $\mathcal R$. Define for $a\in\mathcal{AR} \!\!\upharpoonright \!\!A$
$$\mathcal D_a=\{B\in\mathcal H\cap [a,A]\colon [a,B]\subseteq \mathcal X_n^c \ \ \forall n \leq |a| \}$$
Every $\mathcal D_a$ is dense open in $\mathcal H\cap [a,A]$, so let $B\in\mathcal H\!\!\upharpoonright \!\!A$ be a diagonalization of $(\mathcal D_a)_a$. Then $[\emptyset, B]\subseteq \bigcap_n \mathcal X_n^c$. Thus, $\bigcup_n \mathcal X_n$ is $\mathcal H$--Ramsey null. Now, suppose that $(\mathcal X_n)_{n\in\mathbb{N}}$ is a sequence of $\mathcal H$--Ramsey subsets of $\mathcal R$. If there exists $B\in\mathcal H\!\!\upharpoonright \!\!A$ such that $[\emptyset,B]\subseteq \mathcal X_n$ for some $n$, we are done. Otherwise, using an argument similar to the one above, we prove that $\bigcup_n \mathcal X_n$ is $\mathcal H$--Ramsey null.
\end{proof}Let

\begin{equation}
Exp(\mathcal H)=\{[n,A]\colon n\in \mathbb{N}, A\in \mathcal H\}.
\end{equation}

\begin{defn}
We say $\mathcal{X}\subseteq \mathcal{R}$ is $Exp(\mathcal{H})$--\textbf{nowhere dense} if every member of $Exp(\mathcal{H})$ has a subset in $Exp(\mathcal{H})$ that is disjoint from $\mathcal{X}$.
\end{defn}

Notice that every $\mathcal{H}$--Ramsey null set is $Exp(\mathcal{H})$--nowhere dense. And every $Exp(\mathcal{H})$--nowhere dense is $\mathcal{H}$--meager. Thus, if $\mathcal{H}$ is semiselective, every $Exp(\mathcal{H})$--nowhere dense is $\mathcal{H}$--Ramsey. But if $\mathcal{X}$ is both $Exp(\mathcal{H})$--nowhere dense and $\mathcal{H}$--Ramsey, it has to be $\mathcal{H}$--Ramsey null. As a consequence of Lemma \ref{sigmaideal} and Theorem \ref{baire-ramsey} we have:

\begin{coro}
If $\mathcal H\subseteq\mathcal R$ is a semiselective coideal of $\mathcal{R}$ and $\mathcal{X}\subseteq \mathcal{R}$ then the following are equivalent:
\begin{enumerate}
\item $\mathcal{X}$ is $\mathcal{H}$--Ramsey null
\item $\mathcal{X}$ is $Exp(\mathcal{H})$--nowhere dense.
\item $\mathcal{X}$ is $Exp(\mathcal{H})$--meager (i. e. countable union of $Exp(\mathcal{H})$--nowhere dense sets).
\item $\mathcal{X}$ is $\mathcal{H}$--meager.
\end{enumerate}
\end{coro}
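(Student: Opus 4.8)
The plan is to prove the four conditions equivalent by running the implications $(1)\Rightarrow(2)\Rightarrow(3)\Rightarrow(1)$ together with a separate equivalence $(1)\Leftrightarrow(4)$, letting Theorem \ref{baire-ramsey} and the $\sigma$-ideal property from Lemma \ref{sigmaideal} do most of the work. Two of the links are already isolated in the paragraph preceding the statement: every $\mathcal H$--Ramsey null set is $Exp(\mathcal H)$--nowhere dense, and every $Exp(\mathcal H)$--nowhere dense set is $\mathcal H$--meager. The remaining content is to close these into a genuine cycle, and the real work concentrates in passing from a statement about members of $Exp(\mathcal H)$ (neighborhoods $[n,A]$ whose stem is an actual approximation of the top element $A$) to a statement about arbitrary Ellentuck neighborhoods $[a,A]$.

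First I would record the core equivalence $(1)\Leftrightarrow(4)$. The implication $(1)\Rightarrow(4)$ is immediate: if $\mathcal X$ is $\mathcal H$--Ramsey null then for each nonempty $[a,A]$ with $A\in\mathcal H$ the witness $B\in[a,A]\cap\mathcal H$ with $[a,B]\subseteq\mathcal X^c$ already provides a nonempty $[a,B]\subseteq[a,A]$ inside $\mathcal X^c$, so $\mathcal X$ is $\mathcal H$--meager. For $(4)\Rightarrow(1)$ note that $\mathcal H$--meager trivially implies $\mathcal H$--Baire, so by Theorem \ref{baire-ramsey} (using semiselectivity) $\mathcal X$ is $\mathcal H$--Ramsey; given $[a,A]$ choose $B\in[a,A]\cap\mathcal H$ with $[a,B]\subseteq\mathcal X$ or $[a,B]\subseteq\mathcal X^c$. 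The first alternative is impossible, since applying $\mathcal H$--meagerness to the nonempty $[a,B]$ would yield a nonempty $[c,D]\subseteq[a,B]\subseteq\mathcal X$ with $D\in\mathcal H$ and $[c,D]\subseteq\mathcal X^c$, which is absurd. Hence $[a,B]\subseteq\mathcal X^c$ and $\mathcal X$ is $\mathcal H$--Ramsey null. The implication $(1)\Rightarrow(2)$ is then a direct transfer: given $[n,A]\in Exp(\mathcal H)$, put $a=r_n(A)$ and use $\mathcal H$--Ramsey nullness to get $B\in[a,A]\cap\mathcal H$ with $[a,B]\cap\mathcal X=\emptyset$; since $a\sqsubseteq B$ we have $[a,B]=[m,B]\in Exp(\mathcal H)$ for the appropriate $m$, and $[m,B]\subseteq[n,A]$. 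Finally $(2)\Rightarrow(3)$ is trivial, a single nowhere dense set being a one-term countable union.

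The crux is $(3)\Rightarrow(1)$, which I would obtain by first proving $(2)\Rightarrow(1)$ and then invoking the closure of the $\mathcal H$--Ramsey null ideal under countable unions from Lemma \ref{sigmaideal}: writing an $Exp(\mathcal H)$--meager set as $\bigcup_n\mathcal Y_n$ with each $\mathcal Y_n$ being $Exp(\mathcal H)$--nowhere dense, each $\mathcal Y_n$ is $\mathcal H$--Ramsey null by $(2)\Rightarrow(1)$, whence so is the union. To establish $(2)\Rightarrow(1)$ I would route it through $(2)\Rightarrow(4)\Rightarrow(1)$, the second arrow already being available. For $(2)\Rightarrow(4)$, given a nonempty $[a,A]$ with $A\in\mathcal H$, I need a member of $Exp(\mathcal H)$ sitting inside $[a,A]$ so that the $Exp(\mathcal H)$--nowhere dense hypothesis can be applied; concretely it suffices to find $C\in[a,A]\cap\mathcal H$, for then $[a,C]=[|a|,C]\in Exp(\mathcal H)$, and an $Exp(\mathcal H)$--nowhere dense subset of $[a,C]$ disjoint from $\mathcal X$ is the desired $\mathcal H$--meager witness. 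This is the main obstacle: $Exp(\mathcal H)$ only records neighborhoods whose stem is an approximation of the top element, whereas a general neighborhood has an arbitrary stem $a$, and one must show that every nonempty $[a,A]$ with $A\in\mathcal H$ actually meets $\mathcal H$. I expect this to follow from the coideal axioms (the $\mathbf{A3}\bmod\mathcal H$ amalgamation clause), as the abstract form of the elementary gluing argument that, in Ellentuck's space, deletes from $A\in\mathcal H$ the finitely many coordinates below the stem (staying in $\mathcal H$) and prepends $a$ (passing to a superset, again in $\mathcal H$). Granting this transfer fact, the cycle $(1)\Rightarrow(2)\Rightarrow(3)\Rightarrow(1)$ closes and, together with $(1)\Leftrightarrow(4)$, yields the full equivalence.
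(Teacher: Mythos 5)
Your proposal is correct and follows essentially the same route as the paper: the preceding remarks give $(1)\Rightarrow(2)\Rightarrow(4)$, Theorem \ref{baire-ramsey} upgrades $\mathcal H$--meager to $\mathcal H$--Ramsey (hence, combined with meagerness, to $\mathcal H$--Ramsey null), and Lemma \ref{sigmaideal} handles the countable union in $(3)\Rightarrow(1)$. The only difference is that you explicitly isolate, as a needed transfer fact, that every nonempty $[a,A]$ with $A\in\mathcal H$ contains a member of $Exp(\mathcal H)$ (equivalently meets $\mathcal H$); the paper takes this for granted in asserting that $Exp(\mathcal H)$--nowhere dense implies $\mathcal H$--meager, so flagging it is if anything a point in your favor.
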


\qed

Given a set $X$, say that two subsets $A,B$ of $X$ are ``\emph{compatible}'' with respect to a family $\mathcal F$ of subsets of $X$ if there exists $C\in \mathcal F$ such that $C\subseteq A\cap B$. And $\mathcal F$ is \emph{M-like} if for $\mathcal G\subseteq \mathcal F$ with $|\mathcal G|<|\mathcal F|$, every member of $\mathcal F$ which is not compatible with any member of $\mathcal G$ is compatible with $X\setminus \bigcup \mathcal G$. A $\sigma$-algebra $\mathcal A$ of subsets of $X$ together with a $\sigma$-ideal $\mathcal A_0\subseteq\mathcal A$ is a \emph{Marczewski pair} if for every $A\subseteq X$ there exists $\Phi(A)\in \mathcal A$ such that $A\subseteq \Phi(A)$ and for every $B\subseteq \Phi(A)\setminus A$, $B\in\mathcal A\Rightarrow B\in \mathcal A_0$. The following is a well known fact:

\begin{thm}[Marczewski]\label{marcz}
Every $\sigma$-algebra of sets which together with a $\sigma$-ideal is a Marczeswki pair, is closed under the Souslin operation.
\end{thm}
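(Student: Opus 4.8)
The plan is to prove the classical form of the theorem: the Souslin operation applied to a scheme $(A_s)_{s\in\mathbb{N}^{<\mathbb{N}}}$ of members of $\mathcal{A}$, indexed by finite sequences of natural numbers, produces the set $P:=\bigcup_{f\in\mathbb{N}^{\mathbb{N}}}\bigcap_{n}A_{f\restriction n}$, and the goal is to show $P\in\mathcal{A}$. First I would normalize the scheme to be monotone, replacing each $A_s$ by $\bigcap_{k\le|s|}A_{s\restriction k}$; this leaves $P$ unchanged and guarantees $A_t\subseteq A_s$ whenever $s\sqsubseteq t$. For each $s$ I set $P_s:=\bigcup\{\bigcap_{n}A_{f\restriction n}:f\in\mathbb{N}^{\mathbb{N}},\ s\sqsubseteq f\}$, so that $P=P_\emptyset$, $P_s=\bigcup_{i}P_{s^{\frown}i}$, and $P_s\subseteq A_s$.

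The heart of the argument is to attach to each $P_s$ a measurable hull $H_s$ in a coherent way. Using $\Phi$ from the Marczewski pair, I would define the $H_s$ by recursion on $|s|$: put $H_\emptyset:=\Phi(P)$ and $H_{s^{\frown}i}:=\Phi(P_{s^{\frown}i})\cap A_{s^{\frown}i}\cap H_s$. A short lemma, namely that if $H\in\mathcal{A}$ is a hull of $A$ (meaning every member of $\mathcal{A}$ contained in $H\setminus A$ lies in $\mathcal{A}_0$) and $M\in\mathcal{A}$ satisfies $A\subseteq M$, then $H\cap M$ is again a hull of $A$, shows that each $H_s$ is a hull of $P_s$; moreover $H_{s^{\frown}i}\subseteq A_{s^{\frown}i}$ and $H_{s^{\frown}i}\subseteq H_s$, so the tree of hulls is decreasing along branches and $H_s\subseteq H_\emptyset$ for every $s$. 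I then form the error sets $D_s:=H_s\setminus\bigcup_i H_{s^{\frown}i}\in\mathcal{A}$. Since $P_s=\bigcup_iP_{s^{\frown}i}\subseteq\bigcup_iH_{s^{\frown}i}$, we obtain $D_s\subseteq H_s\setminus P_s$, so by the hull property $D_s\in\mathcal{A}_0$; hence $E:=\bigcup_{s}D_s\in\mathcal{A}_0$ as a countable union, and $E\subseteq H_\emptyset$.

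To finish I would establish the sandwich $H_\emptyset\setminus E\subseteq P\subseteq H_\emptyset$. The right inclusion is immediate from $P\subseteq\Phi(P)=H_\emptyset$. For the left inclusion, given $x\in H_\emptyset\setminus E$, I build a branch $f\in\mathbb{N}^{\mathbb{N}}$ recursively: from $x\in H_{f\restriction n}$ and $x\notin D_{f\restriction n}$ it follows that $x\in\bigcup_i H_{(f\restriction n)^{\frown}i}$, so some coordinate keeps $x$ inside the tree, and then $x\in\bigcap_n H_{f\restriction n}\subseteq\bigcap_n A_{f\restriction n}\subseteq P$. Consequently $P=(H_\emptyset\setminus E)\cup(P\cap E)$, where $H_\emptyset\setminus E\in\mathcal{A}$ and $P\cap E\subseteq E\in\mathcal{A}_0$, so $P\cap E\in\mathcal{A}_0\subseteq\mathcal{A}$ because $\mathcal{A}_0$ is a $\sigma$-ideal; therefore $P\in\mathcal{A}$. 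The main obstacle is the bookkeeping of the hulls: one must secure the three properties (hull of $P_s$, contained in $A_s$, decreasing in $s$) simultaneously, which is exactly what the intersection-stability lemma for hulls together with the monotonicity normalization are designed to provide; the branch construction and the containment $D_s\subseteq H_s\setminus P_s$ are then routine.
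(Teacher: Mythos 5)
Your argument is the standard proof of Marczewski's theorem and it is correct: the monotone normalization of the scheme, the recursively defined hulls $H_s=\Phi(P_s)\cap A_s\cap H_{s^-}$ stabilized by the intersection lemma, the null error sets $D_s$, and the sandwich $H_\emptyset\setminus E\subseteq P\subseteq H_\emptyset$ are exactly the classical steps. The paper offers no proof to compare against --- it states the result as a well-known fact and gives only an end-of-proof marker --- so your write-up would simply supply the omitted argument. Two small points worth making explicit: the last step ($P\cap E\in\mathcal A_0$) uses that the $\sigma$-ideal is hereditary under arbitrary subsets, which is harmless here since subsets of $\mathcal H$--Ramsey null sets are $\mathcal H$--Ramsey null; and to apply the classical statement to the paper's Souslin operation (Definition 5.1, indexed by $\mathcal{AR}$ with branches ranging over $\mathcal R$) one uses that $\mathcal{AR}$ is countable and $\mathcal R$ is metrically closed, so that operation is subsumed by the $\mathbb N^{<\mathbb N}$-indexed one you prove.
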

\qed

Let $\mathcal H$ be a semiselective coideal of $\mathcal{R}$. Then we have:

\begin{prop}\label{mlike}
The family $Exp(\mathcal H)$ is $M$-like.
\end{prop}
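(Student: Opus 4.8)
The goal is to show that $Exp(\mathcal{H})$ is $M$-like. Unpacking the definition with $X = \mathcal{R}$ and $\mathcal{F} = Exp(\mathcal{H})$: I must take a subfamily $\mathcal{G} \subseteq Exp(\mathcal{H})$ with $|\mathcal{G}| < |Exp(\mathcal{H})|$, and a member $[n,A] \in Exp(\mathcal{H})$ (with $A \in \mathcal{H}$) that is incompatible with every member of $\mathcal{G}$, and produce a member of $Exp(\mathcal{H})$ contained in $[n,A] \cap (\mathcal{R} \setminus \bigcup \mathcal{G})$, i.e. a set $[m,B]$ with $B \in \mathcal{H}$, $[m,B] \subseteq [n,A]$, and $[m,B] \cap \bigcup\mathcal{G} = \emptyset$. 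Here ``compatible'' means sharing a common subset lying in $Exp(\mathcal{H})$.

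The plan is to first translate incompatibility into a diagonalization problem solvable by semiselectivity. For each $[k,C] \in \mathcal{G}$, incompatibility of $[n,A]$ with $[k,C]$ means there is no $[\ell,E] \in Exp(\mathcal{H})$ inside $[n,A] \cap [k,C]$; by the coideal axioms (A.3 and A.4 mod $\mathcal{H}$) this should force that $[n,A] \cap [k,C]$ is, in a local sense, $Exp(\mathcal{H})$--nowhere dense, so that each $g \in \mathcal{G}$ contributes a ``thin'' set that we can avoid along a dense-open family of conditions. Concretely, for each $a \in \mathcal{AR} \upharpoonright A$ I would set $\mathcal{D}_a$ to be the collection of $D \in \mathcal{H} \cap [\depth_A(a),A]$ such that $[a,D]$ avoids $\bigcup\mathcal{G}$ as much as incompatibility permits — using that incompatibility with each individual $g$ lets us find, by density, a condition below avoiding $g$. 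The main work is verifying that each $\mathcal{D}_a$ is dense open in $\mathcal{H} \cap [\depth_A(a),A]$, after which semiselectivity (Definition \ref{semiselective}) yields a diagonalization $B \le A$ in $\mathcal{H}$, and the diagonalizing $[m,B] \subseteq [n,A]$ avoids $\bigcup\mathcal{G}$, as required.

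The cardinality hypothesis $|\mathcal{G}| < |Exp(\mathcal{H})|$ is what I expect to be the crux, and it is the main obstacle. Avoiding one incompatible condition at a time is handled by a single application of density; avoiding $\bigcup\mathcal{G}$ simultaneously requires that the diagonalization, which is a countable/combinatorial fusion over $\mathcal{AR} \upharpoonright A$, can absorb all of $\mathcal{G}$. Since each member of $Exp(\mathcal{H})$ is determined by a pair $(n,A)$ and the relevant combinatorics at each finite approximation level is governed by the finitely-branching structure from (A.2.2), the ``small'' subfamily $\mathcal{G}$ cannot cover a full Ellentuck neighborhood $[m,B]$; the precise counting — that fewer than $|Exp(\mathcal{H})|$ nowhere-dense pieces leave a surviving neighborhood inside $\mathcal{H}$ — is where I would spend the most care. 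I would therefore argue that the union of the incompatible pieces is $Exp(\mathcal{H})$--nowhere dense (hence, by the Corollary above, $\mathcal{H}$--Ramsey null and $Exp(\mathcal{H})$--meager), and then use that $\mathcal{H}$--Ramsey null sets cannot exhaust $[n,A]$, so a surviving $[m,B] \in Exp(\mathcal{H})$ disjoint from $\bigcup\mathcal{G}$ exists.

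A cleaner alternative route, which I would pursue in parallel, is to avoid an explicit fusion: show directly that incompatibility of $[n,A]$ with all of $\mathcal{G}$ implies $[n,A] \cap \bigcup\mathcal{G}$ is $\mathcal{H}$--Ramsey null (each $[n,A] \cap g$ is nowhere dense by incompatibility, and Lemma \ref{sigmaideal} together with the cardinality bound controls the union), and then invoke that $\mathcal{H}$--Ramsey null sets omit some neighborhood — giving $B \in \mathcal{H}\upharpoonright A$ with $[m,B] \subseteq [n,A] \setminus \bigcup\mathcal{G} \subseteq \mathcal{R}\setminus\bigcup\mathcal{G}$. This packages the obstacle into the already-proved $\sigma$-ideal behavior of the $\mathcal{H}$--Ramsey null sets, reducing the problem to the counting step, which is the genuinely new ingredient here.
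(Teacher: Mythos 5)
Your overall strategy---in particular the ``cleaner alternative route'' of your final paragraph---is essentially the paper's own proof. The paper argues that $\bigcup\mathcal{B}$ is $\mathcal{H}$--Ramsey (invoking Lemmas \ref{galvinlocal2} and \ref{sigmaideal} together with the bound $|\mathcal{B}|<2^{\aleph_0}$), hence $\mathcal{H}$--Baire by Theorem \ref{baire-ramsey}; it then takes a homogeneous $[b,B]\subseteq[a,A]$ with $B\in\mathcal{H}$, rules out the alternative $[b,B]\subseteq\bigcup\mathcal{B}$ using the incompatibility hypothesis, and concludes that $[b,B]\subseteq\mathcal{R}\setminus\bigcup\mathcal{B}$ witnesses compatibility with the complement. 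Your plan of showing that $[n,A]\cap\bigcup\mathcal{G}$ is $\mathcal{H}$--Ramsey null and then extracting a surviving neighborhood is the same argument in slightly different dress; your first route (an explicit diagonalization via semiselectivity) is also morally the same, since Lemma \ref{sigmaideal} is itself proved by such a fusion.

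The difficulty is that you have deferred exactly the step that carries the proposition, and you say so yourself. Lemma \ref{sigmaideal} gives closure of the $\mathcal{H}$--Ramsey and $\mathcal{H}$--Ramsey null families under \emph{countable} unions only, whereas the hypothesis is merely $|\mathcal{G}|<|Exp(\mathcal{H})|=2^{\aleph_0}$; if the continuum exceeds $\aleph_1$, the family $\mathcal{G}$ may be uncountable, and ``Lemma \ref{sigmaideal} together with the cardinality bound'' does not by itself control $\bigcup\mathcal{G}$. Likewise, your preliminary claim that incompatibility of $[n,A]$ with a single $[k,C]$ forces $[n,A]\cap[k,C]$ to be $Exp(\mathcal{H})$--nowhere dense is asserted (``this should force'') rather than argued; passing from ``$[n,A]\cap[k,C]$ contains no member of $Exp(\mathcal{H})$'' to ``below every condition inside $[n,A]$ there is a condition disjoint from $[k,C]$'' requires an actual use of {\bf A3}/{\bf A4} mod $\mathcal{H}$. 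Until you supply the counting argument for unions of fewer than $2^{\aleph_0}$ nowhere dense pieces (or restrict to countable $\mathcal{G}$, where Lemma \ref{sigmaideal} suffices), what you have is a correct plan rather than a proof. In fairness, the paper's own write-up is terse on both of these points, but they are precisely where the content of the proposition lies.
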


\begin{proof} Consider $\mathcal B\subseteq Exp(\mathcal H)$ with $|\mathcal B|<|Exp(\mathcal H)|=2^{\aleph_0}$ and suppose that $[a,A]$ is not compatible with any member of $\mathcal B$, i. e. for every $[b,B]\in \mathcal B$, $[b,B]\cap [a,A]$ does not contain any member of $Exp(\mathcal H)$. We claim that $[a,A]$ is compatible with $\mathcal R\smallsetminus \bigcup \mathcal B$. In fact:

Since $|\mathcal B|<2^{\aleph_0}$, by Lemmas \ref{galvinlocal2} and {sigmaideal}, $\bigcup \mathcal B$ is $\mathcal H$-Ramsey and therefore $\mathcal H$-Baire, by Theorem \ref{baire-ramsey}. So, there exist $[b,B]\subseteq [a,A]$ with $B\in \mathcal H$ such that:
\begin{enumerate}
\item $[b,B]\subseteq \bigcup \mathcal B$ or
\item $[b,B]\subseteq \mathcal R\smallsetminus \bigcup \mathcal B$
\end{enumerate}

(1) is not possible because $[a,A]$ is not compatible with any member of $\mathcal B$. And (2) says that $[a,A]$ is compatible with $\mathcal R\smallsetminus \bigcup \mathcal B$. This completes the proof.
\end{proof}

\medskip

Proposition \ref{mlike} says that the family of $\mathcal H$--Ramsey subsets of $\mathcal R$ together with the family of $\mathcal H$--Ramsey null subsets of $\mathcal R$ is a Marczewski pair. Thus, by theorem \ref{marcz}, we obtain the following:

\begin{thm}\label{souslin}
The family of $\mathcal H$--Ramsey subsets of $\mathcal R$ is closed under the Souslin operation.
\end{thm}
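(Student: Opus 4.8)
The plan is to deduce Theorem \ref{souslin} directly from Marczewski's Theorem \ref{marcz} by exhibiting the relevant Marczewski pair. By Proposition \ref{mlike}, the family $Exp(\mathcal H)$ is $M$-like, and as remarked immediately after its proof, this is precisely what is needed to verify that the pair consisting of the $\sigma$-algebra of $\mathcal H$--Ramsey subsets of $\mathcal R$ together with the $\sigma$-ideal of $\mathcal H$--Ramsey null subsets forms a Marczewski pair. So the first step is to confirm that these two families genuinely constitute a $\sigma$-algebra and a $\sigma$-ideal contained in it.

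For the $\sigma$-algebra structure, I would note that closure under complementation is immediate from the definition of $\mathcal H$--Ramsey (the two alternatives $[a,B]\subseteq\mathcal X$ and $[a,B]\subseteq\mathcal X^c$ are symmetric in $\mathcal X$ and $\mathcal X^c$), and closure under countable unions is exactly Lemma \ref{sigmaideal}. The same lemma gives that the $\mathcal H$--Ramsey null sets are closed under countable unions; that they are downward closed under inclusion is clear from the definition, so they form a $\sigma$-ideal, and every $\mathcal H$--Ramsey null set is trivially $\mathcal H$--Ramsey, giving the inclusion $\mathcal A_0\subseteq\mathcal A$.

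The heart of the argument is producing, for an arbitrary $\mathcal X\subseteq\mathcal R$, a hull $\Phi(\mathcal X)\in\mathcal A$ with $\mathcal X\subseteq\Phi(\mathcal X)$ such that every $\mathcal H$--Ramsey subset of $\Phi(\mathcal X)\setminus\mathcal X$ is $\mathcal H$--Ramsey null. This is where $M$-likeness of $Exp(\mathcal H)$ (Proposition \ref{mlike}) does the work, via the general mechanism of Marczewski's theorem: the $M$-like property is designed precisely to furnish such hulls, so the Marczewski-pair verification reduces to citing Proposition \ref{mlike} together with the characterization of $\mathcal H$--Ramsey null sets as the $Exp(\mathcal H)$--meager (equivalently $Exp(\mathcal H)$--nowhere dense) sets established in the Corollary following Lemma \ref{sigmaideal}. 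Once the pair is confirmed to be a Marczewski pair, Theorem \ref{marcz} yields at once that $\mathcal A$ is closed under the Souslin operation, which is the statement of the theorem.

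The main obstacle I anticipate is not in any single computation but in correctly matching the abstract hypotheses of Marczewski's theorem to the concrete families at hand: one must check carefully that the $M$-likeness of $Exp(\mathcal H)$, which is phrased in terms of compatibility of Ellentuck neighborhoods, translates into the hull condition on $\mathcal A$ and $\mathcal A_0$ phrased in terms of the $\mathcal H$--Ramsey and $\mathcal H$--Ramsey null properties. The bridge is the equivalence between $\mathcal H$--Ramsey null and $Exp(\mathcal H)$--nowhere dense proved in the Corollary, so I would make that identification explicit before invoking Theorem \ref{marcz}. Everything else is bookkeeping that the preceding results have already set up.
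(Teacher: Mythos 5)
Your proposal matches the paper's argument exactly: the paper likewise deduces the theorem by observing that Proposition \ref{mlike} ($M$-likeness of $Exp(\mathcal H)$) makes the $\mathcal H$--Ramsey sets together with the $\mathcal H$--Ramsey null sets a Marczewski pair, and then invokes Theorem \ref{marcz}. If anything, you spell out more of the bookkeeping (the $\sigma$-algebra/$\sigma$-ideal verification via Lemma \ref{sigmaideal} and the bridge through the $Exp(\mathcal H)$--nowhere dense characterization) than the paper does.
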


\qed

\section{Abstract Selectivity}\label{sele}

\begin{defn}Given $A\in \mathcal{H}$ and $\mathcal{A}=(A_a)_{a\in\mathcal{AR}\upharpoonright A}\subseteq \mathcal{H}\!\!\upharpoonright\!\! A$ with $[a,A_a]\neq\emptyset$ for all $a$, we say that $\mathcal{A}$ is \textbf{filtered by} $\leq$ if for every $a$, $b\in\mathcal{AR}\!\!\upharpoonright \!\!A$ there exists $c\in\mathcal{AR}\!\!\upharpoonright \!\!A$ such that $A_c\leq A_a$ and $A_c\leq A_b$.  
\end{defn}

\begin{defn} A coideal $\mathcal H\subseteq\mathcal R$ is \textbf{selective} if given $A\in \mathcal H$, for every $\mathcal{A}=(A_a)_{a\in\mathcal{AR}\upharpoonright A}\subseteq\mathcal{H}\upharpoonright \!\!A$ filtered by $\leq$ such that $[a,A_a]\neq\emptyset$ for all $a$, there exists $B\in\mathcal H\!\!\upharpoonright \!\!A$ which diagonalizes $\mathcal{A}$.
\end{defn}

\begin{lem}\label{filtered}
Given a coideal $\mathcal{H}$ of $\mathcal{R}$ and $A\in\mathcal{H}$, for every $(\mathcal{D}_a)_{a\in\mathcal{AR}\upharpoonright A}$ such that each $\mathcal{D}_a$ is dense open in $\mathcal H\cap[a,A]$ there exists $(A_a)_{a\in\mathcal{AR}\upharpoonright A)}$ filtered by $\leq$ such that $A_a\in\mathcal{D}_a$ for all $a\in\mathcal{AR}\!\!\upharpoonright \!\!A$.  
\end{lem}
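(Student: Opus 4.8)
The plan is to use that $\mathcal{AR}\upharpoonright A$ is countable and to funnel all the chosen representatives along a single $\leq$-decreasing sequence in $\mathcal{H}$, so that directedness is witnessed by deep terms of that sequence. First I would record countability: by (A.2.2) the set $\{b:b\leq_{fin}r_n(A)\}$ is finite for each $n$, and every $a\in\mathcal{AR}\upharpoonright A$ satisfies $a\leq_{fin}r_{\depth_A(a)}(A)$, so there are only finitely many approximations of each depth and $\mathcal{AR}\upharpoonright A$ is countable. Fix an enumeration $\mathcal{AR}\upharpoonright A=\{a_n:n\in\mathbb{N}\}$ that lists approximations in nondecreasing depth, with $a_0=\emptyset$.

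Next I would construct recursively a $\leq$-decreasing sequence $A=C_0\geq C_1\geq\cdots$ in $\mathcal{H}\upharpoonright A$ together with the values $A_{a_n}\in\mathcal{D}_{a_n}$, maintaining that $C_n$ still agrees with $A$ up to level $\depth_A(a_n)$, i.e. $C_n\in[\depth_A(a_n),A]$. At step $n$ the first clause of (\textbf{A3} mod $\mathcal{H}$) gives $[a_n,C_{n-1}]\neq\emptyset$, the second clause amalgamates this into an element of $\mathcal{H}$ refining $[a_n,C_{n-1}]$, and density of $\mathcal{D}_{a_n}$ in $\mathcal{H}\cap[a_n,A]$ then produces $A_{a_n}\in\mathcal{D}_{a_n}$ sitting below $C_{n-1}$; I then take $C_n\leq A_{a_n}$ in $\mathcal{H}$. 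The openness of $\mathcal{D}_{a_n}$ is what makes the single descending sequence serve all the dense sets at once: any later member of $\mathcal{H}\cap[a_n,A]$ lying below $A_{a_n}$ automatically stays in $\mathcal{D}_{a_n}$. To make the tails of the sequence available as members of the family, at cofinally many stages I would pick a fresh deep approximation $c=r_m(C_n)$ (possible since $C_n$ has approximations of every length and only finitely many indices have been used) and set $A_c\in\mathcal{D}_c$ with $A_c\leq C_n$.

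Finally, filtering is read off the construction: given $a_i,a_j$, any later term $C_k$ (with $k\geq i,j$) satisfies $C_k\leq A_{a_i}$ and $C_k\leq A_{a_j}$, and since some such term has been realized as a family member $A_c\leq C_k$, the index $c$ witnesses the condition. The step I expect to be the main obstacle is exactly the compatibility bookkeeping inside the recursion. An arbitrary $a_n$ need not be realizable below the already shrunken $C_{n-1}$, so the family cannot be a naive decreasing chain; the entire weight of the argument lies in enumerating by depth and invoking (\textbf{A3} mod $\mathcal{H}$) so that $C_{n-1}$ still agrees with $A$ up to $\depth_A(a_n)$ precisely when $a_n$ is treated, and in using openness of the $\mathcal{D}_a$ to push every representative below a common tail of one $\mathcal{H}$-sequence. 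Reconciling the ``fully below'' requirement of the filtering relation with the ``past the stem'' nature of amalgamation---for $\sqsubseteq$-incomparable approximations---is the delicate heart of the proof, and is where the coideal axioms for $\mathcal{H}$ must be used most carefully.
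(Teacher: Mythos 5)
Your overall strategy is the one the paper uses: enumerate $\mathcal{AR}\!\!\upharpoonright\!\!A$ by nondecreasing depth (each depth level being finite by (A.2.2)), thread a single $\leq$-decreasing sequence through the sets $\mathcal{D}_a$ using (\textbf{A3} mod $\mathcal{H}$) together with density and openness, and read the filtering condition off the chain. However, the invariant you propose to maintain is contradictory as stated. You ask simultaneously that $C_n\leq A_{a_n}$ and that $C_n\in[\depth_A(a_n),A]$. Already in the Ellentuck space this is impossible whenever $a_n\subsetneq r_k(A)$ for $k=\depth_A(a_n)$: membership in $[k,A]$ forces $r_k(A)\subseteq C_n$, while $A_{a_n}\in[a_n,A]$ forces $A_{a_n}\cap[0,\max a_n]=a_n$, and since $\max r_k(A)\leq\max a_n$ (minimality of depth puts $A(k-1)$ in $a_n$), one gets $r_k(A)\subseteq C_n\subseteq A_{a_n}$ only if $r_k(A)\subseteq a_n$, i.e.\ only if $a_n=r_k(A)$. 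Since your verification of directedness rests on $C_k\leq C_i\leq A_{a_i}$, i.e.\ on exactly this invariant, the final step does not go through as written. The same tension sinks the naive reading of ``$A_{a_{n+1}}$ sitting below $C_n$'': for $\sqsubseteq$-incompatible stems there is in general \emph{no} element of $[a_{n+1},A]$ lying fully below an element of $[a_n,A]$.

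To be fair, the paper's own proof is a three-line sketch that elides precisely the same point (it asserts that $\mathcal{D}_{a_2^k}\!\!\upharpoonright\!\!A^{k,1}$ is nonempty for consecutive, generally $\sqsubseteq$-incompatible approximations of the same depth, which fails literally in the Ellentuck space for the same reason), so you have correctly located where the real work lies; but your write-up, like the paper's, stops short of doing it. The repair is to separate the two roles of the chain: run the master sequence through amalgamated conditions, i.e.\ keep $C_n\in[m_n,A]\cap\mathcal{H}$ with $[a_n,C_n]\subseteq[a_n,A_{a_n}]$ obtained from the second clause of (\textbf{A3} mod $\mathcal{H}$) (so $C_n\leq^{*}A_{a_n}$ rather than $C_n\leq A_{a_n}$), which keeps the first clause of (\textbf{A3} mod $\mathcal{H}$) applicable at the next stage; and then obtain genuine $\leq$-lower bounds only for the directedness witnesses, by choosing $c$ to be an approximation lying entirely inside a sufficiently deep tail of the chain (past the finitely many stems already treated) and taking $A_c$ there, exactly as your ``fresh deep approximation'' device intends. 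With that correction your argument closes the gap that the paper leaves implicit.
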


\begin{proof}
For every $k\in\mathbb{N}$, list 
$$\mathcal{A}_k=\{a_1^1,a_2^1,\dots,a_{n_k}^1\}=\{a\in\mathcal{AR}\!\!\upharpoonright \!\!A\colon depth_A(a)=k\}$$
(every $\mathcal{A}_k$ is finite by {\bf A2}). Since each $\mathcal{D}_a$ is dense open in $\mathcal H\cap[a,A]$, using {\bf A3} mod $\mathcal{H}$) we can choose $A^{1,1}\in\mathcal{D}_{a_1^1}\!\!\upharpoonright\!\!A$, $A^{1,2}\in\mathcal{D}_{a_2^1}\!\!\upharpoonright\!\!A^{1,1}$, $\dots$, $A^{1,n_1}\in\mathcal{D}_{a_{n_1}^1}\!\!\upharpoonright\!\!A^{1,n_1-1}$. Again,  we can choose $A^{2,1}\in\mathcal{D}_{a_1^2}\!\!\upharpoonright\!\!A^{1,n_1}$, $A^{2,2}\in\mathcal{D}_{a_2^2}\!\!\upharpoonright\!\!A^{2,1}$, $\dots$, $A^{2,n_2}\in\mathcal{D}_{a_{n_2}^2}\!\!\upharpoonright\!\!A^{2,n_2-1}$. And so on. Then $(A^{i,j})_{ij}$ is as required.
\end{proof}

\begin{prop} If $\mathcal H\subseteq\mathcal R$ is a selective coideal then $\mathcal H$ is semiselective.
\end{prop}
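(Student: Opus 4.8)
The plan is to show that selectivity implies semiselectivity by reducing the diagonalization of a collection of dense open sets to the diagonalization of a filtered family, which is exactly what Lemma \ref{filtered} provides. Recall that semiselectivity requires, for every $A\in\mathcal H$, every collection $\mathcal D=\{\mathcal D_a\}_{a\in\mathcal{AR}\upharpoonright A}$ with each $\mathcal D_a$ dense open in $\mathcal H\cap[\depth_A(a),A]$, and every $B\in\mathcal H\!\!\upharpoonright\!\!A$, the existence of a $C\in\mathcal H$ with $C\leq B$ diagonalizing $\mathcal D$. Selectivity, on the other hand, guarantees a diagonalization whenever the family $\mathcal A=(A_a)_a$ is \emph{filtered by} $\leq$. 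So the natural strategy is: given the data witnessing the hypothesis of semiselectivity, first produce a filtered family $(A_a)_a$ with $A_a\in\mathcal D_a$ via Lemma \ref{filtered}, then apply selectivity to that family.

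First I would fix $A\in\mathcal H$, a collection $\mathcal D=\{\mathcal D_a\}_a$ as above, and a $B\in\mathcal H\!\!\upharpoonright\!\!A$. To incorporate the requirement that the diagonalization lie below $B$, I would replace $A$ by $B$ from the outset: since $B\in\mathcal H$ and $B\leq A$, the restriction $\mathcal D_a\cap[\depth_B(a),B]$ (equivalently, the dense open sets relativized to $B$) remain dense open in $\mathcal H\cap[\depth_B(a),B]$, using part (a) of the coideal definition and \textbf{A3} mod $\mathcal H$ to see that density is preserved under passing to the smaller $B$. Then I would apply Lemma \ref{filtered} with $B$ in place of $A$ to obtain a family $(A_a)_{a\in\mathcal{AR}\upharpoonright B}$ filtered by $\leq$ with each $A_a\in\mathcal D_a$ and $A_a\leq B$. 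Finally, selectivity applied to this filtered family yields $C\in\mathcal H\!\!\upharpoonright\!\!B$ diagonalizing $(A_a)_a$; by Definition \ref{diagonal2} this $C$ is a diagonalization of $\mathcal D$, and since $C\leq B$ the semiselectivity condition is met.

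The main obstacle I anticipate is the bookkeeping around depths and the precise form of the dense open sets after restricting to $B$: Lemma \ref{filtered} is stated with dense open sets in $\mathcal H\cap[a,A]$, whereas semiselectivity and Definition \ref{diagonal2} phrase things in terms of $\mathcal H\cap[\depth_A(a),A]$, so I would need to check that these notions align (they do, since $[\depth_A(a),A]$ and $[a,A]$ carry the same dense open sets for the purposes of diagonalization via \textbf{A3} mod $\mathcal H$). A second point requiring care is verifying that a diagonalization of a filtered \emph{family} $(A_a)_a$ in the sense of Definition \ref{diagonal1} is genuinely a diagonalization of the \emph{collection} $\mathcal D$ in the sense of Definition \ref{diagonal2}; but this is immediate from the definitions, since Definition \ref{diagonal2} merely asks for the existence of some such family with $A_a\in\mathcal D_a$, which is precisely what we have constructed. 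Everything else is a routine matching of quantifiers, so I expect the proof to be short.
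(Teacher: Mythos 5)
Your proposal is correct and follows essentially the same route as the paper: relativize the dense open sets to $B$, invoke Lemma \ref{filtered} to extract a filtered family $(A_a)_a$ with $A_a\in\mathcal D_a$, and then apply selectivity to diagonalize that family. The only difference is that you explicitly flag the $[a,A]$ versus $[\depth_A(a),A]$ bookkeeping, which the paper's proof glosses over; your resolution of it is the right one.
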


\begin{proof} Consider $A\in\mathcal H$ and let $\mathcal{D}=(\mathcal D_a)_{a\in \mathcal{AR}\upharpoonright A}$ be such that each $\mathcal{D}_a$ is dense open in $\mathcal H\cap[a,A]$. It is clear that if $\hat{B}\in\mathcal H\!\!\upharpoonright \!\!A$ then  $\mathcal{D}_a$ is dense open in $\mathcal H\cap[a,\hat{B}]$, for all $a\in\hat{B}$. Using lemma \ref{filtered} we can build $\mathcal{A}=(A_a)_{a\in\mathcal{AR}\upharpoonright\hat{B})}$ filtered by $\leq$ such that $A_a\in\mathcal{D}_a$ for every $a\in\mathcal{AR}\upharpoonright\hat{B}$. By selectivity, there exists $B\in\mathcal H\!\!\upharpoonright \!\!\hat{B}$ which diagonalizes $\mathcal{A}$.
\end{proof}

\section{Abstract semiselectivity and forcing}\label{semisele forcing}

\subsection{Forcing with $(\mathcal H,\leq^*)$}

\begin{nota}
We will borrow the the following notation from Section 2 of \cite{Mijares}. For $A,B\in\mathcal R$, write $A\leq^*B$ if there exists $a\in\mathcal{AR}\!\!\upharpoonright \!\! A$ such that $[a,A]\subseteq [a,B]$. In this case we say that $A$ is an \textit{almost-reduction} of $B$. This is a generalization of \textit{almost-inclusion} and \textit{almost-condensation} (see \cite{blass}). In \cite{Mijares}, it is proved that $(\mathcal R,\leq^*)$ is reflexive and transitve.
\end{nota}

In this section we will describe the main properties of the forcing notion $(\mathcal H,\leq^*)$, for a semiselective $\mathcal H$. To do so, we will need to consider a special type of coideal $\mathcal U\subseteq\mathcal R$:

\begin{defn}\label{ultra}
 Given $\mathcal U\subseteq\mathcal R$, we say that $\mathcal U$ is an \textbf{ultrafilter} if it satisfies the following:
\begin{itemize}
\item[{(a)}] $\mathcal U$ is a \textit{filter} on $(\mathcal R,\leq)$. That is:

\begin{enumerate}
\item For all $A,B\in\mathcal R$, if $A\in\mathcal U$ and $A\leq B$ then $B\in\mathcal U$.
\item\label{ultra finite intersection} For all $A,B\in\mathcal U$ and $a\in\mathcal{AR}$, if $[a,A]\neq\emptyset$ and $[a,B]\neq\emptyset$, then there exists $C\in\mathcal U$ such that $C\in [a,A]\cap[a,B]$. In particular, for all $A,B\in\mathcal U$, there exists $C\in\mathcal U$ such that $C\leq A$ and $C\leq B$.
\end{enumerate}

\item[{(b)}] If $\mathcal U'\subseteq\mathcal R$ is a filter on $(\mathcal R,\leq)$ and $\mathcal U\subseteq \mathcal U'$ then $\mathcal U'=\mathcal U$. That is, $\mathcal U$ is a \textit{maximal filter} on $(\mathcal R,\leq)$.

\item[{(c)}]  (${\bf A3}\mod\mathcal U$) For all $A\in\mathcal U$ and $a\in\mathcal{AR}\!\!\upharpoonright \!\! A$, the following holds:
\begin{itemize}
\item $[a,B]\neq\emptyset$ for all $B\in [\depth_A(a),A]\cap\mathcal U$.
\item If $B\in\mathcal U\!\!\upharpoonright \!\!A$ and $[a,B]\neq\emptyset$ then there exists $A'\in [\depth_A(a),A]\cap\mathcal U$ such that  $\emptyset\neq [a,A']\subseteq [a,B]$.

\end{itemize}

\end{itemize}

\end{defn}

\begin{rem}
In \cite{Tim2}, a similar abstract definition of ultrafilter is used. But the ultrafilters used in \cite{Tim2} do not satisfy part (c) of Definition \ref{ultra}. 
\end{rem}

It turns out that every such ultrafilter $\mathcal U\subseteq\mathcal R$ also satisfies the following very useful condition and therefore it is a coideal.

\begin{itemize}
\item[{(d)}]  (${\bf A4}\mod\mathcal U$) Let $A\in\mathcal U$ and $a\in\mathcal{AR}\!\!\upharpoonright \!\! A$ be given. For all $\mathcal O \subseteq \mathcal{AR}_{|a|+1}$ then there exists $B\in [\depth_A(a),A]\cap\mathcal U$ such that $r_{|a|+1}[a,B]\subseteq\mathcal O$ or $r_{|a|+1}[a,B]\cap\mathcal O=\emptyset$.
\end{itemize}

%\begin{jm} A7?\end{jm}
%We will need the following definition from  Section 2 of \cite{Mijares}. It gives us a measure
%of which is the ``least" element of $A$ used to ``build" $a$.
%\begin{defn}
%Given $A\in\mathcal R$ and $a\in\mathcal{AR}\!\!\upharpoonright \!\! A$, the \textbf{depth 0} of $a$ in $A$ is the number $$\depth_A^0(a)=\max\{m\leq\depth_A(a): (\forall b\leq_{fin}r_m(A))\ (\exists B\in [b,A])\ [a,B]\neq\emptyset\}.$$
%\end{defn}

%\begin{jm} Comentar sobre cu\'al es la idea que se quiere capturar con \textbf{depth 0}\end{jm}

%In \cite{Mijares}, it is proved that $(\mathcal R,\leq^*)$ is a $\sigma$--distributive preorder, under the following assumption:

%\begin{itemize}
%\item[{\bf (A5)}] Given $A\in\mathcal R$ and $n\in\mathbb N$, $$(\forall B\leq A)\ (\exists b\in\mathcal{AR}\!\!\upharpoonright \!\!  B)\ \depth_A^0(b)>n.$$
%\end{itemize} 

%From now we will assume that $\mathcal R$ satisfies {\bf A5}. The following lemmas sumarize the main features of the forcing $(\mathcal H,\leq^*)$.

\begin{lem}\label{sigma d}
If $\mathcal H\subseteq\mathcal R$ is a semiselctive coideal then $(\mathcal H,\leq^*)$ is $\sigma$-distributive.
\end{lem}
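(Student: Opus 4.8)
The plan is to use the standard reformulation of $\sigma$-distributivity: $(\mathcal H,\leq^*)$ is $\sigma$-distributive precisely when the intersection of countably many dense open (with respect to $\leq^*$) subsets of $\mathcal H$ is again dense. So I would fix dense open sets $(\mathcal E_n)_{n\in\mathbb N}$ in $(\mathcal H,\leq^*)$ and $A\in\mathcal H$, and produce $C\in\mathcal H$ with $C\leq^* A$ and $C\in\bigcap_n\mathcal E_n$. The whole strategy is to reduce this to a single invocation of semiselectivity (Definition \ref{semiselective}), which delivers diagonalizations for systems of sets that are dense open with respect to the \emph{pure} order $\leq$ inside the depth-neighborhoods $\mathcal H\cap[\depth_A(a),A]$.

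Accordingly, first I would set, for each $a\in\mathcal{AR}\upharpoonright A$,
$$\mathcal D_a=\mathcal E_{|a|}\cap\mathcal H\cap[\depth_A(a),A].$$
The next step is to check that each $\mathcal D_a$ is dense open in $\mathcal H\cap[\depth_A(a),A]$. Openness is immediate, since $B\leq B'$ implies $B\leq^* B'$ (with witness $\emptyset$, using transitivity of $\leq$), so downward $\leq^*$-closedness of $\mathcal E_{|a|}$ yields downward $\leq$-closedness of $\mathcal D_a$. Granting density for the moment, semiselectivity (applied with the trivial choice $B=A$) gives $C\in\mathcal H$ with $C\leq A$ that diagonalizes the collection $(\mathcal D_a)_a$ in the sense of Definition \ref{diagonal2}: there is a family $A_a\in\mathcal D_a$ with $[a,C]\subseteq[a,A_a]$ for every $a\in\mathcal{AR}\upharpoonright C$.

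To finish, I would fix $n$ and take $a=r_n(C)$. Then $a\in\mathcal{AR}\upharpoonright A$ (because $C\leq A$ witnesses $[a,A]\neq\emptyset$) and $|a|=n$, so $A_a\in\mathcal D_a\subseteq\mathcal E_n$. Since $a\sqsubseteq C$ we have $C\in[a,C]\subseteq[a,A_a]$, whence $C\leq A_a$ and a fortiori $C\leq^* A_a\in\mathcal E_n$; openness of $\mathcal E_n$ then gives $C\in\mathcal E_n$. As $n$ is arbitrary, $C\in\bigcap_n\mathcal E_n$ while $C\leq^* A$, which is exactly density of the intersection.

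The hard part will be the density of $\mathcal D_a$ in $\mathcal H\cap[\depth_A(a),A]$ with respect to $\leq$, which is where the gap between $\leq$ and $\leq^*$ bites. Given $D\in\mathcal H\cap[\depth_A(a),A]$, density of $\mathcal E_{|a|}$ in $(\mathcal H,\leq^*)$ only produces some $D'\leq^* D$ in $\mathcal E_{|a|}$ together with an uncontrolled witnessing approximation; such a $D'$ need not satisfy $D'\leq D$ nor preserve the stem $r_{\depth_A(a)}(D)=r_{\depth_A(a)}(A)$, so it need not lie in $[\depth_A(a),A]$. The plan there is to graft the prescribed stem back onto $D'$: using the amalgamation axiom (A.3) together with its coideal form (A.3 $\mathrm{mod}\ \mathcal H$) from clause (b) of the definition of coideal, I would manufacture $E\leq D$ with $r_{\depth_A(a)}(E)=r_{\depth_A(a)}(A)$ and $E\leq^* D'$, while staying inside $\mathcal H$; openness of $\mathcal E_{|a|}$ then places $E\in\mathcal D_a$, as needed. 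Verifying this grafting cleanly — that the amalgamated object is genuinely an almost-reduction of $D'$ and remains in the coideal — is the technical heart of the argument, the remainder being the bookkeeping above.
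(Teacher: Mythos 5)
Your argument is correct and is essentially the paper's own proof: the same sets $\mathcal D_a=\mathcal E_{|a|}\cap\mathcal H\cap[\depth_A(a),A]$, the same single appeal to semiselectivity, and the same conclusion via $C\leq^* A_{r_n(C)}$ together with openness of $\mathcal E_n$. The one step you defer --- grafting the stem back using \textbf{A3} mod $\mathcal H$ --- is carried out in the paper exactly as you describe: take the witness $b$ of $D'\leq^* D$, extend it if necessary so that $\depth_D(b)\geq\depth_A(a)$, and apply \textbf{A3} mod $\mathcal H$ inside $D$ to obtain $E\in\mathcal H\cap[\depth_D(b),D]$ with $\emptyset\neq[b,E]\subseteq[b,D']$, which then lies in $\mathcal H\cap[\depth_A(a),A]$ and, by openness of $\mathcal E_{|a|}$, in $\mathcal D_a$.
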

\begin{proof}
For every $n\in\mathbb N$, let $\mathcal D_n\subseteq\mathcal H$ be dense open in $(\mathcal H,\leq^*)$. Fix $A\in\mathcal H$. For all $a\in\mathcal{AR}\!\!\upharpoonright \!\! A$, the set $\mathcal D_a=\{B\in\mathcal H\cap[\depth_A(a),A] : B\in\mathcal D_{|a|}\}$ is dense open in $\mathcal H\cap[\depth_A(a),A]$: Fix $a\in\mathcal{AR}\!\!\upharpoonright \!\! A$. Obviously, if $B\in \mathcal D_a$ and $B'\in\mathcal H\cap[\depth_A(a),A]$ is such that $B'\leq B$ then $B'\in \mathcal D_a$. On the other hand, given $C\in\mathcal H\cap[\depth_A(a),A]$, choose $B_a\in\mathcal D_{|a|}$ such that $B_a\leq^*C$. Then there exists $b\in \mathcal{AR}\!\!\upharpoonright \!\!B_a$ such that $[b,B_a]\subseteq[b,C]$. We will assume that $\depth_C(b)\geq\depth_C(a)=\depth_A(a)$ (otherwise, let $m=|b|+\depth_C(a)$ and choose $D\in [b,B_a]$. Let $\hat{b}=r_{m+1}(D)$. Then $[\hat{b},B_a]\subseteq[\hat{b},C]$ and $\depth_C(\hat{b})\geq\depth_C(a)$). By ${\bf A3}\mod\mathcal H$, choose $B\in\mathcal H\cap[\depth_C(b),C]$ such that $\emptyset\neq[b,B]\subseteq[b,B_a]$. So $B\leq^*B_a$ and therefore $B\in\mathcal D_{|a|}$. Notice also that since $\depth_C(b)\geq\depth_C(a)=\depth_A(a)$ then $B\in\mathcal H\cap[\depth_A(a),C]\subseteq\mathcal H\cap[\depth_A(a),A]$. This implies that $B\leq C$ and $B\in\mathcal D_a$. This completes the proof that $\mathcal D_a$ is dense open. Let $B\in\mathcal H\!\!\upharpoonright \!\! A$ be a diagonalization of $\{\mathcal D_a \}_{a\in\mathcal{AR}\upharpoonright A}$. Then there exists a family $\{A_a\}_{a\in\mathcal{AR}\upharpoonright A}$ with $A_a\in\mathcal D_a$ such that $[a,B]\subseteq[a,A_a]$ for all $a\in\mathcal{AR}\!\!\upharpoonright \!\! B$. This means that $B\leq^* A_a$ for all $a\in\mathcal{AR}\!\!\upharpoonright \!\! B$. Therefore, $B\in\mathcal D_{|a|}$ for all $a\in\mathcal{AR}\!\!\upharpoonright \!\! B$. That is, $B\in\bigcap_n\mathcal D_n$. This completes the proof.

\end{proof}

\begin{lem}\label{ramsey u}
Let $\mathcal R$ be a topological Ramsey space and $\mathcal H\subseteq R$ be a semiselective coideal. Forcing with $(\mathcal H,\leq^* )$ adds no new elements of $\mathcal{AR}^{\mathbb N}$ (in particular, no new elements of $\mathcal R$ or $\mathcal H$), and if\,  $\mathcal U$ is the  $(\mathcal H,\leq^* )$--generic filter over some
ground model $V$, then $\mathcal U$ is a Ramsey ultrafilter in $V[\mathcal U]$.
\end{lem}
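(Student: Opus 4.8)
The plan is to derive both assertions from the apparatus already assembled: the $\sigma$-distributivity of Lemma \ref{sigma d}, the fact that every semiselective coideal is Ramsey (Theorem \ref{ramsey3}), and the coideal axioms $\mathbf{A3}\bmod\mathcal H$ and $\mathbf{A4}\bmod\mathcal H$, all fed into routine genericity arguments. First I would dispose of the statement about new reals. By Lemma \ref{sigma d} the forcing $(\mathcal H,\leq^*)$ is $\sigma$-distributive, and a $\sigma$-distributive forcing adds no new function from $\mathbb N$ into a ground-model set. Since $\mathcal{AR}\in V$, every element of $\mathcal{AR}^{\mathbb N}$ is such a function, so no new ones appear, and a fortiori none of $\mathcal R\subseteq\mathcal{AR}^{\mathbb N}$ or of $\mathcal H\subseteq\mathcal R$. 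The consequence I would use repeatedly is that, as every subset of a countable ground-model set is coded by such a function, every partition $f\colon\mathcal{AR}_n\to\{0,1\}$ and every $\mathcal O\subseteq\mathcal{AR}_{|a|+1}$ lying in $V[\mathcal U]$ already lies in $V$.

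Next I would verify that $\mathcal U$ is an ultrafilter in the sense of Definition \ref{ultra}. Upward closure under $\leq$ is immediate, since $A\leq B$ gives $[\emptyset,A]\subseteq[\emptyset,B]$, i.e. $A\leq^* B$, whence upward closure of the generic filter under $\leq^*$ suffices; directedness is part of being a generic filter. For the strong finite-intersection property, for $\mathbf{A3}\bmod\mathcal U$, and for the decision property $\mathbf{A4}\bmod\mathcal U$, the uniform strategy is: given the relevant ground-model datum, exhibit a set $\mathcal D\subseteq\mathcal H$ that is definable in $V$ and dense (respectively, dense below a given $A\in\mathcal U$), and invoke genericity. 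For $\mathbf{A4}\bmod\mathcal U$, with $\mathcal O\in V$ by the first paragraph, the homogenizing conditions furnished by $\mathbf{A4}\bmod\mathcal H$ form a set dense below any $A\in\mathcal U$, which $\mathcal U$ must meet; this decision property is exactly what forces maximality (b), since it leaves no room for a proper filter extension. The finite-intersection clause (a)(2) and $\mathbf{A3}\bmod\mathcal U$ are handled the same way, the requisite density being supplied by $\mathbf{A3}\bmod\mathcal H$ (amalgamation).

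For the Ramsey property I would fix a partition $f\colon\mathcal{AR}_n\to\{0,1\}$ and $A\in\mathcal U$; by the first paragraph $f\in V$. Applying Theorem \ref{ramsey3} in $V$, the set $\mathcal D_{f,A}=\{B\in\mathcal H\colon B\leq A\ \text{and}\ f\ \text{is constant on}\ \mathcal{AR}_n(B)\}$ is dense below $A$, and it lies in $V$; since $A\in\mathcal U$, genericity produces $B\in\mathcal U\cap\mathcal D_{f,A}$, the required homogeneous member of $\mathcal U$ refining $A$. Running this for every $n\geq 2$ shows $\mathcal U$ satisfies Definition \ref{ramsey coideal}, so $\mathcal U$ is a Ramsey ultrafilter in $V[\mathcal U]$.

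I expect the main obstacle to be the mismatch between the forcing order $\leq^*$, which only controls a tail $[a,A]$ of a condition, and the ultrafilter conditions of Definition \ref{ultra}, which are phrased via $\leq$ and via specific Ellentuck neighborhoods $[a,A]$. The delicate point in each density argument is to convert $\leq^*$-refinement into genuine containment of neighborhoods. I would resolve this by passing, below any given condition, to one that decides the initial segment up to the relevant depth (using $\mathbf{A1}$ and $\mathbf{A3}\bmod\mathcal H$), after which the tail-control of $\leq^*$ becomes honest $\leq$-refinement on the part of the space that matters, so that the sets invoked above are genuinely dense in $(\mathcal H,\leq^*)$ and the genericity calls are legitimate.
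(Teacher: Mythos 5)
Your proposal is correct and follows essentially the same route as the paper: $\sigma$-distributivity (Lemma \ref{sigma d}) for the no-new-reals claim, and genericity combined with ${\bf A3}\bmod\mathcal H$, ${\bf A4}\bmod\mathcal H$ and Theorem \ref{ramsey3} for the ultrafilter and Ramsey properties. In fact your write-up supplies more detail than the paper's own two-line argument, in particular the useful observation that $\sigma$-distributivity places every partition of $\mathcal{AR}_n$ from $V[\mathcal U]$ back in $V$, which the paper leaves implicit.
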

\begin{proof}
Since is $(\mathcal H,\leq^* )$ $\sigma$-distributive, the fact that forcing with $(\mathcal H,\leq^* )$ adds no new elements of $\mathcal{AR}^{\mathbb N}$ follows by a standard argument. See for instance \cite{jech}, Theorem 2.10. Let $\mathcal U$ be the  $(\mathcal H,\leq^* )$--generic filter over some ground model $V$. By genericity, $\mathcal U$ is a maximal filter. Also by genericity, ${\bf A3}$ (for the space $\mathcal R$) and Theorem \ref{ramsey3}, we have that ${\bf A3}$ mod $\mathcal U$ holds (and therefore, $\mathcal U$ satisfies Definition \ref{ultra}) and $\mathcal U$ is Ramsey.
\end{proof}

\begin{lem}\label{selective u}
Let $\mathcal U$ be the  $(\mathcal H,\leq^* )$--generic filter over some
ground model $V$. Then $\mathcal U$ is selective in $V[\mathcal U]$.
\end{lem}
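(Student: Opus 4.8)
The plan is to show that the generic filter $\mathcal U$ is selective by verifying the defining property directly: given $A\in\mathcal U$ and a family $\mathcal A=(A_a)_{a\in\mathcal{AR}\upharpoonright A}\subseteq\mathcal U\!\!\upharpoonright\!\!A$ filtered by $\leq$ with $[a,A_a]\neq\emptyset$ for all $a$, I must produce $B\in\mathcal U\!\!\upharpoonright\!\!A$ which diagonalizes $\mathcal A$. By Lemma \ref{ramsey u}, forcing with $(\mathcal H,\leq^*)$ adds no new elements of $\mathcal{AR}^{\mathbb N}$, so both the family $\mathcal A$ and any candidate diagonalization $B$ lie in the ground model $V$; this is the key leverage, since it lets me work with ground-model objects and use density/genericity arguments rather than having to produce $B$ by an explicit construction in $V[\mathcal U]$.

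First I would fix such a filtered family $\mathcal A$ in $V$ (using that $\mathcal U$ adds no new sequences, so $\mathcal A\in V$, with a name forced to equal it on some condition). For each $a\in\mathcal{AR}\upharpoonright A$, I would set
\begin{equation}
\mathcal D_a=\{C\in\mathcal H\!\!\upharpoonright\!\!A\colon C\leq A_a \text{ or } [a,C]=\emptyset\},
\end{equation}
or more precisely restrict attention to $\mathcal H\cap[\depth_A(a),A]$ and use the filtering hypothesis to arrange that the relevant $\mathcal D_a$ are dense open in $\mathcal H\cap[\depth_A(a),A]$. The filtered-by-$\leq$ condition is exactly what guarantees compatibility: any two of the $A_a$ have a common $\leq$-lower bound of the form $A_c$ in the family, so the collection of requirements can be met simultaneously below a single condition. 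I would then argue that the set
\begin{equation}
\mathcal E=\{B\in\mathcal H\colon B \text{ diagonalizes } \mathcal A\}
\end{equation}
is dense in $(\mathcal H,\leq^*)$ below $A$: given any $C\leq^* A$ in $\mathcal H$, I use semiselectivity of $\mathcal H$ (Definition \ref{semiselective}) together with Lemma \ref{filtered} to find, below $C$, a diagonalization of the dense open collection $(\mathcal D_a)_a$, which by construction will be $\leq$-below $A_a$ on each $[a,\cdot]$ and hence a genuine diagonalization of $\mathcal A$. By genericity, $\mathcal U$ meets $\mathcal E$, producing the desired $B\in\mathcal U$.

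The step I expect to be the main obstacle is the passage from the ground-model combinatorics to the generic object: I must ensure that the family $\mathcal A$ consisting of members of $\mathcal U$ can be captured by a ground-model object to which semiselectivity applies, and that the diagonalization $B$ I extract is actually \emph{forced into} $\mathcal U$ rather than merely existing in $V$. This requires a careful density argument showing that below every condition forcing ``$\mathcal A\subseteq\dot{\mathcal U}$'' there is a condition $B$ which both diagonalizes $\mathcal A$ and is compatible with (indeed forces itself into) the generic filter; here the no-new-reals conclusion of Lemma \ref{ramsey u} and the maximality/filter properties of the generic $\mathcal U$ from Definition \ref{ultra} are essential, since they let me identify $\mathcal U$-membership of a ground-model condition with its being below the generic in $\leq^*$. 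Once density of $\mathcal E$ below each such condition is established, genericity finishes the argument.
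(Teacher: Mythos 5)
Your reduction to ground-model density breaks down at its key step. The sets $\mathcal D_a=\{C\in\mathcal H\cap[\depth_A(a),A]\colon C\leq A_a \text{ or } [a,C]=\emptyset\}$ are not dense in $\mathcal H\cap[\depth_A(a),A]$: by ${\bf A3}$ mod $\mathcal H$ the escape clause $[a,C]=\emptyset$ never occurs for such $C$, so density would require every $D\in\mathcal H\cap[\depth_A(a),A]$ to have an extension in $\mathcal H$ lying below $A_a$. But $\mathcal H$ is only a coideal, so $D$ and $A_a$ need have no common lower bound at all (in $\mathbb N^{[\infty]}$, take two members of the coideal with finite intersection). The filteredness of $\mathcal A$ only gives compatibility among the $A_a$ themselves, not with arbitrary members of $\mathcal H$. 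For the same reason your set $\mathcal E$ of diagonalizations is not dense below $A$ in $(\mathcal H,\leq^*)$: any diagonalization $B$ of $\mathcal A$ satisfies $[a,B]\subseteq[a,A_a]$ for every $a\in\mathcal{AR}\upharpoonright B$, hence is essentially $\leq^*$-below each $A_a$, so no such $B$ exists below a condition incompatible with some $A_a$. You flag this as ``the main obstacle'' and propose restricting to conditions forcing $\mathcal A\subseteq\dot{\mathcal U}$, but that restriction is precisely the missing content of the proof; nothing in your outline supplies the synchronization needed to turn the local facts $C\leq^* A_a$ into a single diagonalization.

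The paper closes exactly this gap by using what the generic filter already is, rather than raw semiselectivity of $\mathcal H$: by Lemma \ref{ramsey u}, $\mathcal U$ is a Ramsey ultrafilter in $V[\mathcal U]$. For each $n$ it colors $b\in\mathcal{AR}_{n+1}$ according to whether $[b,A_{r_n(b)}]\neq\emptyset$ and takes $B\in\mathcal U$, $B\leq A$, homogeneous; since every $A_a$ lies in the ultrafilter $\mathcal U$, part (a)(2) of Definition \ref{ultra} yields $C\in\mathcal U$ below both $B$ and $A_a$ with $[a,C]\neq\emptyset$, which forces the homogeneous color to be $1$ and gives $r_{n+1}[a,B]\subseteq r_{n+1}[a,A_a]$ for all $a\in\mathcal{AR}_n\upharpoonright B$. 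This shows $\mathcal U$ meets each set $\mathcal E_n$ of such $B$, and $\sigma$-distributivity together with genericity produce a single $B\in\mathcal U\cap\bigcap_n\mathcal E_n$, which by ${\bf A1}$ diagonalizes $\mathcal A$. The ultrafilter property of the generic $\mathcal U$ --- not density in $\mathcal H$ --- is what makes the compatibility step work, and your proposal never invokes it.
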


\begin{proof}
In $V[\mathcal U]$, fix $A\in\mathcal U$ and let $\{A_a\}_{a\in\mathcal{AR}\upharpoonright A}$ be a collection of elements of $\mathcal U$ such that $[a,A_a]\neq\emptyset$, for all $a\in\mathcal{AR}$. Given an integer $n>0$, define $f : \mathcal{AR}_{n+1} \rightarrow\{0,1\}$ as $f(b)=1$ if and only if $[b,A_{r_n(b)}]\neq\emptyset$.  By Lemma \ref{ramsey u}, $\mathcal U$ is a Ramsey ultrafilter in $V[\mathcal U]$ so there exist $B\in\mathcal U$ such that $B\leq A$ and $f$ is constant in $\mathcal{AR}_{n+1}\!\!\upharpoonright \!\! B$. Take an arbitrary $a\in \mathcal{AR}_n\!\!\upharpoonright \!\! B$. Notice that there exists $C\in\mathcal U$ such that $C \leq B$, $C\leq A_a$, and $[a,C]\neq\emptyset$, by part (a)(\ref{ultra finite intersection}) of Definition \ref{ultra}. For any $b \in r_{n+1}[a,C]$ we have $f(b) = 1$, and therefore $f$ takes constant value $1$ in $\mathcal{AR}_{n+1}\!\!\upharpoonright \!\! B$.  Since $a$ is arbitrary, this implies that for all $a\in\mathcal{AR}_n\!\!\upharpoonright \!\! B$,  $r_{n+1}[a,B]\subseteq  r_{n+1}[a,A_a]$. 

\medskip

Now, recall that $\mathcal U\subseteq\mathcal H$. So the above reasoning implies that for every integer $n>0$, the set $$\mathcal E_n = \{B\in\mathcal H : (\forall a\in\mathcal{AR}_n\!\!\upharpoonright \!\! B)\ r_{n+1}[a,B]\subseteq  r_{n+1}[a,A_a]\}$$ is dense open in $(\mathcal H,\leq^*)$. So, by genericity of $\mathcal U$ and $\sigma$--distributivity of $(\mathcal H,\leq^*)$, we can choose $B\in\mathcal U\cap\bigcap_n\mathcal E_n $. So for every $a\in\mathcal{AR}\!\!\upharpoonright \!\! B$ we have $r_{|a|+1}[a,B]\subseteq  r_{|a|+1}[a,A_a]$. By ${\bf A1}$, this implies that $[a,B]\subseteq  [a,A_a]$, for every  $a\in\mathcal{AR}\!\!\upharpoonright \!\! B$. Therefore, $B$ is a diagonalization of $(A_a)_{a\in\mathcal{AR}}$. This completes the proof.
\end{proof}

\begin{lem}
Suppose $\mathcal H$ is not semiselective. Let $\mathcal U$ be $(\mathcal H,\leq^* )$--generic filter over some
ground model $V$. Then $\mathcal U$ is not selective in $V[\mathcal U]$.
\end{lem}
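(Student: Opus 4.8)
The plan is to prove the contrapositive content of Lemma \ref{selective u}: a concrete witness to the failure of semiselectivity will be transformed, by genericity, into a filtered family in $\mathcal{U}$ that $\mathcal{U}$ cannot diagonalize. Fix such a witness: an $A_0\in\mathcal{H}$, a collection $\mathcal{D}=(\mathcal{D}_a)_{a\in\mathcal{AR}\upharpoonright A_0}$ with each $\mathcal{D}_a$ dense open in $\mathcal{H}\cap[\depth_{A_0}(a),A_0]$, and a $B_0\in\mathcal{H}\upharpoonright A_0$, such that no $C\in\mathcal{H}$ with $C\leq B_0$ is a diagonalization of $\mathcal{D}$ in the sense of Definition \ref{diagonal2} (this is exactly the negation of Definition \ref{semiselective}). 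Since $B_0$ is a condition of the forcing, I would show $B_0\Vdash\text{``}\dot{\mathcal{U}}\text{ is not selective''}$, which gives the statement for the generic $\mathcal{U}$ taken through $B_0$.

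First I would produce, inside $V[\mathcal{U}]$ and below $B_0$, a family $\mathcal{A}=(A_a)_{a\in\mathcal{AR}\upharpoonright B_0}$ of members of $\mathcal{U}$ with $A_a\in\mathcal{D}_a$ and $[a,A_a]\neq\emptyset$, which is a decreasing $\leq$-chain and hence filtered by $\leq$. The construction mirrors Lemma \ref{filtered}: enumerate $\mathcal{AR}\upharpoonright B_0$ by increasing depth and recursively choose $A_a\in\mathcal{D}_a\cap\mathcal{U}$ below the previously chosen member. Each choice is made possible by a density claim: for every $a$ and every $P\leq^* B_0$ in $\mathcal{U}$, the set $\{C\in\mathcal{H}:C\in\mathcal{D}_a\text{ and }C\leq P\}$ is dense below $P$ in $(\mathcal{H},\leq^*)$, so that genericity of $\mathcal{U}$ supplies a member of $\mathcal{D}_a\cap\mathcal{U}$ below $P$. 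The nonemptiness of $[a,A_a]$ is automatic since $A_a$ lies in $\mathcal{D}_a\subseteq\mathcal{H}\cap[\depth_{A_0}(a),A_0]$, and being a decreasing chain the family is filtered (for indices $a$ occurring before $b$ in the enumeration, the later element $A_b$ is $\leq A_a$ and $\leq A_b$).

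Granting $\mathcal{A}$, I would finish by showing it admits no diagonalization in $\mathcal{U}$, which is precisely a failure of selectivity at $B_0$. Suppose toward a contradiction that some $B\in\mathcal{U}$ diagonalizes $\mathcal{A}$ in the sense of Definition \ref{diagonal1}. Since $\mathcal{U}$ is generic it is downward directed, and a density argument shows any two of its members have a common $\leq$-lower bound in $\mathcal{U}$; thus there is $B'\in\mathcal{U}$ with $B'\leq B$ and $B'\leq B_0$. One checks directly that $B'\leq B$ still diagonalizes $\mathcal{A}$, because for $a\in\mathcal{AR}\upharpoonright B'$ one has $[a,B']\subseteq[a,B]\subseteq[a,A_a]$. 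But $B'\in\mathcal{U}\subseteq\mathcal{H}$ and $(A_a)$ is a selector with $A_a\in\mathcal{D}_a$, so by Definition \ref{diagonal2} the element $B'\in\mathcal{H}$ with $B'\leq B_0$ is a diagonalization of $\mathcal{D}$, contradicting the choice of the witness. Hence no such $B$ exists and $\mathcal{U}$ is not selective.

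The main obstacle is the density claim underlying the recursion, where the forcing order $\leq^*$ (almost-reduction) must be reconciled with the full order $\leq$ needed both for the dense-openness of each $\mathcal{D}_a$ and for the filtering and diagonalization. The difficulty is that every element of $\mathcal{D}_a$ carries the fixed initial segment $r_{\depth_{A_0}(a)}(A_0)$, whereas a condition $P\leq^* B_0$ agrees with $B_0$ only on a tail; bridging this gap requires gluing that initial segment to a tail of $P$ using amalgamation (\textbf{A3} mod $\mathcal{H}$) and then invoking dense-openness of $\mathcal{D}_a$ within $\mathcal{H}\cap[\depth_{A_0}(a),A_0]$, exactly as in the proof of Lemma \ref{filtered}.
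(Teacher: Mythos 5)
Your overall strategy is the same as the paper's: fix a witness $(A_0,\mathcal D,B_0)$ to the failure of semiselectivity, use genericity to extract a filtered family $(A_a)$ with $A_a\in\mathcal U\cap\mathcal D_a$, and observe that any diagonalization of this family lying in $\mathcal U\subseteq\mathcal H$ would be a diagonalization of $\mathcal D$ below $B_0$, contradicting the witness. Your treatment of the exact negation of Definition \ref{semiselective} (keeping the parameter $B_0$ and working below that condition) and of the final contradiction (passing to a common $\leq$-lower bound $B'\leq B, B_0$ in $\mathcal U$ and checking that diagonalization is inherited downward) is actually more careful than the paper's, which simply says the family ``has no diagonalization in $\mathcal H$.''

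There is, however, one step that fails as written: the density claim that $\{C\in\mathcal H: C\in\mathcal D_a \text{ and } C\leq P\}$ is dense below $P$ in $(\mathcal H,\leq^*)$. Every member of $\mathcal D_a$ lies in $[\depth_{A_0}(a),A_0]$ and hence begins with the fixed approximation $r_{\depth_{A_0}(a)}(A_0)$, whereas $P$ is only an almost-reduction of $B_0$; already in the Ellentuck space, if $P$ omits one of the first $\depth_{A_0}(a)$ elements of $A_0$ then no $C\supseteq r_{\depth_{A_0}(a)}(A_0)$ can satisfy $C\leq P$, so the set is empty. You flag exactly this obstacle, but the amalgamation repair you propose (gluing $r_{\depth_{A_0}(a)}(A_0)$ to a tail of $P$ via {\bf A3} mod $\mathcal H$) produces a $C$ with $C\leq^* P$ only, not $C\leq P$. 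Consequently your family cannot be arranged as a decreasing $\leq$-chain through the sets $\mathcal D_a$; what the construction actually delivers is a family filtered by $\leq^*$ with $A_a\in\mathcal U\cap\mathcal D_a$, which is precisely where the paper's proof lands (``filtered by $\leq^*$''). To finish, one must still bridge to the definition of selectivity, which concerns families filtered by $\leq$: here the natural move is to use that the generic $\mathcal U$ is an ultrafilter in the sense of Definition \ref{ultra} (Lemma \ref{ramsey u}), so that by clause (a)(2) any two of the $A_a$ admit a common $\leq$-lower bound in $\mathcal U$ which can be folded into the family, or to interleave such lower bounds into the recursion. Your final paragraph already uses this directedness for $B$ and $B_0$, so the missing piece is to deploy the same fact when building $(A_a)$ rather than insisting on $C\leq P$ at each step.
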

\begin{proof}
Since $\mathcal H$ is not semiselective, there exist $A\in\mathcal H$ and a collection $(\mathcal D_a)_{a\in\mathcal{AR}\upharpoonright A}$ such that $\mathcal D_a$ is dense in $\mathcal H\cap[\depth_A(a),A]$, for every  $a\in\mathcal{AR}\!\!\upharpoonright \!\!A$, with no diagonalization in $\mathcal H$.  In $V[\mathcal U]$, it turns out that each $\mathcal D_a$ is dense in $(\mathcal H\!\!\upharpoonright \!\!A,\leq^* )$. Proceeding as in Lemma \ref{filtered}, find a collection $(A_a)_{a\in\mathcal{AR}\upharpoonright A}$ filtered by $\leq^*$ and such that $A_a\in\mathcal U\cap\mathcal D_a$, for every  $a\in\mathcal{AR}\!\!\upharpoonright \!\!A$. Then the collection $(A_a)_{a\in\mathcal{AR}\upharpoonright A}$ has no diagonalization in $\mathcal U$ and therefore $\mathcal U$ is not selective. This completes the proof.
\end{proof}

The following theorem summarizes all the results of this section.

\begin{thm}
Let $\mathcal H\subseteq\mathcal R$ be a coideal. The following are equivalent.
\begin{enumerate}
\item $\mathcal H$ is semiselective.
\item Forcing with $(\mathcal H,\leq^* )$ adds no new elements of $\mathcal{AR}^{\mathbb N}$ (in particular, no new elements of $\mathcal R$ or $\mathcal H$), and if $\mathcal U$ is the  $(\mathcal H,\leq^* )$--generic filter over some
ground model $V$, then $\mathcal U$ is a selective ultrafilter in $V[\mathcal U]$.
\end{enumerate}
\end{thm}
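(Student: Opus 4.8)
The plan is to prove the two implications separately, and the work has essentially already been done by the lemmas of this section. The equivalence bundles together Lemma \ref{ramsey u}, Lemma \ref{selective u}, and the final lemma about non-semiselectivity, so the proof amounts to assembling these pieces and checking that the converse directions close the loop.

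For the direction $(1)\Rightarrow(2)$, I would simply invoke the machinery already developed. Assuming $\mathcal H$ is semiselective, Lemma \ref{sigma d} gives that $(\mathcal H,\leq^*)$ is $\sigma$-distributive, and a standard forcing argument (as cited, \cite{jech} Theorem 2.10) then shows that forcing with $(\mathcal H,\leq^*)$ adds no new elements of $\mathcal{AR}^{\mathbb N}$; this is exactly the content of the first half of Lemma \ref{ramsey u}. For the generic filter $\mathcal U$, Lemma \ref{ramsey u} gives that $\mathcal U$ is a Ramsey ultrafilter in $V[\mathcal U]$, and Lemma \ref{selective u} upgrades this to selectivity. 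So $(1)\Rightarrow(2)$ is a direct citation of the three lemmas.

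For the direction $(2)\Rightarrow(1)$, the natural route is the contrapositive, which is precisely the last lemma of the section: if $\mathcal H$ is \emph{not} semiselective, then the $(\mathcal H,\leq^*)$-generic filter $\mathcal U$ fails to be selective in $V[\mathcal U]$. That lemma produces, from a witness to the failure of semiselectivity (an $A\in\mathcal H$ and a collection $(\mathcal D_a)_a$ of dense open sets with no diagonalization in $\mathcal H$), a filtered collection $(A_a)_a$ of members of $\mathcal U$ admitting no diagonalization in $\mathcal U$, contradicting selectivity. I would note that the hypothesis in (2) that no new elements of $\mathcal{AR}^{\mathbb N}$ are added is what guarantees the collection $(A_a)_a$ and the obstruction remain in $V[\mathcal U]$ and are correctly computed; this is where one must be slightly careful that the absence of new reals lets us reflect the ground-model failure into the extension.

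The main obstacle, if any, is bookkeeping rather than a genuine difficulty: one must ensure that the three implications, stated in the lemmas under slightly different hypotheses, compose cleanly into the single biconditional, and in particular that the ``no new elements'' clause of (2) is both a consequence of semiselectivity (via $\sigma$-distributivity) and compatible with the non-selectivity argument in the contrapositive. Since all the substantive combinatorics has been isolated in Lemmas \ref{sigma d}, \ref{ramsey u}, \ref{selective u}, and the final lemma, the proof itself is short; the expected write-up is essentially ``$(1)\Rightarrow(2)$ follows from Lemmas \ref{ramsey u} and \ref{selective u}, and $(2)\Rightarrow(1)$ is the contrapositive given by the preceding lemma.''
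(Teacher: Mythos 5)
Your proposal is correct and matches the paper's intended argument exactly: the paper states this theorem with the remark that it ``summarizes all the results of this section,'' leaving the proof as precisely the assembly of Lemmas \ref{sigma d}, \ref{ramsey u}, \ref{selective u} for $(1)\Rightarrow(2)$ and the contrapositive via the final lemma for $(2)\Rightarrow(1)$. Your additional observation about where the ``no new elements'' clause enters is a reasonable gloss but not something the paper makes explicit.
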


\subsection{Forcing with $\mathbb M_{\mathcal H}$}\label{M_U}

Let $\mathcal H$ be a semiselective coideal. In this section we will study the forcing notion induced by the following poset:  $$\mathbb M_{\mathcal H}=\{(a,A) : A\in\mathcal H,\ a\in\mathcal{AR} \!\!\upharpoonright \!\!A \}\cup\{\emptyset\},$$ where $(a,A)\leq (b,B)$ if and only if $[a,A]\subseteq [b,B]$. We say that $\mathbb M_{\mathcal H}$ is the \textbf{Mathias poset} associated to $\mathcal H$. It is said that $\mathcal H$ has the \textbf{Prikry property} if for every sentence of the forcing language $\phi$ and every condition $(a,A)\in\mathbb M_{\mathcal H}$ there exists $B\in [a,A]\cap\mathcal H$ such that $(a,B)$ decides $\phi$. We say that  $x\in\mathcal R$ is $\mathbb M_{\mathcal H}$-\textbf{generic} over a model $V$ if for every dense open subset $\mathcal D\in M$ of $\mathbb M_{\mathcal H}$, there exists a condition $(a,A)\in\mathcal D$ such that $x\in[a,A]$. It is said that $\mathcal H$ has the  \textbf{Mathias property} if it satisfies that if $x$ is  $\mathbb M_{\mathcal H}$-generic over a model $V$,  then every $y\leq x$ is $\mathbb M_{\mathcal H}$-generic over $M$

\begin{thm}\label{prikry prop}
If $\mathcal H\subseteq\mathcal R$ is a semiselective coideal then it has the Prikry property. 
\end{thm}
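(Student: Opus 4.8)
The plan is to prove the Prikry property by a combinatorial forcing argument modeled exactly on the \textbf{accepts/rejects/decides} machinery developed in Lemma \ref{comb forcing} and the Claim, but now applied to the forcing language rather than to membership in a fixed $\mathcal F\subseteq\mathcal{AR}$. Fix a sentence $\phi$ of the forcing language and a condition $(a,A)\in\mathbb M_{\mathcal H}$. The idea is to redefine, for this particular $\phi$, a notion of ``deciding'' that is compatible with the order on $\mathbb M_{\mathcal H}$: say that $B\in\mathcal H\cap[\depth_A(a),A]$ \emph{decides $\phi$ below $a$} if $(a,B)\Vdash\phi$ or $(a,B)\Vdash\neg\phi$. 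The first step is to observe that by the fundamental theorem of forcing every condition has an extension deciding $\phi$, and then to transfer this to the ``one-step'' setting: I would show that the set
$$\mathcal D_a=\{B\in\mathcal H\cap[\depth_A(a),A]\colon (a,B)\text{ decides }\phi\}$$
is dense open in $\mathcal H\cap[\depth_A(a),A]$. Density holds because any $C$ in that neighborhood has, by the density of $\{(b,B):(b,B)\text{ decides }\phi\}$ in $\mathbb M_{\mathcal H}$, an extension $(a,B)\leq(a,C)$ deciding $\phi$ with $B\in\mathcal H$; openness holds because if $(a,B)\Vdash\phi$ and $B'\leq B$ then $[a,B']\subseteq[a,B]$ forces $(a,B')\leq(a,B)$, so $(a,B')\Vdash\phi$ as well.

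Next I would apply semiselectivity. Using the collection $\mathcal D=(\mathcal D_b)_{b\in\mathcal{AR}\upharpoonright A}$ — where for $b\neq a$ we may harmlessly set $\mathcal D_b=\mathcal H\cap[\depth_A(b),A]$, or more usefully run the argument uniformly over all $b$ so as to decide $\phi$ below every $b$ simultaneously — Definition \ref{semiselective} yields $B\in\mathcal H\upharpoonright A$ diagonalizing $\mathcal D$, so that for each relevant $b\in\mathcal{AR}\upharpoonright B$ there is $A_b\in\mathcal D_b$ with $[b,B]\subseteq[b,A_b]$. In particular $[a,B]\subseteq[a,A_a]$, and since $A_a$ decides $\phi$ below $a$, the condition $(a,A_a)$ forces $\phi$ or $\neg\phi$; because $(a,B)\leq(a,A_a)$ in $\mathbb M_{\mathcal H}$, the condition $(a,B)$ inherits that decision. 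This produces exactly the required $B\in[a,A]\cap\mathcal H$ with $(a,B)$ deciding $\phi$.

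The subtle point — and the step I expect to be the main obstacle — is the interface between the \emph{local} deciding (the one-step density of $\mathcal D_a$ in the $\leq$-ordered neighborhood $\mathcal H\cap[\depth_A(a),A]$, which is what the definition of dense open in Definition \ref{diagonal2} and semiselectivity require) and the \emph{global} deciding (density in the $\leq^*$- or $[a,A]$-ordered poset $\mathbb M_{\mathcal H}$, which is what the forcing theorem gives). These use different orders, so I must check that an $(a,B)$-extension witnessing a forcing decision can always be replaced by one living in $[\depth_A(a),A]$, i.e. that forcing-extensions of $(a,C)$ of the form $(a,B)$ with $B\leq C$ suffice and that such $B$ can be found inside $\mathcal H$ at the correct depth. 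Here axiom \textbf{A3} mod $\mathcal H$ from the coideal definition is what guarantees nonemptiness and the needed amalgamation $[a,A']\subseteq[a,B]$, exactly as it was used in Lemma \ref{sigma d}; the openness half of the density proof must be argued with care since ``$\leq$-smaller and same initial segment $a$'' must entail ``stronger condition in $\mathbb M_{\mathcal H}$,'' which holds precisely because $(a,A)\leq(b,B)\iff[a,A]\subseteq[b,B]$. Once these order-compatibility checks are in place, the proof closes by invoking semiselectivity and reading off the decision from $A_a$ as above.
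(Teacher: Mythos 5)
There is a genuine gap at the very first step: the set
$\mathcal D_a=\{B\in\mathcal H\cap[\depth_A(a),A]\colon (a,B)\text{ decides }\phi\}$
need not be dense in $\mathcal H\cap[\depth_A(a),A]$. Your density argument appeals to ``the density of $\{(b,B):(b,B)\text{ decides }\phi\}$ in $\mathbb M_{\mathcal H}$,'' which is just the fundamental theorem of forcing; but that theorem only produces an extension $(b,B)\leq(a,C)$ whose stem $b$ may be strictly longer than $a$. Nothing guarantees an extension \emph{with the same stem} $(a,B)\leq(a,C)$ deciding $\phi$, and the claim that one always exists is essentially the Prikry property itself (localized below $(a,C)$), so the argument is circular. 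Indeed, if your $\mathcal D_a$ were dense, the theorem would follow by picking a single $A_a\in\mathcal D_a$ --- you would not need semiselectivity or the diagonalization at all, only the one point $b=a$ of your family $\mathcal D$ is ever used in your closing step --- which is a sign that the real difficulty has been assumed away. The order-compatibility issues you flag at the end ($\leq$ versus the $\mathbb M_{\mathcal H}$-ordering, depths, \textbf{A3} mod $\mathcal H$) are genuine but secondary; the main obstacle is the stem-lengthening problem, which your proposal does not address.

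The paper's proof handles exactly this point in two moves. First, it replaces your $\mathcal D_b$ by the genuinely dense open set
$\{B\in\mathcal H\cap[\depth_A(b),A] : (b,B)\text{ decides }\phi\ \text{or}\ (\forall C\in\mathcal H\cap[b,B])\ (b,C)\text{ does not decide }\phi\}$,
whose second disjunct absorbs the case where no stem-$b$ extension decides $\phi$; diagonalizing this family over \emph{all} $b\sqsupseteq a$ yields a $B$ on which deciding with stem $b$ is a hereditary property of $b$. Second, it sets $\mathcal F_0=\{b\sqsupseteq a : (b,B)\Vdash\phi\}$ and $\mathcal F_1=\{b\sqsupseteq a : (b,B)\Vdash\neg\phi\}$ and applies the abstract Galvin lemma (Lemma \ref{galvinlocal2}) twice to obtain $C\in\mathcal H\cap[a,B]$ such that for each $i$ either $\mathcal{AR}\!\!\upharpoonright\!\![a,C]$ misses $\mathcal F_i$ or every member of $[a,C]$ has an initial segment in $\mathcal F_i$; if both ``hitting'' alternatives held one could produce compatible conditions forcing $\phi$ and $\neg\phi$, so one $\mathcal F_i$ is avoided entirely and $(a,C)$ decides $\phi$. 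To repair your proposal you would need to import both of these ingredients; the semiselectivity-plus-one-step-density skeleton alone does not suffice.
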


\begin{proof}

Suppose $\mathcal H$ is semiselective and fix a sentence $\phi$ of the forcing language, and a condition $(a,A)\in\mathbb M_{\mathcal H}$. For every $b\in\mathcal{AR} \!\!\upharpoonright \!\!A$ with $b\sqsupseteq a$, let

$$\mathcal D_b = \{B\in\mathcal H\cap[\depth_A(b),A] : (b,B)\mbox{ decides } \phi\mbox{ or}\ (\forall C\in\mathcal H\cap [b,B])\ (b,C)\mbox{ does not decide } \phi\}.$$

\noindent and set $\mathcal D_b = \mathcal H\cap [\depth_A(b),A] $, for all $b\in\mathcal{AR} \!\!\upharpoonright \!\!A$ with $b\not\sqsupseteq a$. Each $\mathcal D_b$ is dense open in $\mathcal H\cap[\depth_A(b),A]$. Fix a diagonalization $B\in\mathcal H\!\!\upharpoonright \!\! A$. For every $b\in\mathcal{AR} \!\!\upharpoonright \!\!A$. Let 

$$\mathcal F_0 = \{b\in \mathcal{AR} \!\!\upharpoonright \!\!B : a\sqsubseteq b\ \ \&\ (b,B) \mbox{ forces }  \phi\},$$

$$\mathcal F_1 = \{b\in\mathcal{AR} \!\!\upharpoonright \!\!B : a\sqsubseteq b\ \ \&\ (b,B) \mbox{ forces }  \neg\phi\}.$$

Let $\hat{C}\in\mathcal H\!\!\upharpoonright \!\!B$ as in Lemma \ref{galvinlocal2} applied to $B$ and $\mathcal F_0$. And let $C\in\mathcal H\!\!\upharpoonright \!\!\hat{C}$ be as in Lemma \ref{galvinlocal} applied to $\hat{C}$ and $\mathcal F_1$. Let us prove that $(a,C)$ decides $\phi$. So let $(b_0,C_0)$ and $(b_1,C_1)$ be two different arbitrary extensions of $(a,C)$. Suppose that $(b_0,C_0)$ forces $\phi$ and $(b_1,C_1)$ forces $\neg\phi$. Then $b_0\in\mathcal F_0$ and $b_1\in\mathcal F_1$. But $b_0,b_1\in\mathcal{AR}\!\!\upharpoonright \!\!C$, so by the choice of $C$ this means that every element of $\mathcal H\cap[a,C]$ has an initial segment both in $\mathcal F_0$ and in $\mathcal F_1$. So there exist two compatible extensions of $(a,C)$ such that one forces $\phi$ and the other forces $\neg\phi$. A contradiction. So either both $(b_0,C_0)$ and $(b_1,C_1)$ force $\phi$ or both $(b_0,C_0)$ and $(b_1,C_1)$ force $\neg\phi$. Therefore $(a,C)$ decides $\phi$.

\end{proof}

Now, we will prove that if $\mathcal H\subseteq\mathcal R$ is semiselective then it has the Mathias property (see Theorem \ref{mathias prop} below). Given a selective ultrafilter $\mathcal U\subset\mathcal R$, let $\mathbb M_{\mathcal U}$ be set of all pairs $(a,A)$ such that $A\in\mathcal U$ and $[a,A]\neq\emptyset$. Order $\mathbb M_{\mathcal U}$ with the same ordering used for $\mathbb M_{\mathcal H}$. The results contained in the rest of this Section were essentially proved in \cite{Mijares}, but they were written (in \cite{Mijares}) for the case $\mathcal H=\mathcal R$ and using weaker definitions of ultrafilter and selectivity. We adapted the proofs and include them here for completeness.

\begin{defn}\label{captures}
 Let $\mathcal U\subseteq\mathcal R$ be a selective ultrafilter, $\mathcal D$ a dense open subset of $\mathbb M_{\mathcal U}$, and $a\in\mathcal{AR}$. We say that $A$ captures $(a,\mathcal D)$ if $A\in\mathcal U$, $[a,A]\neq\emptyset$, and for all $B\in [a,A]$ there exists $m>|a|$ such that $(r_m(B),A)\in\mathcal D$.
\end{defn}

\begin{lem}\label{lemma captures}
Let $\mathcal U\subseteq\mathcal R$ be a selective ultrafilter and $\mathcal D$ a dense open subset of $\mathbb M_{\mathcal U}$. Then, for every $a\in\mathcal{AR}$ there exists $A$ which captures $(a,\mathcal D)$.
\end{lem}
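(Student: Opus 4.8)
The plan is to run a combinatorial forcing relative to $\mathcal D$, diagonalize it with the selectivity of $\mathcal U$, and then read off the conclusion from the abstract Galvin Lemma \ref{galvinlocal2}. Fix $a$ and pick any $A\in\mathcal U$ with $[a,A]\neq\emptyset$. For $b\in\mathcal{AR}$ with $a\sqsubseteq b$, say that a condition $C\in\mathcal U$ with $[b,C]\neq\emptyset$ \emph{accepts} $b$ if $(b,C)\in\mathcal D$, that $C$ \emph{rejects} $b$ if no $C'\in\mathcal U\cap[\depth_C(b),C]$ accepts $b$, and that $C$ \emph{decides} $b$ if it does one of the two. The point of the definition is that, because $\mathcal D$ is open (downward closed in $\mathbb M_{\mathcal U}$), both notions are monotone under $\leq$: if $C$ accepts $b$ and $C'\leq C$ lies in $\mathcal U$ with $[b,C']\neq\emptyset$, then $(b,C')\leq(b,C)\in\mathcal D$ forces $(b,C')\in\mathcal D$, so $C'$ accepts $b$; the contrapositive shows rejection is inherited downward as well. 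Moreover every $b$ is decided by some refinement in $\mathcal U$: either some $C'\leq C$ accepts $b$, or $C$ itself rejects it. These are the exact analogues of parts (2)--(4) of Lemma \ref{comb forcing}.

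First I would produce a single $A_0\in\mathcal U\!\!\upharpoonright\!\!A$ deciding every $b\in\mathcal{AR}\!\!\upharpoonright\!\!A_0$ with $a\sqsubseteq b$. For each such $b$ the set $\mathcal D_b=\{C\in\mathcal U\cap[\depth_A(b),A]:C\text{ decides }b\}$ is dense open in $\mathcal U\cap[\depth_A(b),A]$---density is the ``some refinement decides'' fact and openness is downward monotonicity---while for $b\not\sqsupseteq a$ I put $\mathcal D_b=\mathcal U\cap[\depth_A(b),A]$. Since $\mathcal U$ is selective it is semiselective, so, exactly as in the Claim preceding Lemma \ref{galvinlocal}, semiselectivity yields $A_0\in\mathcal U\!\!\upharpoonright\!\!A$ diagonalizing $(\mathcal D_b)_{b}$; by downward monotonicity $A_0$ then decides every relevant $b$ and shares its decision with every refinement of $A_0$ in $\mathcal U$.

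Now set $\mathcal F=\{b\in\mathcal{AR}:a\sqsubseteq b\text{ and }(b,A_0)\in\mathcal D\}$, the stems accepted by $A_0$, and apply Lemma \ref{galvinlocal2} with $\mathcal H=\mathcal U$ (selective, hence semiselective), this $\mathcal F$, the set $A_0$ and the stem $a$, obtaining $B\in\mathcal U\cap[a,A_0]$ satisfying alternative (1) or (2). Alternative (1), that $\mathcal{AR}\!\!\upharpoonright\!\![a,B]\cap\mathcal F=\emptyset$, is impossible: it would say that every $b\in\mathcal{AR}\!\!\upharpoonright\!\![a,B]$ is rejected by $A_0$, hence by $B$ through monotonicity; but $(a,B)\in\mathbb M_{\mathcal U}$ and density of $\mathcal D$ give an extension $(b',B')\leq(a,B)$ in $\mathcal D$ with $b'\in\mathcal{AR}\!\!\upharpoonright\!\![a,B]$ and $B'\in\mathcal U$, so $(b',B')\in\mathcal D$ contradicts that $B$---and therefore its refinement $B'$---rejects $b'$. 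Hence alternative (2) holds: every $C\in[a,B]$ has some $r_n(C)\in\mathcal F$, i.e.\ $(r_n(C),A_0)\in\mathcal D$. Since $B\leq A_0$ we have $(r_n(C),B)\leq(r_n(C),A_0)\in\mathcal D$, and openness gives $(r_n(C),B)\in\mathcal D$; replacing $r_n(C)$ by $r_{|a|+1}(C)$ when $n=|a|$ (again legitimate by downward closure) I may take $n>|a|$. Thus $B$ captures $(a,\mathcal D)$.

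The main obstacle, and the reason the intermediate set $A_0$ cannot be dispensed with, is the clash between the two directions of monotonicity in $\mathbb M_{\mathcal U}$: openness of $\mathcal D$ propagates membership \emph{downward} (shrinking the side condition), whereas density forces the required witnesses to have \emph{longer} stems. Defining $\mathcal F$ through a single decided $A_0$ is what makes ``accepted'' a fixed subset of $\mathcal{AR}$ to which Galvin's lemma applies, and at the same time keeps the witness $(r_n(C),A_0)\in\mathcal D$ transferable to the Galvin set $B\leq A_0$; a naive global definition of $\mathcal F$ would lose this link and break alternative (2).
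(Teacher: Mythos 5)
Your argument is correct, but it reaches the conclusion by a different route than the paper. You run the combinatorial forcing of Lemma \ref{comb forcing} with acceptance redefined as ``$(b,C)\in\mathcal D$'', use semiselectivity (via selective $\Rightarrow$ semiselective) to produce a fully deciding $A_0$, and then apply the abstract Galvin Lemma \ref{galvinlocal2} to the set of accepted stems. The paper instead builds a $\leq$-filtered family $(C_b)$ with $(b,C_b)\in\mathcal D$ whenever some side condition in $\mathcal U$ works for $b$, diagonalizes it by \emph{selectivity} directly to get a single $C$ with $(b,C)\in\mathcal D$ whenever any $\hat C\in\mathcal U$ accepts $b$, and then applies Theorem \ref{abiertos} ($\mathcal U$-Ramseyness of metric open sets) to the set $\mathcal X$ of elements having an accepted initial segment, ruling out the negative alternative by density of $\mathcal D$ exactly as you do. The two proofs are structurally parallel --- your deciding $A_0$ plays the role of the paper's $C$, and Theorem \ref{abiertos} is itself a corollary of Galvin's lemma, so you are essentially inlining one layer of abstraction; what your version buys is that it reuses the already-developed combinatorial-forcing machinery verbatim, while the paper's version avoids the decide/reject bookkeeping by exploiting filteredness. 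One loose spot: when you rule out alternative (1) you call $B'$ ``a refinement of $B$,'' but $(b',B')\leq(a,B)$ only gives $[b',B']\subseteq[a,B]$, not $B'\leq B$; to contradict the rejection of $b'$ by $A_0$ you should first use part (a)(2) of Definition \ref{ultra} together with ${\bf A3}\bmod\mathcal U$ to convert $(b',B')\in\mathcal D$ into an accepting condition lying in $[\depth_{A_0}(b'),A_0]\cap\mathcal U$. This repair is routine and at the same level of rigor as the paper's own treatment of Lemma \ref{comb forcing}, so it is not a genuine gap.
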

\begin{proof}

Given $a\in\mathcal{AR}$, choose $B\in\mathcal U$ such that $[a,B]\neq\emptyset$. We can define a collection $(C_b)_{b\in\mathcal{AR}\upharpoonright B}$, filtered by $\leq$ and with $[b,C_b]\neq\emptyset$, such that for all $b\in\mathcal{AR}\!\!\upharpoonright \!\! B$ with $a\sqsubseteq b$ either $(b,C_b)\in\mathcal D$ or $C_b=B$. By selectivity, let $C\in\mathcal U\cap[a,B]$ be a diagonalization of $(C_b)_{b\in\mathcal{AR}\upharpoonright B}$. Then, for all $b\in\mathcal{AR}\!\!\upharpoonright \!\! B$ with $a\sqsubseteq b$, if there exists a $\hat{C}\in\mathcal U$ such that $(b,\hat{C})\in\mathcal D$, we must have $(b,C)\in\mathcal D$.  Let $\mathcal X=\{D\in\mathcal R : D\leq C\rightarrow (\exists b\in\mathcal{AR}\!\!\upharpoonright \!\! D)\ a\sqsubset b\ \&\ (b,C)\in\mathcal D\}$. $\mathcal X$ is a metric open subset of $\mathcal R$ and therefore, by Theorem \ref{abiertos}, it is $\mathcal U$-Ramsey. Take $\hat{C}\in\mathcal U\cap[\depth_C(a),C]$ such that $[a,\hat{C}]\subseteq\mathcal X$ or $[a,\hat{C}]\cap\mathcal X=\emptyset$. We will show that the first alternative holds: Pick $A\in\mathcal U\cap[a,\hat{C}]$ and $(a',A')\in\mathcal D$ such that $(a',A')\leq (a,A)$. Notice that $a\sqsubseteq a'$ and therefore, as we pointed out at the end of the previous paragraph, we have $(a',C)\in\mathcal D$. By the definition of $\mathcal X$, we also have $A'\in\mathcal X$. Now choose $A''\in\mathcal U\cap[a,A']$. Then $(a', A'')$ is also in $\mathcal D$ and therefore $A''\in\mathcal X\cap[a,\hat{C}]$. This implies $[a,\hat{C}]\subseteq\mathcal X$. Finally, that $A$ captures $(a,\mathcal D)$ follows from the definition of $\mathcal X$ and the fact that $[a,A]\subseteq [a,\hat{C}]\subseteq [a,\hat{C}]$. This completes the proof.
\end{proof}

\begin{thm}\label{forcing M_U}
Let $\mathcal U\subseteq\mathcal R$ be a selective ultrafilter, in a given transitive model of $ZF+DCR$. Forcing over $M$ with $\mathbb M_{\mathcal U}$ adds a generic $g\in\mathcal R$ with the property that $g\leq^*A$ for all $A\in\mathcal U$. In fact, $B\in\mathcal R$ is $\mathbb M_{\mathcal U}$-generic over $V$ if and only if $B\leq^* A$ for all $A\in\mathcal U$. Also, $V[\mathcal U][g]=V[g]$.
\end{thm}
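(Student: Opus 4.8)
The plan is to treat the three assertions in turn: I first extract the generic real from the stems of the generic filter, then establish the forward direction and the first assertion by a density argument, and finally I prove the genericity characterization (the reverse direction) using the capturing Lemma~\ref{lemma captures} together with selectivity of $\mathcal U$.

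First I would describe the generic object. If $G$ is $\mathbb{M}_{\mathcal U}$-generic over $M$, the stems $\{a:(a,A)\in G\ \text{for some }A\}$ form a $\sqsubseteq$-chain, since any two conditions in $G$ have a common extension and hence $\sqsubseteq$-comparable stems; by genericity this chain is cofinal and determines a sequence in $\mathcal{AR}^{\mathbb N}$. Because $\mathcal R$ is metrically closed, this sequence equals $r_\bullet(g)$ for a unique $g\in\mathcal R$, and $g\in[a,A]$ for every $(a,A)\in G$. For the first assertion and the \emph{only if} direction, fix $A\in\mathcal U$ and set $E_A=\{(b,C)\in\mathbb{M}_{\mathcal U}:C\le A\}$, which is clearly open. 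To see it is dense, given $(b_0,C_0)$ the filter property (Definition~\ref{ultra}(a)) yields $C_1\in\mathcal U$ with $C_1\le C_0$ and $C_1\le A$, and ${\bf A3}\bmod\mathcal U$ (Definition~\ref{ultra}(c)) lets me extend $b_0$ to a stem $b\sqsupseteq b_0$ realized inside $C_1$ with $[b,C_1]\subseteq[b_0,C_0]$, so $(b,C_1)\le(b_0,C_0)$ lies in $E_A$. Hence any generic $B$ meets $E_A$: there is $(b,C)\in E_A$ with $B\in[b,C]$, whence $B\le C\le A$ and $[b,B]\subseteq[b,A]$, i.e. $B\le^* A$.

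The heart of the argument is the \emph{if} direction. Suppose $B\in\mathcal R$ satisfies $B\le^* A$ for every $A\in\mathcal U$; I must show $B$ meets every dense open $\mathcal D\in M$ of $\mathbb{M}_{\mathcal U}$. Fix $A_0\in\mathcal U$ and, for $a\in\mathcal{AR}\!\!\upharpoonright\!\! A_0$, put $\mathcal D_a=\{C\in\mathcal U\cap[\depth_{A_0}(a),A_0]:C\ \text{captures}\ (a,\mathcal D)\}$. By Lemma~\ref{lemma captures} each $\mathcal D_a$ is nonempty, and since $\mathcal D$ is open one checks each $\mathcal D_a$ is dense open in $\mathcal U\cap[\depth_{A_0}(a),A_0]$. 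Applying Lemma~\ref{filtered} and then selectivity of $\mathcal U$, I obtain $A^*\in\mathcal U$ diagonalizing a filtered subfamily $(A_a)_a$ with $A_a\in\mathcal D_a$, so that $[a,A^*]\subseteq[a,A_a]$ for all relevant $a$. Now $B\le^* A^*$ supplies a stem $a_0\sqsubset B$ with $[a_0,B]\subseteq[a_0,A^*]\subseteq[a_0,A_{a_0}]$; in particular $B\in[a_0,A_{a_0}]$. Since $A_{a_0}$ captures $(a_0,\mathcal D)$ (Definition~\ref{captures}), there is $m>|a_0|$ with $(r_m(B),A_{a_0})\in\mathcal D$, and $B\in[r_m(B),A_{a_0}]$, so $B$ meets $\mathcal D$.

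Finally, for $M[\mathcal U][g]=M[g]$ it suffices to recover $\mathcal U$ inside $M[g]$, and I claim $\mathcal U=\{A\in\mathcal R^{M}:g\le^* A\}$. The inclusion $\subseteq$ is the first assertion. For $\supseteq$, suppose $g\le^* A$ but $A\notin\mathcal U$; by maximality of $\mathcal U$ there is $U\in\mathcal U$ with no common lower bound with $A$ in $(\mathcal R,\le)$. But $g\le^* U$ and $g\le^* A$ give, below a common stem $a\sqsubset g$, that $[a,g]\subseteq[a,U]\cap[a,A]$, so $g\in[a,g]$ is $\le U$ and $\le A$, a common lower bound -- a contradiction. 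Since $\mathcal R^{M}$ and $\le^*$ are available in $M[g]$, this defines $\mathcal U$ there, so $\mathcal U\in M[g]$ and the equality follows. I expect the main obstacle to be the \emph{if} direction: correctly packaging the capturing sets into a dense-open filtered family so that one selective diagonalization, combined with $B\le^* A^*$, lands $B$ inside a condition of $\mathcal D$.
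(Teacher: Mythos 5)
Your overall strategy is the same as the paper's: a density argument for the forward direction, the capturing Lemma~\ref{lemma captures} plus a selective diagonalization for the converse, and recovery of $\mathcal U$ from $g$ for the last clause. Your organization of the converse is in fact a little cleaner than the paper's (you capture $(a,\mathcal D)$ for \emph{every} $a\in\mathcal{AR}\!\upharpoonright\! A_0$ and diagonalize once, rather than building a chain $(A_n)$ along the approximations of $B$, which is awkward when $B\notin V$). However, there is one genuine gap in the converse direction, at exactly the point the paper spends its last paragraph on.

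The capturing property of $A_{a_0}$ is established \emph{in the ground model} $V$: Lemma~\ref{lemma captures} gives you, in $V$, the statement ``for all $C\in[a_0,A_{a_0}]$ there exists $m>|a_0|$ with $(r_m(C),A_{a_0})\in\mathcal D$.'' The $B$ you are testing for genericity is an arbitrary element of $\mathcal R$ in the universe and need not belong to $V$ (indeed, in Corollary~\ref{Mu generic} one applies the characterization to $A\leq B$ with $B$ generic, so $A$ is typically not in $V$). A universally quantified statement proved in $V$ only covers the $C\in[a_0,A_{a_0}]$ that lie in $V$, so you cannot simply ``apply Definition~\ref{captures}'' to your $B$. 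This is a $\Pi^1_1$-type statement and it does transfer, but that requires an argument: the paper forms the set $\mathcal F$ of finite approximations $b\in r_n[a_0,A_{a_0}]$ with $(b,A_{a_0})\notin\mathcal D$, observes that $(\mathcal F,\sqsubset)$ is well-founded in $V$ precisely because $A_{a_0}$ captures $(a_0,\mathcal D)$ there, and invokes absoluteness of well-foundedness (Mostowski) to conclude the capturing statement in the universe, after which your final step goes through. Without this your argument only proves the characterization for $B\in V$. As a much smaller point, in the forward direction your density proof for $E_A=\{(b,C):C\leq A\}$ invokes ${\bf A3}\bmod\mathcal U$ with stem $b_0$, but $[b_0,C_1]$ may be empty; you need the compatibility clause (a)(2) of Definition~\ref{ultra} to first produce an element of $\mathcal U$ lying in $[b_0,C_0]$ and below $A$ before ${\bf A3}\bmod\mathcal U$ applies. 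This is repairable and comparable to the level of detail in the paper's own proof, but the missing absoluteness step is a real omission.
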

\begin{proof}

Suppose that $B\in\mathcal R$ is $\mathbb M_{\mathcal U}$-generic over $V$. Fix an arbitrary $A\in\mathcal U$. The set $\{(c,C)\in\mathbb M_{\mathcal U} : C\leq^*A\}$ is dense open and is in $V$: Fix $(a,A')\in\mathbb M_{\mathcal U}$. Choose $C_0\in\mathcal U$ such that $C_0\leq A, A'$. Since $\mathcal U$ is an ultrafilter, we can choose $C_1\mathcal U$ and $n\in \omega$ such that $[n,C_1]\subseteq[a,A']\cap[1,C_0]$. Let $c=r_n(C_1)$. By ${\bf A3}$ mod $\mathcal U$, there exists $C_2\in\mathcal U\cap[\depth_{A'}(c),A']$ such that $\emptyset\neq[c,C_2]\subseteq[c,C_1]$. It is clear that $[c,C_2]\subseteq[c,A]$ and therefore $C_2\leq^*A$. Also, since $\depth_{A'}(c)\geq\depth_{A'}(a)$, we have $[a,C_2]\neq\emptyset$. Thus,  $(a,C_2)\leq (a,A')$. That is, $\mathcal D$ is dense. It is obviously open. So, by genericity, there exists one $(c,C)\in\mathcal D$ such that $B\in[c,C]$. Hence $B\leq^*A$.

Now, suppose that $B\in\mathcal R$ is such that $B\leq^*A$ for all $A\in\mathcal U$, and let $\mathcal D$ be a dense open subset of $\mathbb M_{\mathcal U}$. We need to find $(a,A)\in\mathcal D$ such that $B\in[a,A]$. In $V$, by using Lemma \ref{lemma captures} iteratively, we can define a sequence $(A_n)_n$ such that $A_n\in\mathcal U$, $A_{n+1}\leq A_n$, and $A_n$ captures $(r_n(B),\mathcal D)$. Since $\mathcal U$ is in $V$ and selective, we can choose $A\in\mathcal U$, in $V$, such that $A\leq^*A_n$ for all $n$. By our assupmtion on $B$, we have $B\leq^*A$. So there exists an $a\in\mathcal{AR}$ such that $[a,B]\subseteq[a,A]$. Let $m=\depth_B(a)$. By ${\bf A3}$ mod $\mathcal U$, we can assume that $a=r_m(B)=r_m(A)$, and also that $A\in[r_m(B),A_m]$. Therefore, $B\in[m,A]$ and $A$ captures $(r_m(B),\mathcal D)$. Hence, the following is true in $V$: 

\begin{equation}\label{eq captures}
(\forall C\in[m,A]) (\exists n>m) ((r_n(C),A)\in\mathcal D).
\end{equation}

Let $\mathcal F=\{b : (\exists n>m)(b\in r_n[m,A]\ \&\ (b,A)\notin\mathcal D)\}$ and give $\mathcal F$ the strict end-extension ordering $\sqsubset$. Then the relation $(\mathcal F,\sqsubset)$ is in $V$, and by equation \ref{eq captures} $(\mathcal F,\sqsubset)$ is well-founded. Therefore, by a well-known argument due to Mostowski, equation \ref{eq captures} holds in the universe. Hence, since $B\in[m,A]$, there exists $n>m$ such that $(r_n(B),A)\in\mathcal D$. But $B\in[r_n(B),A]$, so $B$ is $\mathbb M_{\mathcal U}$--generic over $V$.

Finally, if $g$ is $\mathbb M_{\mathcal U}$--generic over $V$, then, in $V$, $\mathcal U=\{A\in V : A\in\mathcal R\ \&\ g\leq^*A\}$ and therefore $V[\mathcal U][g]=V[g]$.

\end{proof}
\begin{cor} \label{Mu generic}
If $B$ is $\mathbb M_{\mathcal U}$--generic over some model $V$ and $A\leq B$ then $A$ is also $\mathbb M_{\mathcal U}$--generic over $V$.
\end{cor}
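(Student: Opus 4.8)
The plan is to deduce this immediately from the characterization of genericity established in Theorem \ref{forcing M_U}, namely that $B\in\mathcal R$ is $\mathbb M_{\mathcal U}$--generic over $V$ if and only if $B\leq^* A'$ for every $A'\in\mathcal U$. So I would reduce the statement to showing that $A\leq^* A'$ for every $A'\in\mathcal U$, using the hypotheses $A\leq B$ together with $B\leq^* A'$ for all such $A'$ (the latter being exactly what genericity of $B$ buys us via the theorem).

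The argument then rests on two elementary facts about the almost-reduction relation. First, I would observe that $A\leq B$ implies $A\leq^* B$: taking $a=\emptyset=r_0(A)$, which lies in $\mathcal{AR}\!\!\upharpoonright\!\!A$ since $A\in[\emptyset,A]$ by reflexivity of $\leq$, one has $[\emptyset,A]=\{C\in\mathcal R : C\leq A\}$ and $[\emptyset,B]=\{C\in\mathcal R : C\leq B\}$, and transitivity of $\leq$ gives $[\emptyset,A]\subseteq[\emptyset,B]$, which is precisely $A\leq^* B$. Second, $\leq^*$ is transitive, as recalled from \cite{Mijares}. Combining these, from $A\leq B$ and $B\leq^* A'$ I get first $A\leq^* B$ and then $A\leq^* A'$, for every $A'\in\mathcal U$. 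By the characterization above, $A$ is $\mathbb M_{\mathcal U}$--generic over $V$, as desired.

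I do not expect any genuine obstacle here: essentially all of the combinatorial content has already been absorbed into Theorem \ref{forcing M_U}, so the corollary becomes a one-line consequence once that characterization is in hand. The only point requiring a moment's care is the verification that $\leq$ refines $\leq^*$, i.e.\ that $\emptyset$ serves as a witness; this is exactly where the normalization $r_0(A)=\emptyset$ (axiom (A.1.1)) and the reflexivity of $\leq$ are used, so I would make sure to spell out that $[\emptyset,A]$ and $[\emptyset,B]$ coincide with the respective lower cones of $\leq$.
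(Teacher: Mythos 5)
Your proof is correct and is exactly the argument the paper intends: the corollary is stated without proof precisely because it follows immediately from the characterization in Theorem \ref{forcing M_U} together with the facts that $\leq$ refines $\leq^*$ (witnessed by $a=\emptyset$) and that $\leq^*$ is transitive. Your extra care in verifying that $[\emptyset,A]$ and $[\emptyset,B]$ are the lower cones of $\leq$ is a welcome, if routine, addition.
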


\begin{lem}\label{Mu iteration}
Let $\mathcal H\subseteq\mathcal R$ be a semiselective coideal. Consider the forcing notion $\mathbb P = (\mathcal H,\leq^*)$ and let $\hat{\mathcal U}$ be a $\mathbb P$--name for a $\mathbb P$--generic ultrafilter. Then the iteration $\mathbb P *\mathbb M_{\hat{\mathcal U}}$ is equivalent to the forcing $\mathbb M_{\mathcal H}$.
\end{lem}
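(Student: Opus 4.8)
The plan is to establish the two-step iteration $\mathbb{P} * \mathbb{M}_{\hat{\mathcal U}}$ is forcing-equivalent to $\mathbb{M}_{\mathcal H}$ by exhibiting a dense embedding (or a dense subset isomorphism) between the two posets. A condition in $\mathbb{P} * \mathbb{M}_{\hat{\mathcal U}}$ has the form $(C, (\dot{a}, \dot{A}))$ where $C \in \mathcal H$ and $C$ forces $(\dot{a}, \dot{A})$ to be a condition in $\mathbb{M}_{\hat{\mathcal U}}$, i.e. $C \Vdash \dot{A} \in \hat{\mathcal U}$ and $\dot{a} \in \mathcal{AR}\!\!\upharpoonright\!\!\dot{A}$. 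The key observation is that by Lemma \ref{ramsey u}, forcing with $\mathbb{P}$ adds no new elements of $\mathcal{AR}^{\mathbb N}$, so the name $\dot{a}$ for a finite approximation can be decided: below some $C' \leq^* C$, the value of $\dot{a}$ is a fixed $a \in \mathcal{AR}$. The natural map I would use sends such a decided condition to $(a, C') \in \mathbb{M}_{\mathcal H}$, using $C'$ itself (intersected appropriately with the forced $\dot{A}$) as the second coordinate.

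First I would show the embedding is well-defined and order-preserving. The main point is that $C \Vdash \dot{A} \in \hat{\mathcal U}$ together with genericity forces $C \leq^* \dot{A}$ (by Theorem \ref{forcing M_U}, the generic for $\mathbb{M}_{\hat{\mathcal U}}$ is almost-reduction below every set in $\hat{\mathcal U}$, and by Lemma \ref{ramsey u} the generic ultrafilter $\mathcal U$ consists of the ground-model sets almost-reducing the generic). So after deciding $\dot a = a$ and passing below, one may assume $C \leq \dot{A}$ outright and the relevant Ellentuck neighborhood $[a, C]$ sits inside $[a, \dot{A}]$. Then a condition of $\mathbb{P} * \mathbb{M}_{\hat{\mathcal U}}$ is captured, up to the dense set of decided conditions, by the single pair $(a, C) \in \mathbb{M}_{\mathcal H}$. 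Conversely, given $(a, A) \in \mathbb{M}_{\mathcal H}$ with $A \in \mathcal H$, I would map it to $(A, (\check{a}, \check{A}))$, checking that $A \Vdash \check{A} \in \hat{\mathcal U}$; this holds because any $\mathbb{P}$-generic filter containing $A$ places $A$ in $\hat{\mathcal U}$ by the standard fact (Lemma \ref{ramsey u}, \ref{selective u}) that the generic ultrafilter contains every condition in the generic filter.

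The two directions should be shown to be mutually cofinal/dense, yielding equivalence of the separative quotients. I would verify density of the image: given any $(C, (\dot a, \dot A))$, refine $C$ to decide $\dot a = a$ and refine again (using $\sigma$-distributivity from Lemma \ref{sigma d} and $\mathbf{A3}$ mod $\mathcal H$) to get $C' \in \mathcal H$ with $C' \leq C$, $[a, C'] \subseteq [a, \dot A]$, and $C' \Vdash \dot A \in \hat{\mathcal U}$; then $(a, C')$ lies in the image and refines the original condition. Compatibility and incompatibility should be shown to match across the map, which is where the filter axiom (Definition \ref{ultra}(a)(\ref{ultra finite intersection})) and the fact that $\mathcal U$ is selective (Lemma \ref{selective u}) do the work of amalgamating two conditions with a common approximation.

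The hard part will be handling the name $\dot A$ for the second coordinate cleanly: unlike $\dot a$, the set $\dot A \in \hat{\mathcal U}$ is genuinely a name for an element of the generic ultrafilter and need not be a ground-model object decided by a single condition. The crucial device is Theorem \ref{forcing M_U} applied inside the $\mathbb P$-extension, which characterizes $\mathbb{M}_{\hat{\mathcal U}}$-genericity as being $\leq^*$ below every member of $\hat{\mathcal U}$; combined with the fact (Lemma \ref{ramsey u}) that $\mathbb P$ adds no new reals, this lets one replace the pair $(\dot a, \dot A)$ by a ground-model almost-reduction condition. Thus the two-step iteration collapses to a single Mathias-style condition $(a, C)$ in $\mathbb{M}_{\mathcal H}$, and the resulting correspondence is the desired dense embedding witnessing the equivalence $\mathbb{P} * \mathbb{M}_{\hat{\mathcal U}} \cong \mathbb{M}_{\mathcal H}$.
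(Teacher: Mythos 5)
Your overall architecture is the same as the paper's: exhibit the dense embedding $(a,A)\mapsto (A,(\check a,\check A))$ from $\mathbb M_{\mathcal H}$ into $\mathbb P *\mathbb M_{\hat{\mathcal U}}$, use the fact that $\mathbb P$ adds no new elements of $\mathcal{AR}^{\mathbb N}$ (equivalently $\sigma$-distributivity, Lemma \ref{sigma d}) to decide the names $\dot a$ and $\dot A$ below a given condition, and then amalgamate. However, the justification you give for the crucial amalgamation step contains a genuine error. You claim that $C\Vdash \dot A\in\hat{\mathcal U}$ ``forces $C\leq^*\dot A$,'' so that ``one may assume $C\leq\dot A$ outright.'' This is false: $C$ and $\dot A$ are both forced into the generic filter $\hat{\mathcal U}$, and two members of a filter are merely \emph{compatible} in $(\mathcal H,\leq^*)$, not comparable. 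Your appeal to Theorem \ref{forcing M_U} here is a category error: that theorem characterizes the $\mathbb M_{\hat{\mathcal U}}$-generic \emph{real} added by the second step of the iteration as an almost-reduction of every member of $\hat{\mathcal U}$; it says nothing about a condition $C$ of the first-step forcing $\mathbb P$, which is an arbitrary element of the generic filter and need not lie $\leq^*$-below the particular element named by $\dot A$.

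The correct repair --- and what the paper actually does --- is to use the finite-intersection property of the ultrafilter (Definition \ref{ultra}(a)(\ref{ultra finite intersection})): after deciding $\dot a=\check a$ and $\dot A=\check A$ below some $C\leq^* B$, note that $C$ forces both $\check C$ and $\check A$ into $\hat{\mathcal U}$, hence forces the existence of some $x\in\hat{\mathcal U}\cap[\hat a,\hat A]\cap[\hat a,\hat C]$; since $\mathbb P$ adds no new elements of $\mathcal R$ and $\hat{\mathcal U}\subseteq\check{\mathcal H}$, such an $x$ is (forced to be) a ground-model $D\in\mathcal H\cap[a,A]\cap[a,C]$, and $(a,D)$ is the desired preimage of a condition below $(B,(\dot a,\dot A))$. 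You do mention the filter axiom later, but only in connection with matching compatibility across the map, not where it is actually needed, namely in the density argument itself. As written, the central step of your proof does not go through.
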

\begin{proof}

Recall that $\mathbb P *\mathbb M_{\hat{\mathcal U}}=\{(B,(\dot{a},\dot{A})) : B\in\mathcal H\ \&\ B\vdash (\dot{a},\dot{A})\in \mathbb M_{\hat{\mathcal U}}\}$, with the ordering  $(B,(\dot{a},\dot{A}))\leq (B_0,(\dot{a}_0,\dot{A}_0)) \Leftrightarrow B\leq^* B_0 \ \&\ (\dot{a},\dot{A})\leq (B_0,(\dot{a}_0,\dot{A}_0)$. The mapping $(a,A) \rightarrow (A, (\hat{a}, \hat{A}))$ is a dense embedding (see \cite{golds}) from $\mathbb M_{\mathcal H}$ to $\mathbb P *\mathbb M_{\hat{\mathcal U}}$ (here $\hat{a}$ and $\hat{A}$ are the canonical $\mathbb P$-names for $a$ and $A$, respectively): It is easy to show that this mapping preserves the order. So, given $(B,(\dot{a},\dot{A}))\in\mathbb P *\mathbb M_{\hat{\mathcal U}}$, we need to find $(d,D)\in\mathbb M_{\mathcal H}$ such that $(D, (\hat{d}, \hat{D}))\leq (B,(\dot{a},\dot{A}))$. Since $\mathbb P$ is $\sigma$-distributive, there exists $a\in\mathcal{AR}$, $A\in\mathcal H$ and $C\leq* B$ in $\mathcal H$ such that $C\vdash_{\mathbb P} (\hat{a}=\dot{a}\ \&\ \hat{A}=\dot{A})$ (so we can assume $a\in\mathcal{AR}\!\!\upharpoonright \!\! C$). Notice that $(C, (\hat{a}, \hat{A}))\in\mathbb P *\mathbb M_{\hat{\mathcal U}}$ and $(C, (\hat{a}, \hat{A})), (C, (\hat{a}, \hat{A}))\leq (B,(\dot{a},\dot{A}))$. So, $C\vdash_{\mathbb P} \hat{C}\in\hat{\mathcal U}$ and $C\vdash_{\mathbb P} \hat{A}\in\hat{\mathcal U}$. Then, $C\vdash_{\mathbb P} (\exists x\in\hat{\mathcal U})(x\in[\hat{a},\hat{A}]\ \&\ x\in[\hat{a},\hat{C}]$. So there exists $D\in\mathcal H$ such that $D\in[a,A]\cap[a,C]$. Hence, $(D, (\hat{a}, \hat{D}))\leq (B,(\dot{a},\dot{A}))$. This completes the proof.
\end{proof}

\bigskip

The next Theorem follows inmediately from  Corollary \ref{Mu generic}  and Lemma \ref{Mu iteration}.

\begin{thm}\label{mathias prop}
If $\mathcal H\subseteq\mathcal R$ is a semiselective coideal then it has Mathias property.
\end{thm}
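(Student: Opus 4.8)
The final statement is Theorem \ref{mathias prop}: if $\mathcal H$ is a semiselective coideal, then it has the Mathias property. The plan is to derive this almost immediately from the two results the author flags just before it, namely Corollary \ref{Mu generic} and Lemma \ref{Mu iteration}, rather than reproving anything from scratch. Recall the Mathias property says: whenever $x$ is $\mathbb M_{\mathcal H}$-generic over a model $V$, every $y\leq x$ is also $\mathbb M_{\mathcal H}$-generic over $V$.

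First I would fix an $\mathbb M_{\mathcal H}$-generic $x$ over $V$ and an arbitrary $y\leq x$ (i.e. $y\in\mathcal R$ with $y\leq x$), and aim to show $y$ is $\mathbb M_{\mathcal H}$-generic. The key move is to transport the problem from $\mathbb M_{\mathcal H}$ to the iteration $\mathbb P * \mathbb M_{\hat{\mathcal U}}$, where $\mathbb P=(\mathcal H,\leq^*)$, using the dense embedding of Lemma \ref{Mu iteration}. Under this equivalence, the generic $x$ corresponds to a pair consisting of a $\mathbb P$-generic selective ultrafilter $\mathcal U$ (this is where Lemma \ref{selective u} and Lemma \ref{ramsey u} guarantee $\mathcal U$ is a genuine selective ultrafilter in the extension) together with an $\mathbb M_{\mathcal U}$-generic real over $V[\mathcal U]$. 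So the statement ``$x$ is $\mathbb M_{\mathcal H}$-generic over $V$'' translates into ``$x$ is $\mathbb M_{\mathcal U}$-generic over $V[\mathcal U]$'' for the associated selective ultrafilter $\mathcal U$.

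Once the problem is phrased in terms of $\mathbb M_{\mathcal U}$-genericity, Corollary \ref{Mu generic} does the real work: it states precisely that if $x$ is $\mathbb M_{\mathcal U}$-generic over a model and $y\leq x$, then $y$ is $\mathbb M_{\mathcal U}$-generic over that model as well. (This in turn rests on the characterization in Theorem \ref{forcing M_U}, that $\mathbb M_{\mathcal U}$-genericity of $B$ over $V$ is equivalent to $B\leq^* A$ for all $A\in\mathcal U$, an evidently downward-closed condition since $y\leq x\leq^* A$ gives $y\leq^* A$ by transitivity of $\leq^*$.) Applying the corollary to $\mathcal U$ yields that $y$ is $\mathbb M_{\mathcal U}$-generic over $V[\mathcal U]$, and pulling back along the dense embedding of Lemma \ref{Mu iteration} gives that $y$ is $\mathbb M_{\mathcal H}$-generic over $V$, as desired.

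The main obstacle, and the point requiring the most care, is the passage across the forcing equivalence of Lemma \ref{Mu iteration}: I must check that a dense embedding genuinely preserves and reflects the notion of ``$\mathbb M$-genericity of a real $x\in\mathcal R$'' (as opposed to genericity of a filter on the poset), and that the selective ultrafilter $\mathcal U$ extracted from $x$ is exactly the one for which $x$ witnesses $\mathbb M_{\mathcal U}$-genericity. In fact, the cleanest route may bypass the iteration machinery altogether and argue directly: since $x$ is $\mathbb M_{\mathcal H}$-generic, $\mathcal U=\{A\in\mathcal H^V : x\leq^* A\}$ is a selective ultrafilter, $x$ is $\mathbb M_{\mathcal U}$-generic over $V[\mathcal U]$, and then $y\leq x$ gives $y\leq^* A$ for every $A\in\mathcal U$, so by the characterization in Theorem \ref{forcing M_U} (equivalently Corollary \ref{Mu generic}) $y$ is $\mathbb M_{\mathcal U}$-generic, hence $\mathbb M_{\mathcal H}$-generic over $V$. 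Either way the substance is already contained in the cited results, so the proof should be short.
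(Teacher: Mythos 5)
Your proposal is correct and matches the paper's intended argument exactly: the paper offers no written proof, stating only that the theorem ``follows immediately from Corollary \ref{Mu generic} and Lemma \ref{Mu iteration},'' which is precisely the factorization through $\mathbb P * \mathbb M_{\hat{\mathcal U}}$ plus downward-closure of $\mathbb M_{\mathcal U}$-genericity that you describe. Your elaboration of the transfer across the dense embedding (and the alternative direct route via $\mathcal U=\{A\in\mathcal H^V : x\leq^* A\}$ and Theorem \ref{forcing M_U}) supplies detail the paper leaves implicit, but the substance is the same.
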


\section{Some applications}\label{app}

\subsection{Selectivity and genericity}

%\begin{jm}Compare the results in this Section with \cite{laflamme}\end{jm}
Let $\mathcal R$ be a topological Ramsey space. In this Section we will show that if the existence of a super compact cardinal is consistent then so is the statement ``every semiselective ultrafilter $\mathcal U\subseteq\mathcal R$ is generic over $L(\mathbb R)$". First, let us state the following definition (see \cite{farah}, Definition 4.10; and also \cite{FMSh,QMW,ShW}).

\begin{defn}
Let $M$ be a countable elementary submodel of some structure of the form $H_{\theta}$ which contains a poset $\mathcal P$ and a $\mathcal P$--name $\hat{r}$ for a real. Then we say that $M$ is $(L(\mathbb R), \mathcal P)$--\textbf{correct} if for every $(M,\mathcal P)$--generic filter $G\subseteq \mathcal P\cap M$ and every formula $\phi(x,\overrightarrow{p})$ with parameter $\overrightarrow{p}$ in $M$, the formula $\phi(val_G(\hat{r}),\overrightarrow{p})$ is true in $L(\mathbb R)$ if and only if there exists a condition in $G$ which forces this. We say that \textbf{truth in  $L(\mathbb R)$ is unchangeable by forcing} if the following condition is satisfied: For every poset $\mathcal P$ there exists $\theta$ large enough so that there exist stationarily many countable ementary submodels $M$ of  $H_{\theta}$ which are $(L(\mathbb R), \mathcal P)$--correct.
\end{defn}

We will also need the following two lemmas. For the proof of Lemma \ref{truth} see \cite{FMSh,QMW,ShW}. And for the proof of Lemma \ref{truth reals} see \cite{farah}.

\begin{lem}\label{truth}
If there exist a supercompact cardinal, then truth in $L(\mathbb R)$ is unchangeable by forcing.
\end{lem}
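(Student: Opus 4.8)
The plan is to unpack the statement and reduce it to Woodin's generic absoluteness theorem for $L(\mathbb{R})$, which carries all the real content. By the definition above, ``truth in $L(\mathbb{R})$ is unchangeable by forcing'' means precisely that for every poset $\mathcal{P}$ there is a cardinal $\theta$ such that stationarily many countable $M \prec H_\theta$ are $(L(\mathbb{R}),\mathcal{P})$--correct. The engine is the theorem of Woodin that, in the presence of sufficiently many Woodin cardinals, every set of reals definable in $L(\mathbb{R})$ is universally Baire and the first--order theory of $L(\mathbb{R})$ with real parameters is invariant under set forcing. A supercompact cardinal $\kappa$ supplies this input for an \emph{arbitrary} poset: given $\mathcal{P}$, a supercompactness embedding moving $\kappa$ above $|\mathcal{P}|$ places $\mathcal{P}$ below a cardinal with the requisite Woodin cardinals beneath it, which is exactly the hypothesis that Woodin's theorem consumes.

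First I would fix $\mathcal{P}$, a $\mathcal{P}$--name $\hat r$ for a real, and a parameter tuple $\overrightarrow{p}$, and then choose $\theta$ large enough that $H_\theta$ contains $\mathcal{P}$, $\hat r$, and, for each relevant formula $\phi$, a pair of trees $(T_\phi,S_\phi)$ that universally Baire--represent the set $\{x : \phi(x,\overrightarrow{p})^{L(\mathbb{R})}\}$ and its complement. The role of these representations is that the statement ``$q \Vdash \hat r\in p[T_\phi]$'' is absolute and, evaluated on any generic real, $p[T_\phi]$ computes the true $L(\mathbb{R})$--extension of $\phi$. I would then take the club of countable $M \prec H_\theta$ containing $\mathcal{P}$, $\hat r$, $\overrightarrow{p}$ and all the $T_\phi,S_\phi$. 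For such an $M$ and any $(M,\mathcal{P})$--generic $G$ (which exists in $V$ because $M$ is countable) the real $\mathrm{val}_G(\hat r)$ lies in $V$; by genericity $G$ meets the dense set of conditions deciding $\phi(\hat r,\overrightarrow{p})^{L(\mathbb{R})}$, and combining $M \prec H_\theta$ with absoluteness of the tree representation yields
\[
L(\mathbb{R}) \models \phi(\mathrm{val}_G(\hat r),\overrightarrow{p}) \iff \mathrm{val}_G(\hat r)\in p[T_\phi] \iff (\exists q\in G)\ q \Vdash \phi(\hat r,\overrightarrow{p})^{L(\mathbb{R})}.
\]
Hence every $M$ in the club is $(L(\mathbb{R}),\mathcal{P})$--correct, and since a club is in particular stationary, the desired conclusion follows.

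The main obstacle is entirely the input: Woodin's generic--absoluteness theorem and the universally Baire representability of the $L(\mathbb{R})$--definable sets from the large cardinal, whose proofs (via the stationary tower, iteration trees, and derived--model techniques) lie well outside the scope of this paper and are the reason we only cite \cite{FMSh,QMW,ShW}. The one genuine subtlety in the reduction itself is that $G$ is generic over the countable model $M$ rather than over $V$, so $\mathrm{val}_G(\hat r)$ need not arise from a $V$--generic extension; this is exactly what universal Baireness is designed to absorb, since the property that $p[T_\phi]$ and $p[S_\phi]$ project to complements persists into $M[G]$ and thereby pins down the $L(\mathbb{R})$--truth of $\mathrm{val}_G(\hat r)$ correctly. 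Granting Woodin's theorem, the club--many--correctness argument is a routine reflection.
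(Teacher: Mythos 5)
There is a mismatch of genre here that you could not have known about: the paper does not prove Lemma \ref{truth} at all. It is stated as a black box, with the reader referred to \cite{FMSh,QMW,ShW}; the definition of $(L(\mathbb R),\mathcal P)$--correctness and the lemma itself are taken from Farah \cite{farah}, who in turn attributes the result to Woodin. So there is no internal argument to compare yours against, and by sketching a proof at all you are doing more than the paper does. Taken on its own terms, your outline is the standard route through those references: obtain from the supercompact, for each formula $\phi$ and parameters, trees $T_\phi,S_\phi$ whose projections are complements in every $\mathcal P$--generic extension and compute $\{x : L(\mathbb R)\models\phi(x,\overrightarrow{p})\}$; put them into $H_\theta$; and check that any countable $M\prec H_\theta$ containing them is $(L(\mathbb R),\mathcal P)$--correct, since for an $(M,\mathcal P)$--generic $G$ the real $val_G(\hat r)$ lands in $p[T_\phi]$ or in $p[S_\phi]$ according to what some condition in $G$ forces, by absoluteness of wellfoundedness between $M[G]$ and $V$. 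Your identification of the $M$--generic--versus--$V$--generic issue as exactly the point where universal Baireness is consumed is correct. Two cautions. First, $(L(\mathbb R),\mathcal P)$--correctness of $M$ must hold for \emph{all} formulas and \emph{all} parameters in $M$ simultaneously, not for a fixed $\phi$ and $\overrightarrow{p}$ chosen before $\theta$; you need $H_\theta$ (hence $M$, by elementarity and closing off) to contain trees for a universal family, a routine but necessary adjustment to your quantifier order. Second, your one-line derivation of the hypothesis of Woodin's theorem from supercompactness (``an embedding moving $\kappa$ above $|\mathcal P|$ places $\mathcal P$ below a cardinal with the requisite Woodin cardinals beneath it'') is garbled as stated; the actual content is the Feng--Magidor--Woodin and Shelah--Woodin theorems that a supercompact cardinal makes every set of reals in $L(\mathbb R)$ universally Baire and the theory of $L(\mathbb R)$ generically absolute, and you are right that this input is the whole substance of the lemma and cannot be reproduced here.
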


\begin{lem}\label{truth reals}
Assume that truth in $L(\mathbb R)$ is unchangeable by forcing. If $E$ is a ccc space and $f: E\rightarrow \mathbb R$ is continuous then for every set of reals  $\mathcal X$ from $L(\mathbb R)$, the set $f^{-1}(\mathcal X)$ has the Baire property.
\end{lem}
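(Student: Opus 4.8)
The plan is to realize $E$ as a forcing notion and reduce the conclusion to producing a single open set that agrees with $f^{-1}(\mathcal X)$ modulo a meager set. Let $\mathcal P=\mathcal P_E$ be the poset whose conditions are the nonempty open subsets of $E$, ordered by inclusion; since $E$ is ccc, $\mathcal P$ is a ccc forcing, its generic object is a point $x_{\mathrm{gen}}\in E$, and because $f$ is continuous the value of each bit of $f(x_{\mathrm{gen}})$ is decided by an open condition, so there is a canonical $\mathcal P$-name $\hat r$ for the real $f(x_{\mathrm{gen}})$. Fix a formula $\varphi$ and a parameter $p_0$ with $\mathcal X=\{y\in\mathbb R : L(\mathbb R)\models\varphi(y,p_0)\}$. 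First I would invoke the hypothesis: applying that truth in $L(\mathbb R)$ is unchangeable by forcing to $\mathcal P$, there is $\theta$ with stationarily many countable $M\prec H_\theta$ that are $(L(\mathbb R),\mathcal P)$-correct; as the models containing $\mathcal P,E,f$ and $p_0$ form a club, I may fix one correct $M$ carrying all these objects as members (a single fixed object, even an uncountable ordinal hidden in $p_0$, can belong to a countable elementary submodel). Only the \emph{existence} of one such $M$ is used.

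Next, and this is where the ccc assumption does its work, I would show that
\[
\mathrm{Gen}(M)=\{x\in E : G_x:=\{p\in\mathcal P\cap M : x\in p\}\ \text{is}\ (M,\mathcal P)\text{-generic}\}
\]
is comeager. For each dense open $D\in M$, ccc yields a countable maximal antichain $A\subseteq D$; by elementarity and countability $A\in M$ and $A\subseteq M$, and $\bigcup A$ is a dense open subset of $E$ contained in $\bigcup(D\cap M)$. Intersecting these comeager open sets over the countably many dense open $D\in M$ produces a comeager subset of $\mathrm{Gen}(M)$.

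Finally I would identify $f^{-1}(\mathcal X)$ with an open set off a meager set. For $x\in\mathrm{Gen}(M)$ the filter $G_x$ is $(M,\mathcal P)$-generic, and by continuity of $f$ together with genericity one checks $\mathrm{val}_{G_x}(\hat r)=f(x)$. Correctness of $M$ then gives, for every $x\in\mathrm{Gen}(M)$,
\[
f(x)\in\mathcal X \iff L(\mathbb R)\models\varphi(f(x),p_0) \iff (\exists p\in G_x)\ p\Vdash_{L(\mathbb R)}\varphi(\hat r,p_0).
\]
Setting $W=\{p\in\mathcal P\cap M : p\Vdash_{L(\mathbb R)}\varphi(\hat r,p_0)\}$ and $U=\bigcup W$ (an open subset of $E$), the right-hand side says precisely that $x\in U$. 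Hence $f^{-1}(\mathcal X)\cap\mathrm{Gen}(M)=U\cap\mathrm{Gen}(M)$, so $f^{-1}(\mathcal X)\,\triangle\,U\subseteq E\setminus\mathrm{Gen}(M)$ is meager, and therefore $f^{-1}(\mathcal X)$ has the Baire property.

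The hard part will be the two places where the hypotheses are genuinely consumed: verifying $\mathrm{val}_{G_x}(\hat r)=f(x)$ (the sole role of continuity, and the reason $\hat r$ is a legitimate name for a real) and ensuring that the $L(\mathbb R)$-forcing relation $\Vdash_{L(\mathbb R)}$ appearing in the correctness clause matches the computation carried out on $\mathrm{Gen}(M)$. By contrast, the comeagerness of $\mathrm{Gen}(M)$ is exactly where ccc is needed, and the final passage from ``equal to an open set modulo a meager set'' to the Baire property is routine.
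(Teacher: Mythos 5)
The paper does not actually prove this lemma—it defers to \cite{farah}—and your argument is precisely the standard proof given there: force with the ccc poset of nonempty open subsets of $E$, use $(L(\mathbb R),\mathcal P)$-correctness of a single countable elementary submodel $M$ together with the comeagerness of the set of $M$-generic points (via countable maximal antichains refining each dense set in $M$) to write $f^{-1}(\mathcal X)$ as an open set modulo a meager set. The only cosmetic caution is that the generic filter of open sets need not converge to an actual point $x_{\mathrm{gen}}\in E$, but your argument never uses such a point: $\hat r$ is defined directly from the conditions deciding $f$ to within $1/n$, and the verification $\mathrm{val}_{G_x}(\hat r)=f(x)$ that you flag goes through exactly as you indicate.
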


Now we are ready to prove the following. 
\begin{thm}\label{complete comb}
If there exists a super compact cardinal, then every selective coideal $\mathcal U\subseteq\mathcal R$ is $(\mathcal R,\leq^*)$--generic over $L(\mathbb R)$.
\end{thm}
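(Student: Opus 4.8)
The plan is to show that $\mathcal{U}$, regarded as a filter on the poset $(\mathcal{R},\leq^*)$, meets every dense open $\mathcal{D}\subseteq(\mathcal{R},\leq^*)$ with $\mathcal{D}\in L(\mathbb{R})$; this is precisely genericity over $L(\mathbb{R})$. Throughout I treat $\mathcal{U}$ as a selective \emph{ultrafilter} (the object of the abstract), so in particular it is a semiselective coideal and every result of Sections \ref{semisele}--\ref{semisele forcing} applies. The whole argument reduces to one \textbf{Main Lemma}: under the hypothesis, every $\mathcal{X}\subseteq\mathcal{R}$ with $\mathcal{X}\in L(\mathbb{R})$ is $\mathcal{U}$-Ramsey. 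Granting this, fix a dense open $\mathcal{D}\in L(\mathbb{R})$ and any $A\in\mathcal{U}$; applying the Main Lemma to $[\emptyset,A]$ yields $B\in[\emptyset,A]\cap\mathcal{U}$ with $[\emptyset,B]\subseteq\mathcal{D}$ or $[\emptyset,B]\cap\mathcal{D}=\emptyset$. The second alternative is impossible: by density there is $D\in\mathcal{D}$ with $D\leq^*B$, so $[d,D]\subseteq[d,B]\subseteq[\emptyset,B]$ for the witnessing $d\sqsubseteq D$, and since every $E\in[d,D]$ satisfies $E\leq D\leq^* D$ hence $E\leq^* D$, openness of $\mathcal{D}$ under $\leq^*$ gives $[d,D]\subseteq\mathcal{D}\cap[\emptyset,B]\neq\emptyset$, a contradiction. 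Thus $[\emptyset,B]\subseteq\mathcal{D}$, so $B\in\mathcal{U}\cap\mathcal{D}$ and $\mathcal{U}$ meets $\mathcal{D}$.

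It remains to prove the Main Lemma, and this is where the large cardinal enters. By Lemma \ref{truth} the supercompact makes truth in $L(\mathbb{R})$ unchangeable by forcing, so Lemma \ref{truth reals} is available. Since $\mathcal{U}$ is semiselective, Theorem \ref{baire-ramsey} reduces the Main Lemma to showing that each such $\mathcal{X}$ is $\mathcal{U}$-Baire, i.e.\ has the Baire property for the topology generated by the neighborhoods $[a,A]$ with $A\in\mathcal{U}$ (recall from the remark after Theorem \ref{baire-ramsey} that $(\mathcal{R},\mathcal{U},\leq,r)$ is then a Ramsey space, so $\mathcal{U}$-Ramsey and $\mathcal{U}$-Baire coincide). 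To produce this Baire property I would pass to the Mathias poset $\mathbb{M}_{\mathcal{U}}$. This forcing is $\sigma$-centered: conditions sharing a stem $a$ are pairwise compatible, since for $(a,A),(a,B)$ with $A,B\in\mathcal{U}$ the ultrafilter property (Definition \ref{ultra}(a)) yields $C\in\mathcal{U}\cap[a,A]\cap[a,B]$ with $(a,C)\leq(a,A),(a,B)$, and there are only countably many stems because $\mathcal{AR}$ is countable. Hence $\mathbb{M}_{\mathcal{U}}$ is ccc. Using the factorization of Lemma \ref{Mu iteration} together with the $\sigma$-distributivity of $(\mathcal{U},\leq^*)$ from Lemma \ref{sigma d} (which adds no new elements of $\mathcal{AR}^{\mathbb{N}}$ by Lemma \ref{ramsey u}), all the real-level action occurs in the ccc factor. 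Coding $\mathcal{R}\hookrightarrow\mathcal{AR}^{\mathbb{N}}$ as reals, I can then identify the set of $\mathbb{M}_{\mathcal{U}}$-generic branches landing in $\mathcal{X}$ with $f^{-1}(\mathcal{X}')$ for a continuous coding map $f$ on this ccc space and the set of reals $\mathcal{X}'\in L(\mathbb{R})$ coding $\mathcal{X}$. Lemma \ref{truth reals} then gives that this preimage has the Baire property.

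Finally I would convert this Baire property into the combinatorial $\mathcal{U}$-Baire property of $\mathcal{X}$. The bridge is the Prikry property (Theorem \ref{prikry prop}) together with the genericity characterization of Theorem \ref{forcing M_U}: membership of an $\mathbb{M}_{\mathcal{U}}$-generic in $\mathcal{X}$ is decided by a condition, and by $(L(\mathbb{R}),\mathbb{M}_{\mathcal{U}})$-correctness of the stationarily many countable elementary $M\prec H_\theta$ supplied by the unchangeability hypothesis, that decision agrees with genuine truth in $L(\mathbb{R})$. Working inside such an $M$ and using selectivity of $\mathcal{U}$ (through Lemma \ref{lemma captures} and Corollary \ref{Mu generic}) to diagonalize the countably many dense sets coded in $M$, one extracts from any $[a,A]$ with $A\in\mathcal{U}$ a subneighborhood $[b,B]$ with $B\in\mathcal{U}$ that is homogeneous for $\mathcal{X}$, which is exactly $\mathcal{U}$-Baireness. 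I expect this last step to be the main obstacle: the Baire property delivered by Lemma \ref{truth reals} lives in the ccc topological space of generic branches, whereas the conclusion we need is the purely combinatorial $\mathcal{U}$-Ramsey property of $\mathcal{X}$ as a subset of $\mathcal{R}$. Reconciling the two requires carefully aligning the correctness of the elementary submodels with the condition-by-condition decision of $\mathcal{X}$, and then using the selective diagonalization to pull the resulting homogeneity back into a single Ellentuck neighborhood based in $\mathcal{U}$.
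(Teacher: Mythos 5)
Your reduction is the right one and matches the paper's: it suffices to show every $\mathcal X\subseteq\mathcal R$ with $\mathcal X\in L(\mathbb R)$ is $\mathcal U$--Ramsey, the tools are Lemma \ref{truth}, Lemma \ref{truth reals} and Theorem \ref{baire-ramsey}, and your first paragraph (why a dense open $\mathcal D\in L(\mathbb R)$ cannot satisfy $[\emptyset,B]\cap\mathcal D=\emptyset$) is a correct elaboration of the paper's closing sentence. The genuine gap is exactly where you place it yourself: you never produce the ccc space $E$ and continuous map $f:E\to\mathbb R$ for which ``$f^{-1}(\mathcal X)$ has the Baire property'' literally \emph{is} the statement ``$\mathcal X$ is $\mathcal U$--Baire.'' Your candidate --- a space of $\mathbb M_{\mathcal U}$--generic branches, routed through the factorization of Lemma \ref{Mu iteration} --- leaves you with the transfer problem you acknowledge in the last paragraph, and the sketch offered there ($(L(\mathbb R),\mathbb M_{\mathcal U})$--correct submodels plus Lemma \ref{lemma captures}) is a restatement of what has to be proved, not a proof. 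As written the argument does not close.

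The missing idea, which is essentially the whole content of the paper's proof (following Todorcevic's argument for Theorem 4.9 of \cite{farah}), is that no auxiliary branch space is needed: take $E$ to be $\mathcal R$ \emph{itself}, topologized by $Exp(\mathcal U)=\{[a,A]:a\in\mathcal{AR},\ A\in\mathcal U\}$ (a topology because $\mathcal U$ is an ultrafilter). Then $E$ is ccc precisely because $\mathbb M_{\mathcal U}$ is ccc --- your $\sigma$-centeredness observation already gives this --- and by Theorem \ref{baire-ramsey} $E$ is a Baire space whose Baire-property subsets are exactly the $\mathcal U$--Ramsey sets. The map to feed into Lemma \ref{truth reals} is just the \emph{identity} $i:E\to\mathcal R$, with $\mathcal R$ carrying the separable metric topology inherited from $\mathcal{AR}^{\mathbb N}$ (so its points are coded as reals); $i$ is continuous because the $Exp(\mathcal U)$--topology refines the metric one. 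Lemma \ref{truth reals} then says $i^{-1}(\mathcal X)=\mathcal X$ has the Baire property in $E$, i.e.\ is $\mathcal U$--Ramsey, with no transfer step at all. Replacing your second and third paragraphs with this choice of $E$ and $f$ turns your outline into the paper's proof.
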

\begin{proof}
This is a generalization of Todorcevic's proof of the corresponding result for $\mathcal R = \mathbb N^{[\infty]}$ (see Theorem 4.9 of \cite{farah}). Let $\mathcal U\subseteq\mathcal R$ be a selective ultrafilter. Let $E$ be the topological space $\mathcal R$ with the topology generated by the family $Exp(\mathcal U)=\{[a,A] : a\in\mathcal{AR}, A\in\mathcal U\}$ (it is a topology because $\mathcal U$ is an ultrafilter). By Theorem \ref{baire-ramsey}, $E$ is a Baire space and the Baire subsets of $E$ are exactly the $\mathcal U$--Ramsey sets. $E$ is also a ccc space because the partial order $\mathbb M_{\mathcal U}$ is ccc.  The identity function $i: E\rightarrow \mathcal R$ is continuous, if we consider $\mathcal R$ with the metric separable topology inherited from $\mathcal{AR}^{\mathbb N}$. So by Lemma \ref{truth reals}, every set of reals $\mathcal X$ in $L(\mathbb R)$ is $\mathcal U$--Ramsey. In particular, if such $\mathcal X$ is dense in $(\mathcal R,\leq^*)$, then there exists $A\in\mathcal U$ such that $[\emptyset, A]\subseteq\mathcal X$. Therefore, $\mathcal U$ is $(\mathcal R,\leq^*)$--generic over $L(\mathbb R)$.
\end{proof}

\subsection{$\mathcal H$--Ramseyness of definable sets}

From Theorem \ref{complete comb} it is easy now to prove the following, which lifts Theorem 4.8 of \cite{farah} to the most general context. By definable sets we mean elements of $L(\mathbb R)$. The proof is very similar to that of Theorem 4.8 of \cite{farah} so we will leave the details to the reader.

\begin{thm}
If there exists a super compact cardinal and $\mathcal H\subseteq\mathcal R$ is a semiselective coideal, then all definable subsets of $\mathcal R$ are $\mathcal H$--Ramsey.
\end{thm}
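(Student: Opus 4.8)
The plan is to reduce the general statement to the selective, ultrafilter case already handled in Theorem \ref{complete comb}, exactly as Todorcevic does in the Ellentuck setting. The key observation is that a semiselective coideal $\mathcal H$ can be ``diagonalized'' by a generic selective ultrafilter: forcing with $(\mathcal H,\leq^*)$ is $\sigma$-distributive (Lemma \ref{sigma d}), adds no new reals, and yields a generic $\mathcal U\subseteq\mathcal H$ which, by Lemma \ref{ramsey u} and Lemma \ref{selective u}, is a selective ultrafilter in the extension $V[\mathcal U]$. Since no new reals (hence no new elements of $L(\mathbb R)$ coded by reals) are added, a definable set $\mathcal X\in L(\mathbb R)$ is the same set in $V$ and in $V[\mathcal U]$. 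The strategy is therefore: pass to $V[\mathcal U]$, apply the selective result there to get $\mathcal U$--Ramseyness of $\mathcal X$, and then pull this information back down to a statement about $\mathcal H$--Ramseyness in $V$.

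Concretely, I would proceed as follows. First fix a definable $\mathcal X\subseteq\mathcal R$ and a nonempty neighborhood $[a,A]$ with $A\in\mathcal H$; without loss of generality take $a=\emptyset$. Let $\mathcal U$ be $(\mathcal H,\leq^*)$--generic over $V$ with $A\in\mathcal U$ (possible since $\{B\in\mathcal H:B\leq A\}$ is dense below $A$). In $V[\mathcal U]$, Theorem \ref{complete comb} (which needs the supercompact and goes through because truth in $L(\mathbb R)$ is unchangeable by forcing, Lemma \ref{truth}, and Lemma \ref{truth reals}) gives that $\mathcal U$ is $(\mathcal R,\leq^*)$--generic over $L(\mathbb R)$; combined with the fact that the $\mathcal U$--Ramsey sets are precisely the $Exp(\mathcal U)$--Baire sets (Theorem \ref{baire-ramsey}) and that every set of reals in $L(\mathbb R)$ is $\mathcal U$--Ramsey, we obtain some $C\in\mathcal U$ with $[\emptyset,C]\subseteq\mathcal X$ or $[\emptyset,C]\subseteq\mathcal X^c$. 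The crucial point is that no reals were added, so this $C$ already lies in $V$, in fact $C\in\mathcal U\subseteq\mathcal H$, and the containments $[\emptyset,C]\subseteq\mathcal X$ (resp.\ $\subseteq\mathcal X^c$) are absolute between $V$ and $V[\mathcal U]$ because $\mathcal X$ and the Ellentuck neighborhoods are the same in both models. Thus $C\in[\emptyset,A]\cap\mathcal H$ witnesses the $\mathcal H$--Ramsey property at $[\emptyset,A]$.

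The main obstacle I expect is making the absoluteness argument precise: one must verify that deciding $[\emptyset,C]\subseteq\mathcal X$ is genuinely a statement not disturbed by the forcing. Since $(\mathcal H,\leq^*)$ adds no new elements of $\mathcal{AR}^{\mathbb N}$ (Lemma \ref{ramsey u}), the sets $\mathcal R$, $\mathcal H$, and every Ellentuck neighborhood $[\emptyset,C]$ are computed identically in $V$ and $V[\mathcal U]$; and because $\mathcal X\in L(\mathbb R)^{V}=L(\mathbb R)^{V[\mathcal U]}$ (no new reals), membership in $\mathcal X$ is absolute as well. One must also confirm that the selective witness $C$ produced in $V[\mathcal U]$ can be taken in $\mathcal U$ rather than merely in the extension, which is immediate from the formulation of Theorem \ref{complete comb} giving $[\emptyset,C]\subseteq\mathcal X$ with $C\in\mathcal U$. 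Finally, one checks the genericity condition is met densely below $A$ so that $\mathcal U$ may be chosen through $A$, completing the reduction. Since $[a,A]$ was arbitrary, $\mathcal X$ is $\mathcal H$--Ramsey.
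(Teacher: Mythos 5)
Your argument is correct and is precisely the route the paper intends: the paper omits the proof entirely, saying only that it is ``very similar to that of Theorem 4.8 of \cite{farah}'', and that argument is exactly your reduction --- force with the $\sigma$-distributive poset $(\mathcal H,\leq^*)$ through the condition $A$ to add a selective ultrafilter $\mathcal U\subseteq\mathcal H$ without adding new elements of $\mathcal{AR}^{\mathbb N}$, apply Theorem \ref{complete comb} (more precisely, the step in its proof showing every set in $L(\mathbb R)$ is $\mathcal U$--Ramsey, which survives to $V[\mathcal U]$ since the small forcing preserves the supercompact and $L(\mathbb R)$) to get a witness $C\in\mathcal U\subseteq\mathcal H$, and pull $C$ back to $V$ by distributivity. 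The absoluteness points you flag are the right ones and go through as you describe.
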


Nevertheless, in \cite{DMU}, it was proved that the supercompact cardinal is not needed for the case $\mathcal R=\mathbb N^{[\infty]}$. Namely:

\begin{thm}[Di Prisco, Mijares, Uzc\'ategui; \cite{DMU}]\label{dmu}
Suppose $\lambda$ is a weakly compact cardinal. Let $V[G]$ be a generic extension by $Col(\omega,<\lambda)$ such that $\mathbb N^{[\infty]}\in V[G]$. Then, if $\mathcal H\subseteq\mathbb N^{[\infty]}$ is a semiselective coideal in $V[G]$, every subset of $\mathbb N^{[\infty]}$ in $L(\mathbb R)$ of $V[G]$  is $\mathcal H$-Ramsey. 
\end{thm}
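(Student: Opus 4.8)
The plan is to prove the equivalent statement that every $\mathcal X\in L(\mathbb R)$ of $V[G]$ is $\mathcal H$--Baire; this suffices by Theorem \ref{baire-ramsey}, since $\mathcal H$ is semiselective in $V[G]$. Fix such an $\mathcal X$ and a nonempty $[a,A]$ with $A\in\mathcal H$. Working in $V[G]$, write $\mathcal X=\{X\in\mathbb N^{[\infty]} : L(\mathbb R)\models\varphi(X,r)\}$ for a formula $\varphi$ and a real parameter $r$ (ordinal parameters are absorbed in the standard way). Since $Col(\omega,<\lambda)$ adds each real at a bounded stage and $\lambda$ is inaccessible, there is $\alpha<\lambda$ with $r,a,A\in V[G_\alpha]$, where $G_\alpha=G\restriction Col(\omega,<\alpha)$. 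By the factorization $Col(\omega,<\lambda)\cong Col(\omega,<\alpha)\times Col(\omega,[\alpha,\lambda))$ and the homogeneity of the collapse, over $V[G_\alpha]$ the tail forcing is again a Levy collapse of $\lambda$, and $\lambda$ remains weakly compact in $V[G_\alpha]$ because the initial segment $Col(\omega,<\alpha)$ has size $<\lambda$.

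The heart of the argument is a weakly compact analogue of the absoluteness Lemmas \ref{truth} and \ref{truth reals}: for reals $X$ in $V[G]$, the truth of $\varphi(X,r)$ in $L(\mathbb R)^{V[G]}$ depends only on $X$ and $r$ and is, over $V[G_\alpha]$, decided by the tail collapse in a way that reflects to the bounded stages $Col(\omega,<\beta)$, $\beta<\lambda$. I would then use the tree property at $\lambda$ (weak compactness of $\lambda$ in $V[G_\alpha]$) to glue these bounded decisions into a single, forcing--invariant reading of membership in $\mathcal X$: organize the partial decisions as a $\lambda$--tree and extract a cofinal branch. This replaces the supercompactness input of Lemma \ref{truth}, and is precisely where weak compactness rather than mere inaccessibility is needed, since the $\mathcal H$--Ramsey conclusion must control all subneighborhoods simultaneously.

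With this uniform reading in hand, I would carry out the combinatorial diagonalization using the tools of Section \ref{semisele forcing}. By the Prikry property (Theorem \ref{prikry prop}) there is $B\in[a,A]\cap\mathcal H$ such that the condition $(a,B)$ decides the statement ``$\dot g\in\mathcal X$'' for the $\mathbb M_{\mathcal H}$--generic real $\dot g$. The Mathias property (Theorem \ref{mathias prop}) then guarantees that the reals lying in $[a,B]$ behave generically enough that the forcing--invariant reading of the previous step reads the decision of $(a,B)$ off each of them; consequently $[a,B]\subseteq\mathcal X$ or $[a,B]\subseteq\mathcal X^c$. Since $[a,A]$ was arbitrary, $\mathcal X$ is $\mathcal H$--Baire, and the reduction to a definable core can be streamlined using closure of the $\mathcal H$--Ramsey sets under countable unions and the Souslin operation (Lemma \ref{sigmaideal} and Theorem \ref{souslin}).

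The main obstacle is the absoluteness and coherence of the middle step: the coideal $\mathcal H$, and hence $\mathbb M_{\mathcal H}$, lives in $V[G]$ and need not belong to any $V[G_\alpha]$, so one cannot literally force with $\mathbb M_{\mathcal H}$ over $V[G_\alpha]$. The delicate point is therefore to show that the genericity of the reals $X\in[a,B]$ and the $L(\mathbb R)$--truth of $\varphi(X,r)$ depend only on $X$, $r$ and the collapse, and that the bounded--stage decisions cohere through the tree property into one condition $(a,B)$ deciding $\mathcal X$ uniformly below $\lambda$. Making this coherence precise, and checking that the semiselectivity of $\mathcal H$ in $V[G]$ interacts correctly with the reflected decisions, is where the real work lies.
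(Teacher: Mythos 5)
The paper does not prove Theorem \ref{dmu}: it is quoted verbatim from \cite{DMU} as a known result, and its generalization to arbitrary topological Ramsey spaces is likewise stated without proof. So there is no in-paper argument to compare yours against; the closest analogue is the supercompact argument of Theorem \ref{complete comb}, which runs through Lemmas \ref{truth} and \ref{truth reals} and the ccc space $Exp(\mathcal U)$ rather than through anything resembling your outline.

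Judged on its own terms, your proposal has a genuine gap at its center, one you partly acknowledge yourself. The entire content of the theorem is concentrated in your ``weakly compact analogue of the absoluteness Lemmas,'' and that step is not carried out: you never say what the $\lambda$-tree is, why its levels have size $<\lambda$, or why a cofinal branch yields a single forcing-invariant evaluation of $L(\mathbb R)\models\varphi(X,r)$ for all reals $X$ of $V[G]$ simultaneously. Without that, nothing replaces the correctness of countable submodels that the supercompact provides in Theorem \ref{complete comb}. The subsequent combinatorial step is also misapplied: the Mathias property (Theorem \ref{mathias prop}) says that reducts of an $\mathbb M_{\mathcal H}$-generic element remain generic, but the elements of $[a,B]$ lying in $V[G]$ are not $\mathbb M_{\mathcal H}$-generic over $V[G]$ --- no member of the ground model is --- so genericity must be arranged over some smaller model, and since $\mathcal H$ (hence $\mathbb M_{\mathcal H}$) need not belong to any $V[G_\alpha]$, the obvious candidate models are unavailable. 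You name this obstruction explicitly but do not resolve it; resolving it is the theorem. The actual argument of \cite{DMU} takes a different route, via a game-theoretic characterization of semiselectivity, rather than the tree-property gluing you propose. As written, the proposal is a plan with the decisive lemma left as a black box, not a proof.
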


Now we will use the results in Section \ref{M_U} to generalize Theorem \ref{dmu} to the context of any topological Ramsey space:

\begin{thm}
Let $\mathcal R$ be a topological Ramsey space. Suppose $\lambda$ is a weakly compact cardinal. Let $V[G]$ be a generic extension by $Col(\omega,<\lambda)$ such that $\mathcal R\in V[G]$. Then, if $\mathcal H\subseteq\mathcal R$ is a semiselective coideal in $V[G]$, every subset of $\mathcal R$ in $L(\mathbb R)$ of $V[G]$  is $\mathcal H$-Ramsey. 
\end{thm}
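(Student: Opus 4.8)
The plan is to follow the proof of Theorem \ref{dmu} line by line, replacing each appeal to a concrete feature of the Ellentuck space $\mathbb N^{[\infty]}$ by its abstract counterpart established in Section \ref{semisele forcing}. The whole argument then rests on the Prikry property (Theorem \ref{prikry prop}), the Mathias property (Theorem \ref{mathias prop}), the characterization of $\mathbb M_{\mathcal U}$--generic elements (Theorem \ref{forcing M_U} and Corollary \ref{Mu generic}), and the factorization $\mathbb M_{\mathcal H}\cong(\mathcal H,\leq^*)*\mathbb M_{\hat{\mathcal U}}$ (Lemma \ref{Mu iteration}); since all of these hold for an arbitrary topological Ramsey space, no special property of $\mathbb N^{[\infty]}$ is needed, and the content of the theorem is that the DMU argument is insensitive to the space once these abstract tools are available.

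First I would reduce to a density statement. Fix $\mathcal X\subseteq\mathcal R$ with $\mathcal X\in L(\mathbb R)^{V[G]}$; it suffices to show that the set of conditions $(a,B)\in\mathbb M_{\mathcal H}$ with $[a,B]\subseteq\mathcal X$ or $[a,B]\cap\mathcal X=\emptyset$ is dense, since this is precisely the $\mathcal H$--Ramsey property for $\mathcal X$. So fix $(a,A)$ with $A\in\mathcal H$. Because $\mathcal X$ is definable over $L(\mathbb R)$ from a real $r$ and ordinals, and $Col(\omega,<\lambda)$ is a finite--support product with the $\lambda$--chain condition while $\lambda$ is inaccessible, the parameter $r$ already appears in $W=V[G_\alpha]$ for some $\alpha<\lambda$, where $G_\alpha=G\cap Col(\omega,<\alpha)$. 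As $Col(\omega,<\alpha)$ has size $<\lambda$, the cardinal $\lambda$ remains weakly compact in $W$, and $V[G]=W[H]$ with $H$ generic over $W$ for the tail collapse, which is isomorphic to $Col(\omega,<\lambda)$ over $W$.

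Next I would apply the Prikry property. Working in $V[G]$, let $\dot g$ be the canonical $\mathbb M_{\mathcal H}$--name for the generic element of $\mathcal R$, and regard ``$\dot g\in\mathcal X$'' as a sentence of the forcing language with parameter $\mathcal X$. By Theorem \ref{prikry prop} there is $B\in[a,A]\cap\mathcal H$ such that $(a,B)$ decides this sentence; say, without loss of generality, $(a,B)\Vdash\dot g\in\mathcal X$. The remaining task is to upgrade this forcing decision to genuine membership, namely to prove $[a,B]\subseteq\mathcal X$ (the rejecting case being symmetric). Given $x\in[a,B]$, the decomposition of Lemma \ref{Mu iteration} lets us view $x$ as obtained by first adding a selective ultrafilter $\mathcal U\ni B$ and then a Mathias element below $B$; Theorem \ref{forcing M_U} characterizes the $\mathbb M_{\mathcal U}$--generic elements as exactly those $\leq^*$--below every member of $\mathcal U$, and Corollary \ref{Mu generic} (equivalently the Mathias property, Theorem \ref{mathias prop}) guarantees that this genericity is inherited by every $\leq$--smaller element. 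The point is therefore to exhibit, for each $x\in[a,B]$, an inner model $M$ --- a transitive $Col(\omega,<\lambda)$--style extension of $W$ of size $<\lambda$, or a countable elementary submodel in the spirit of the $(L(\mathbb R),\mathcal P)$--correct models of Theorem \ref{complete comb} --- which computes $\mathcal X\cap\mathcal R$ correctly and over which $x$ is $\mathbb M_{\mathcal H}$--generic with $(a,B)$ in its generic filter; the forcing theorem applied inside $M$ then yields $x\in\mathcal X$.

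The main obstacle is exactly this last step, the production of correct models over which the given elements of $[a,B]$ are generic, and this is where the hypothesis that $\lambda$ is \emph{weakly compact} (rather than merely inaccessible) is essential. Inaccessibility together with the Solovay factorization of the Levy collapse suffices for the Baire property, but pinning down the two--dimensional diagonalization underlying the $\mathcal H$--Ramsey property requires the tree property / $\Pi^1_1$--indescribability of $\lambda$ to furnish a branch through the tree of approximations to a homogeneous set, exactly as in the proof of Theorem \ref{dmu} for $\mathbb N^{[\infty]}$. I expect the verification that this combinatorial core of the DMU argument is insensitive to the particular space --- so that it runs unchanged once the abstract Prikry and Mathias properties of Section \ref{semisele forcing} are in hand --- to be the delicate but essentially routine part, while the genuinely load--bearing ingredient remains the weak compactness of $\lambda$.
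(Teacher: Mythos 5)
The paper states this theorem without proof (it is presented as a generalization of Theorem \ref{dmu}, with the reader implicitly referred to \cite{DMU}), so there is no in-paper argument to compare against; your proposal therefore has to stand on its own, and as written it does not. Your assembly of the abstract machinery from Section \ref{semisele forcing} is the right opening move, and two small points there are repairable: density in $\mathbb M_{\mathcal H}$ of the deciding conditions gives the $\mathcal H$--Baire property rather than the $\mathcal H$--Ramsey property (you need Theorem \ref{baire-ramsey} to close that gap, or else keep the stem $a$ fixed and use the Prikry property directly, as you in fact do in the next paragraph). The real problem is the step you yourself flag as ``the main obstacle'': passing from $(a,B)\Vdash \dot g\in\mathcal X$ to the genuine inclusion $[a,B]\subseteq\mathcal X$. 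You propose to exhibit, for each $x\in[a,B]$, a correct model $M$ over which $x$ is $\mathbb M_{\mathcal H}$--generic with $(a,B)$ in its filter. But an arbitrary $x\in[a,B]$ in $V[G]$ has no reason to be generic over anything useful, and --- more fundamentally --- $\mathbb M_{\mathcal H}$ is not even a forcing notion over a smaller model $M$ unless $\mathcal H\cap M$ belongs to $M$ and is a semiselective coideal there. Arranging this reflection of $\mathcal H$ to intermediate models $V[G_\alpha]$ is precisely the work that the weak compactness of $\lambda$ has to do, and it is the content of the DMU argument; it is not a routine transcription.

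Your closing paragraph, where weak compactness is invoked via ``the tree property / $\Pi^1_1$--indescribability ... to furnish a branch through the tree of approximations to a homogeneous set,'' does not describe a concrete argument and does not match how the hypothesis is actually used: what is needed is that for the given parameter and condition there are (stationarily many, or at least cofinally many) $\alpha<\lambda$ such that $\mathcal H\cap V[G_\alpha]$ is a semiselective coideal in $V[G_\alpha]$ containing the relevant conditions, so that the Prikry property (Theorem \ref{prikry prop}), the factorization of Lemma \ref{Mu iteration}, and the Mathias property (Theorem \ref{mathias prop}, via Theorem \ref{forcing M_U} and Corollary \ref{Mu generic}) can be applied \emph{inside} $V[G_\alpha]$, after which the homogeneity of the tail collapse and the absorption of the small forcing convert the forced statement about $\dot g$ into a statement about every member of $[a,B]$ whose relevant reducts are generic over $V[G_\alpha]$. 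Until that reflection step and the genericity of (sufficiently many) elements of $[a,B]$ over the intermediate model are actually proved, the argument is a plan rather than a proof; the abstract tools of Section \ref{semisele forcing} are necessary for it, but they are not by themselves sufficient.
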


\begin{coro}
Let $\mathcal R$ be a topological Ramsey space. If there is a weakly compact cardinal then for every semiselective coideal $\mathcal H\subseteq\mathcal R$ all definable subsets of $\mathcal R$ are $\mathcal H$-Ramsey.
\end{coro}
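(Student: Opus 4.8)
The plan is to transport the proof of Theorem \ref{dmu} from $\mathbb{N}^{[\infty]}$ to an arbitrary topological Ramsey space $\mathcal{R}$, substituting the abstract forcing machinery of Section \ref{M_U} for the concrete Mathias combinatorics used there, while the purely set-theoretic reflection and correctness arguments, which concern only reals and the collapse, transfer essentially unchanged. So first I would fix $\mathcal{X}\subseteq\mathcal{R}$ with $\mathcal{X}\in L(\mathbb{R})^{V[G]}$ and write $\mathcal{X}=\{X\in\mathcal{R}: L(\mathbb{R})\models\phi(X,s,\vec{\beta})\}$ for a real $s$ and ordinals $\vec{\beta}$. Since $\lambda$ is inaccessible, $Col(\omega,<\lambda)$ has the $\lambda$-chain condition and $\lambda=\omega_1^{V[G]}$, so every real of $V[G]$, in particular $s$, already appears in an intermediate extension $V[G_\xi]$ for some $\xi<\lambda$. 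As $Col(\omega,<\xi)$ has size $<\lambda$, weak compactness of $\lambda$ is preserved, and $V[G]=V[G_\xi][H]$ where $H$ is generic for the tail $Col(\omega,[\xi,\lambda))$ over $V[G_\xi]$, again a Levy collapse of the weakly compact $\lambda$ to $\omega_1$.

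Next I would fix a nonempty neighborhood $[a,A]$ with $A\in\mathcal{H}$ and aim to produce $B\in[a,A]\cap\mathcal{H}$ with $[a,B]\subseteq\mathcal{X}$ or $[a,B]\cap\mathcal{X}=\emptyset$, which is exactly the $\mathcal{H}$-Ramsey property. Working below $A$ in $\mathbb{M}_{\mathcal{H}}$, Lemma \ref{Mu iteration} factors this forcing as $(\mathcal{H},\leq^*)\ast\mathbb{M}_{\hat{\mathcal{U}}}$, so the generic object is the Mathias-type element $\dot{g}$ of $\mathcal{R}$ added over a $(\mathcal{H},\leq^*)$-generic selective ultrafilter $\mathcal{U}$, and $\dot{g}$ is forced into $[a,A]$. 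Let $\psi$ be the sentence of the forcing language asserting $L(\mathbb{R})\models\phi(\dot{g},s,\vec{\beta})$, i.e., ``$\dot{g}\in\mathcal{X}$''. The Prikry property (Theorem \ref{prikry prop}) then furnishes $B\in[a,A]\cap\mathcal{H}$ such that $(a,B)$ decides $\psi$.

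The decisive step is to convert this single forcing decision into a decision over the whole neighborhood. Here I would additionally arrange that $B$ be $\mathbb{M}_{\mathcal{U}}$-generic over a correct model $N$ computing $L(\mathbb{R})$-truth about $\dot{g}$, chosen as in the proof of Theorem \ref{dmu}; a Prikry refinement deciding $\psi$ stays generic by Corollary \ref{Mu generic}. By Corollary \ref{Mu generic} every $X\in[a,B]$, being $\leq B$, is again $\mathbb{M}_{\mathcal{U}}$-generic over $N$, and the generic filter it determines contains the condition $(a,B)$. Correctness of $N$ then makes ``$X\in\mathcal{X}$'' in $V[G]$ equivalent to ``$(a,B)\Vdash\psi$'', uniformly in $X$; hence $[a,B]\subseteq\mathcal{X}$ if $(a,B)\Vdash\psi$ and $[a,B]\cap\mathcal{X}=\emptyset$ if $(a,B)\Vdash\neg\psi$. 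Since $[a,A]$ was arbitrary, $\mathcal{X}$ is $\mathcal{H}$-Ramsey, and the stated theorem follows.

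I expect the main obstacle to be exactly this correctness step: reproducing, in the abstract setting, the DMU argument that a weakly compact cardinal suffices to decide the $L(\mathbb{R})$-truth of a definable statement about the generic real by a forcing condition, without recourse to the $(L(\mathbb{R}),\mathcal{P})$-correct models that demanded a supercompact (Lemmas \ref{truth} and \ref{truth reals}). The content there is that, over $V[G_\xi]$, the homogeneity of the collapse $Col(\omega,[\xi,\lambda))$ together with the tree property of $\lambda$ confines the relevant truth to an initial segment of the generic and makes it absolute; since this analysis speaks only of reals and the collapse, it is inherited from Theorem \ref{dmu}, and the genuinely new work is merely verifying that the abstract Prikry and Mathias properties of Section \ref{M_U}, together with the equivalence of Theorem \ref{baire-ramsey}, can stand in for the concrete Mathias combinatorics at every point where the proof of Theorem \ref{dmu} invoked them.
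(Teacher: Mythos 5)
The paper gives no proof of this corollary at all: it is stated immediately after the (also unproved) theorem generalizing Theorem \ref{dmu} to arbitrary $\mathcal R$, with only the remark that the results of Section \ref{M_U} are to be used. So your proposal can only be measured against the intended DMU-style argument. At the level of architecture you have reconstructed that argument correctly: factor $\mathbb M_{\mathcal H}$ as $(\mathcal H,\leq^*)*\mathbb M_{\hat{\mathcal U}}$ (Lemma \ref{Mu iteration}), use the Prikry property (Theorem \ref{prikry prop}) to get $(a,B)$ deciding ``$\dot g\in\mathcal X$'', and use the Mathias property (Corollary \ref{Mu generic}) to spread that single decision over every $X\in[a,B]$. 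Those are indeed the points where Section \ref{M_U} replaces the concrete Mathias combinatorics of \cite{DMU}.

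The genuine gap is in the step you flag and then dismiss. You assert that the weakly-compact part ``speaks only of reals and the collapse'' and is therefore ``inherited from Theorem \ref{dmu}''. It is not. In $V[G]$ one has $2^{\aleph_0}=\lambda$, so $\mathbb M_{\mathcal H}$ has size $\lambda$ and cannot be absorbed into the tail collapse $Col(\omega,[\xi,\lambda))$ as it stands; the essential use of weak compactness in \cite{DMU} is to reflect to some $\xi<\lambda$ at which the trace of $\mathcal H$ is still a semiselective coideal in $V[G_\xi]$, so that the now-small Mathias forcing embeds into the remaining collapse and homogeneity of the collapse decides the $L(\mathbb R)$-truth of $\phi(\dot g,s,\vec\beta)$ from a condition. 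That reflection is a statement about the coideal $\mathcal H\subseteq\mathcal R$ --- about dense open families indexed by $\mathcal{AR}\!\upharpoonright\! A$ and their diagonalizations --- not about reals and the collapse, and checking that abstract semiselectivity can be coded so that $\Pi^1_1$-indescribability of $\lambda$ (or the tree property) applies to it is precisely the new mathematical content of the theorem. Your proposal contains no argument for this, and without it the correctness/homogeneity step you rely on does not get off the ground. A secondary issue: what you sketch is a proof of the preceding theorem, which concerns $V[G]$, whereas the corollary is asserted of $V$ itself; neither you nor the paper explains that passage, and read literally (e.g.\ under $V=L$ plus a weakly compact cardinal, where $L(\mathbb R)=V$ and $\mathcal R$ itself is a semiselective coideal) the conclusion fails, so the corollary must be intended as a consistency statement --- your write-up should say so explicitly.
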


\end{document}